\documentclass[11pt]{amsart}

\usepackage[T1]{fontenc}
\usepackage[utf8]{inputenc}
\usepackage{lmodern}
\usepackage{microtype}
\usepackage{amsmath,amssymb,amsthm,mathtools}
\usepackage{enumitem}
\usepackage[colorlinks=true,linkcolor=blue,citecolor=blue,urlcolor=blue]{hyperref}

\allowdisplaybreaks
\newcommand{\R}{\mathbb{R}}
\newcommand{\N}{\mathbb{N}}
\newcommand{\E}{\mathbb{E}}

\newcommand{\Vol}{\operatorname{Vol}}
\newcommand{\conv}{\operatorname{conv}}
\newcommand{\clconv}{\operatorname{cl\,conv}}

\newcommand{\Ord}{\operatorname{Ord}}
\newcommand{\1}{\mathbf{1}}
\newcommand{\dd}{\,\mathrm{d}}
\newcommand{\seg}[1]{[0,#1]}
\newcommand{\absdet}[1]{\left|\det\left[#1\right]\right|}
\newcommand{\ip}[2]{\langle #1,#2\rangle}
\newcommand{\IV}{\mathcal V}
\newcommand{\J}{\mathcal J}
\newcommand{\ff}[2]{(#1)_{\underline{#2}}}

\numberwithin{equation}{section}

\newtheorem{theorem}{Theorem}[section]
\newtheorem{proposition}[theorem]{Proposition}
\newtheorem{corollary}[theorem]{Corollary}
\newtheorem{lemma}[theorem]{Lemma}
\newtheorem{claim}[theorem]{Claim}
\theoremstyle{remark}
\newtheorem{remark}[theorem]{Remark}
\newtheorem{example}[theorem]{Example}

\title[Mixed volumes of random hulls]{Expected mixed volumes of convex hulls of random walks and L\'evy processes}

\author[A. S. Bolotin]{Artyom S. Bolotin}
\address{St. Petersburg Department of the Steklov Mathematical Institute of the Russian Academy of Sciences, 27 Fontanka, St. Petersburg 191023, Russia}
\email{bolotin2003@yandex.ru}

\author[D. N. Zaporozhets]{Dmitry N. Zaporozhets}
\address{St. Petersburg Department of the Steklov Mathematical Institute of the Russian Academy of Sciences, 27 Fontanka, St. Petersburg 191023, Russia}
\email{zap1979@gmail.com}

\subjclass[2020]{Primary 60D05; Secondary 52A22, 52A39, 60G50, 60G51}
\keywords{Random walk, L\'evy process, convex hull, mixed volume, valuation, exchangeable increments, stable process, orthoscheme}
\date{September 2026}

\hypersetup{
  pdftitle={Expected mixed volumes of convex hulls of random walks and Levy processes},
  pdfauthor={Artem S. Bolotin and Dmitry N. Zaporozhets},
  pdfkeywords={random walk, Levy process, convex hull, mixed volume, valuation, orthoscheme}
}

\begin{document}

\begin{abstract}
Let $C_1,\ldots,C_k$ be the convex hulls of independent partial-sum processes in $\R^d$ whose increments are exchangeable within each process. We express the expected mixed volume $\E V_d(C_1[m_1],\ldots,C_k[m_k])$, $m_1+\cdots+m_k=d$, through mean absolute determinants of disjoint block sums of the increments; no general-position assumption is needed. For random walks with i.i.d.\ integrable increments the block sums are independent, and the formula extends the expected-volume formula of Barndorff-Nielsen and Baxter and of Vysotsky and Zaporozhets to mixed volumes. We then prove a continuous-time counterpart: for independent L\'evy processes with finite first moments, the expected mixed volume of the closed convex hulls of their paths is an explicit integral of mean absolute determinants over a product of simplices. For symmetric stable processes the integral is evaluated in terms of the associated zonoids. As a geometric application, we compute the mean mixed volume of random projections of mutually orthogonal canonical orthoschemes.
\end{abstract}

\maketitle

%======================================================================
\section{Introduction}\label{sec:intro}

The convex hull of the path of a random walk or of a stochastic process is a classical object at the interface of probability theory, stochastic geometry and convex geometry. In the plane, Spitzer and Widom \cite{SpitzerWidom} computed the expected perimeter of the convex hull of a random walk, and Baxter \cite{Baxter} gave a combinatorial proof of their formula; see \cite{MajumdarComtetRandonFurling} for a survey of planar results and their applications. Higher-dimensional versions were first considered by Barndorff-Nielsen and Baxter \cite{BarndorffBaxter}. Vysotsky and Zaporozhets \cite{VysotskyZaporozhets} expressed all expected intrinsic volumes of the convex hull of a random walk in $\R^d$ through Gram determinants of independent copies of the walk, and treated partial sums of exchangeable random vectors as well; see also \cite{KVZ-GAFA,KVZ-AdvMath} for absorption probabilities and the expected number of faces, and \cite{RandonFurlingZaporozhets} for convex hulls of several Gaussian random walks. For convex hulls of Brownian motions and L\'evy processes we refer to \cite{Eldan,KabluchkoZaporozhets,KampfLastMolchanov,MolchanovWespi,PitmanUribeBravo} and the references therein.

In particular, let $S_j=\xi_1+\cdots+\xi_j$ be a random walk in $\R^d$ with i.i.d.\ increments, and let
\[
 C_n=\conv(0,S_1,\ldots,S_n).
\]
Under a general-position assumption on the increments,
\begin{equation}\label{eq:known-volume}
 \E\Vol_d(C_n)
 =\frac1{d!}
 \sum_{\substack{j_1,\ldots,j_d\ge1\\j_1+\cdots+j_d\le n}}
 \frac{\E\absdet{S^{(1)}_{j_1},\ldots,S^{(d)}_{j_d}}}{j_1\cdots j_d},
\end{equation}
where $S^{(1)},\ldots,S^{(d)}$ are independent copies of the walk \cite[Corollary~2]{VysotskyZaporozhets}. As noted in \cite{VysotskyZaporozhets}, a version of this formula was first obtained by Barndorff-Nielsen and Baxter \cite{BarndorffBaxter}.

The purpose of the present paper is to develop a mixed-volume counterpart of \eqref{eq:known-volume}. Recall that the mixed volume $V_d(K_1,\ldots,K_d)$ of non-empty compact convex sets $K_1,\ldots,K_d\subset\R^d$ is the polarization of the volume with respect to Minkowski addition; see Section~\ref{sec:preliminaries}. Let $C_1,\ldots,C_k$ be the convex hulls of independent random walks in $\R^d$ with $n_1,\ldots,n_k$ steps and i.i.d.\ integrable increments, and let $m_1+\cdots+m_k=d$. Our first main result (Corollary~\ref{thm:discrete}) states that
\begin{equation}\label{eq:intro-discrete}
 \E V_d\bigl(C_1[m_1],\ldots,C_k[m_k]\bigr)
 =\frac1{d!}\sum
 \frac{\E\absdet{S^{(1,1)}_{j_{1,1}},\ldots,S^{(m_k,k)}_{j_{m_k,k}}}}
 {\prod_{i=1}^k\prod_{r=1}^{m_i}j_{r,i}},
\end{equation}
where $K[m]$ denotes $m$ copies of $K$, the sum runs over all positive integers $j_{r,i}$ such that $j_{1,i}+\cdots+j_{m_i,i}\le n_i$ for every $i$, and the columns of the determinant are the values at the times $j_{r,i}$ of mutually independent copies $S^{(r,i)}$ of the walks (the order of the columns is irrelevant). Formula \eqref{eq:intro-discrete} contains \eqref{eq:known-volume} as the case $k=1$, $m_1=d$, and it requires no general-position assumption. More generally, Theorem~\ref{thm:exchangeable} treats partial sums of exchangeable increments. In this case the independent copies in \eqref{eq:intro-discrete} must be replaced by the jointly distributed disjoint block sums of the increments, and Remark~\ref{rem:exchangeability} shows that this cannot be avoided.

The proof of the discrete results is combinatorial. Its geometric input is the classical staircase triangulation of a parallelotope into simplices indexed by permutations \cite{Freudenthal,Kuhn}. We determine the coefficients in the inclusion-exclusion formula over the chambers of this triangulation by counting intersections of chambers; this leads to recurrences and to identities for Stirling numbers (Lemma~\ref{lem:braid-valuation}). M\"obius inversion on the lattice of set partitions then yields a permutation-averaging identity valid for all valuations (Proposition~\ref{prop:permutation-valuation}), which is a valuation version of the classical combinatorial lemmas of Spitzer \cite{Spitzer}, Baxter \cite{Baxter} and Barndorff-Nielsen and Baxter \cite{BarndorffBaxter}. Applied to mixed-volume functionals and combined with exchangeability, it gives Theorem~\ref{thm:exchangeable} in general position, and a Gaussian lifting to a space of higher dimension removes this assumption. The discrete part of the paper is based on the first author's bachelor's thesis \cite{BolotinThesis}, written under the supervision of the second author.

Our second main result is a continuous-time version of \eqref{eq:intro-discrete}. Let $X_1,\ldots,X_k$ be independent L\'evy processes in $\R^d$ with finite first moments, and let
\[
 Z_i(T_i)=\clconv\{X_i(t):0\le t\le T_i\}.
\]
For an array $\mathbf t=(t_{r,i})$ let $\mathcal D_X(\mathbf t)$ be the absolute value of the determinant with columns $X_i^{(r)}(t_{r,i})$, $1\le r\le m_i$, $1\le i\le k$, where $X_i^{(r)}$ are mutually independent copies of $X_i$. Theorem~\ref{thm:continuous} states that
\begin{equation}\label{eq:intro-continuous}
 \E V_d\bigl(Z_1(T_1)[m_1],\ldots,Z_k(T_k)[m_k]\bigr)
 =\frac1{d!}
 \int_{\prod_{i=1}^k\Delta_{m_i}(T_i)}
 \frac{\E\mathcal D_X(\mathbf t)}
 {\prod_{i=1}^k\prod_{r=1}^{m_i}t_{r,i}}\,\dd\mathbf t,
\end{equation}
where
\[
 \Delta_m(T)=\{(t_1,\ldots,t_m)\in(0,\infty)^m:t_1+\cdots+t_m\le T\},
\]
and that the integral is finite. No symmetry, self-similarity or absolute continuity of the processes is required. The proof follows the approximation scheme of Molchanov and Wespi \cite{MolchanovWespi}, who computed the expected intrinsic volumes of the convex hull of a symmetric stable process by sampling the process at the times $i/n$ and applying the random-walk formulas of \cite{VysotskyZaporozhets}. For general L\'evy processes the resulting sums are not explicit, and we identify them as improper Riemann sums. The singularity at the origin is controlled by the elementary bound $\E\|X(t)\|\le c_T\sqrt t$, $0<t\le T$, which holds for every L\'evy process with finite first moment. For independent symmetric stable processes the integral in \eqref{eq:intro-continuous} is evaluated in terms of the associated zonoids (Corollary~\ref{cor:stable}); for a single process and $m_1=d$ this recovers the expected volume computed in \cite{MolchanovWespi}.

Finally, we apply the discrete theorem to canonical orthoschemes. Combining the Gaussian representation of intrinsic volumes due to Sudakov and Tsirelson \cite{Sudakov,Tsirelson} with Kubota's formula, we compute the mean mixed volume of the projections of mutually orthogonal canonical orthoschemes onto a random subspace (Theorem~\ref{thm:orthoscheme}). For a single orthoscheme this is equivalent to the formula of Gao and Vitale \cite{GaoVitale} for its intrinsic volumes; see also \cite{KabluchkoZaporozhets,VysotskyZaporozhets}.

The paper is organized as follows. Section~\ref{sec:preliminaries} collects the convex-geometric facts used throughout, and the main results are stated in Section~\ref{sec:main}. The permutation-averaging identity is proved in Section~\ref{sec:braid}, and the discrete theorems in Section~\ref{sec:discrete-proof}, where we also recover the known formulas for the expected intrinsic volumes of convex hulls of random walks (Corollary~\ref{cor:intrinsic}). Section~\ref{sec:continuous-proof} contains the proofs of the continuous-time results, and random projections of orthoschemes are treated in Section~\ref{sec:orthoschemes}.

%======================================================================
\section{Convex-geometric preliminaries}\label{sec:preliminaries}

\subsection{Mixed volumes and valuations}

Let $\mathcal K^d$ be the family of non-empty compact convex subsets of $\R^d$. We do not assume that these sets have interior points: mixed volumes and all their properties used below are defined on the whole family $\mathcal K^d$, whose elements are called convex bodies in \cite{Schneider}. This is essential for us, because many of the sets considered below, such as segments, parallelotopes spanned by fewer than $d$ vectors, and convex hulls of random walks with fewer than $d$ steps, have empty interior. By Minkowski's theorem \cite[Theorem~5.1.7]{Schneider}, for $K_1,\ldots,K_s\in\mathcal K^d$ and $\lambda_1,\ldots,\lambda_s\ge0$,
\begin{align}\label{eq:minkowski-polynomial}
 \Vol_d\left(\lambda_1K_1+\cdots+\lambda_sK_s\right)
 ={}&\sum_{\substack{r_1,\ldots,r_s\ge0\\r_1+\cdots+r_s=d}}
 \frac{d!}{r_1!\cdots r_s!}
 V_d\bigl(K_1[r_1],\ldots,K_s[r_s]\bigr)\\
 &\hspace{35mm}\times\lambda_1^{r_1}\cdots\lambda_s^{r_s}.\notag
\end{align}
Here $K[r]$ means that $K$ is repeated $r$ times, and arguments with multiplicity zero are omitted. The coefficient $V_d(K_1,\ldots,K_d)$ is the mixed volume, normalized so that $V_d(K[d])=\Vol_d(K)$. We shall use its standard properties \cite[Section~5.1]{Schneider}: symmetry, Minkowski multilinearity, translation invariance, monotonicity under inclusion in each argument, and continuity in the Hausdorff metric.

A real-valued function $\phi$ on the convex polytopes of a Euclidean space is called a valuation if
\begin{equation}\label{eq:valuation}
 \phi(A\cup B)+\phi(A\cap B)=\phi(A)+\phi(B)
\end{equation}
whenever $A$, $B$ and $A\cup B$ are convex polytopes; note that then $A\cap B\neq\varnothing$. If $L_1,\ldots,L_{d-m}\in\mathcal K^d$ are fixed, then
\begin{equation}\label{eq:diagonal-valuation}
 K\longmapsto V_d\bigl(L_1,\ldots,L_{d-m},K[m]\bigr)
\end{equation}
is a continuous translation-invariant valuation on $\mathcal K^d$; see, for example, \cite[Chapters~5 and~6]{Schneider}. By Volland's extension theorem \cite{Volland} (see also \cite{Groemer}, \cite[Chapter~6]{Schneider} and \cite{KlainRota}), every valuation on convex polytopes admits an additive extension to finite unions of convex polytopes. In particular, the inclusion-exclusion principle
\begin{equation}\label{eq:inclusion-exclusion}
 \phi\left(P_1\cup\cdots\cup P_N\right)
 =\sum_{\varnothing\neq I\subset\{1,\ldots,N\}}(-1)^{|I|+1}
 \phi\Bigl(\bigcap_{i\in I}P_i\Bigr)
\end{equation}
holds whenever $P_1,\ldots,P_N$ and their union are convex polytopes and all intersections on the right-hand side are non-empty. It will be applied to the chambers of a triangulation in Section~\ref{sec:braid}.

\subsection{Segments and projections}

For vectors $x_1,\ldots,x_d\in\R^d$,
\begin{equation}\label{eq:segments-determinant}
 V_d\bigl(\seg{x_1},\ldots,\seg{x_d}\bigr)
 =\frac1{d!}\absdet{x_1,\ldots,x_d}.
\end{equation}
For a linear subspace $E$ we denote by $K|E$ the orthogonal projection of $K$ onto $E$. If $e$ is a unit vector in $\R^r$ and $K_1,\ldots,K_{r-1}\in\mathcal K^r$, then
\begin{equation}\label{eq:segment-projection}
 r\,V_r\bigl(K_1,\ldots,K_{r-1},\seg e\bigr)
 =V_{r-1}\bigl(K_1|e^\perp,\ldots,K_{r-1}|e^\perp\bigr),
\end{equation}
where the right-hand side is the mixed volume in the $(r-1)$-dimensional space $e^\perp$; see, for example, \cite[Chapter~5]{Schneider}. Iterating \eqref{eq:segment-projection}, we obtain the following. If $E$ is a $d$-dimensional linear subspace of $\R^q$, $f_1,\ldots,f_{q-d}$ is an orthonormal basis of $E^\perp$ and $K_1,\ldots,K_d\in\mathcal K^q$, then
\begin{equation}\label{eq:iterated-projection}
 V_q\bigl(K_1,\ldots,K_d,
 \seg{f_1},\ldots,\seg{f_{q-d}}\bigr)
 =\frac{d!}{q!}V_d\bigl(K_1|E,\ldots,K_d|E\bigr).
\end{equation}

\subsection{Random segments and selection expectations}

A random compact convex set $K$ in $\R^d$ is called integrably bounded if $\E\max_{x\in K}\|x\|<\infty$. Its selection expectation $\E K$ is the set in $\mathcal K^d$ whose support function is $h(\E K,u)=\E h(K,u)$, $u\in\R^d$; see \cite{MolchanovRandomSets}. In particular, for an integrable random vector $\xi$,
\[
 h(\E\seg\xi,u)=\E\max\{0,\ip u\xi\},\qquad u\in\R^d.
\]
If $\xi_1,\ldots,\xi_d$ are independent integrable random vectors in $\R^d$, then
\begin{equation}\label{eq:random-segments}
 V_d\bigl(\E\seg{\xi_1},\ldots,\E\seg{\xi_d}\bigr)
 =\E V_d\bigl(\seg{\xi_1},\ldots,\seg{\xi_d}\bigr)
 =\frac1{d!}\E\absdet{\xi_1,\ldots,\xi_d}.
\end{equation}
This is a result of Vitale \cite{Vitale}; see also \cite[Theorem~2.1]{MolchanovWespi}. For the reader's convenience we recall the argument. For fixed $K_2,\ldots,K_d\in\mathcal K^d$,
\[
 V_d(K_1,K_2,\ldots,K_d)=\frac1d\int_{S^{d-1}}h(K_1,u)\,S(K_2,\ldots,K_d;\dd u),
\]
where $S(K_2,\ldots,K_d;\cdot)$ is the mixed area measure \cite[Section~5.1]{Schneider}. Since $0\le h(\seg{\xi_1},u)\le\|\xi_1\|$ for unit vectors $u$, Fubini's theorem gives $\E V_d(\seg{\xi_1},K_2,\ldots,K_d)=V_d(\E\seg{\xi_1},K_2,\ldots,K_d)$. Applying this to each argument in turn, conditionally on the remaining arguments and using independence, we obtain the first equality in \eqref{eq:random-segments}. The second equality follows from \eqref{eq:segments-determinant}.

\subsection{Stable processes and associated zonoids}

Let $X$ be a symmetric $\alpha$-stable L\'evy process in $\R^d$, $1<\alpha\le2$. There is a set $K\in\mathcal K^d$, called the associated zonoid of $X$, such that
\begin{equation}\label{eq:stable-zonoid}
 \E e^{i\ip u{X(t)}}=\exp\{-t\,h(K,u)^\alpha\},\qquad u\in\R^d,\ t\ge0;
\end{equation}
see \cite{KampfLastMolchanov,MolchanovStable}. Indeed, for every $u$ the random variable $\ip u{X(1)}$ is symmetric $\alpha$-stable, and a symmetric $\alpha$-stable random variable $Y$ with $\E e^{isY}=e^{-|s|^\alpha}$ satisfies $\E|Y|=2\Gamma(1-1/\alpha)/\pi$. Since $h(\E[-X(1),X(1)],u)=\E|\ip u{X(1)}|$, it follows that \eqref{eq:stable-zonoid} holds with the zonoid
\[
 K=\frac{\pi}{2\Gamma(1-1/\alpha)}\,\E[-X(1),X(1)].
\]
Moreover, $h(\E\seg{X(1)},u)=\E\max\{0,\ip u{X(1)}\}=\tfrac12\E|\ip u{X(1)}|$ by symmetry, and therefore
\begin{equation}\label{eq:stable-selection}
 \E\seg{X(1)}=\frac{\Gamma(1-1/\alpha)}{\pi}\,K.
\end{equation}

\subsection{Index sets}

Let $\N=\{1,2,\ldots\}$. For $n,m\in\N$ define
\begin{equation}\label{eq:Jnm}
 \J_m(n)=\left\{(j_1,\ldots,j_m)\in\N^m:j_1+\cdots+j_m\le n\right\},
\end{equation}
and put $\J_0(n)=\{\varnothing\}$. For $T>0$ define
\begin{equation}\label{eq:simplex}
 \Delta_m(T)=\left\{(t_1,\ldots,t_m)\in(0,\infty)^m:t_1+\cdots+t_m\le T\right\},
\end{equation}
and regard $\Delta_0(T)$ as a one-point space with integral equal to one.

%======================================================================
\section{Main results}\label{sec:main}

\subsection{Exchangeable increments and random walks}\label{ssec:discrete-results}

For $i=1,\ldots,k$, let
\[
 \boldsymbol\xi_i=(\xi_{1,i},\ldots,\xi_{n_i,i})
\]
be a finite exchangeable sequence of random vectors in $\R^d$; that is, $(\xi_{\sigma(1),i},\ldots,\xi_{\sigma(n_i),i})$ has the same distribution as $\boldsymbol\xi_i$ for every permutation $\sigma$ of $\{1,\ldots,n_i\}$. The $k$ sequences are assumed to be independent. Put
\[
 S_{j,i}=\xi_{1,i}+\cdots+\xi_{j,i},\qquad j=0,1,\ldots,n_i,
 \qquad
 C_i=\conv(0,S_{1,i},\ldots,S_{n_i,i}).
\]
For $\mathbf j_i=(j_{1,i},\ldots,j_{m_i,i})\in\J_{m_i}(n_i)$, write
\[
 a_{0,i}=0,\qquad a_{r,i}=j_{1,i}+\cdots+j_{r,i},
\]
and define the consecutive disjoint block sums
\begin{equation}\label{eq:block-sums}
 Y_{\mathbf j_i}^{(r,i)}
 =S_{a_{r,i},i}-S_{a_{r-1,i},i}
 =\sum_{\ell=a_{r-1,i}+1}^{a_{r,i}}\xi_{\ell,i},
 \qquad r=1,\ldots,m_i.
\end{equation}

\begin{theorem}[Exchangeable increments]\label{thm:exchangeable}
Let $m_1,\ldots,m_k$ be non-negative integers with $m_1+\cdots+m_k=d$. Then, with expectations understood in $[0,\infty]$,
\begin{align}\label{eq:exchangeable-main}
 &\E V_d\bigl(C_1[m_1],\ldots,C_k[m_k]\bigr)\\
 &\quad=\frac1{d!}
 \sum_{\substack{\mathbf j_i\in\J_{m_i}(n_i)\\i=1,\ldots,k}}
 \frac{
 \E\absdet{
 Y_{\mathbf j_1}^{(1,1)},\ldots,Y_{\mathbf j_1}^{(m_1,1)},
 \ldots,
 Y_{\mathbf j_k}^{(1,k)},\ldots,Y_{\mathbf j_k}^{(m_k,k)}
 }}
 {\displaystyle\prod_{i=1}^k\prod_{r=1}^{m_i}j_{r,i}}.\notag
\end{align}
No general-position assumption is required.
\end{theorem}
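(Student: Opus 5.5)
The plan is to reduce \eqref{eq:exchangeable-main} to a deterministic permutation-averaging identity for valuations, and then take expectations using exchangeability. Multilinearity turns the mixed volume of hulls into sums over cycle structures of permutations of the increments.

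\emph{Step 1: the deterministic identity.} Fix one sequence of vectors $x_1,\ldots,x_n$ in $\R^d$ and a continuous translation-invariant valuation $\phi$, for instance $K\mapsto V_d(L_1,\ldots,L_{d-m},K[m])$ from \eqref{eq:diagonal-valuation}. For $\sigma\in\Sigma_n$ let $C^\sigma=\conv(0,x_{\sigma(1)},x_{\sigma(1)}+x_{\sigma(2)},\ldots)$. I would prove that
\[
 \frac1{n!}\sum_{\sigma}\phi(C^\sigma)=\sum_{\sigma}\frac1{n!}\sum_{\text{cycles}}\ \cdots,
\]
that is, an identity of Spitzer--Baxter type. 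In it, the average of $\phi$ over the hulls of permuted walks equals an average, over ordered choices of disjoint consecutive blocks of lengths $j_1,\ldots,j_m$ and weights $1/(j_1\cdots j_m)$, of $\phi$ applied to Minkowski sums of segments $\seg{y_1}+\cdots+\seg{y_m}$ of block sums. The extra terms are of lower degree and vanish for the degree-$m$ homogeneous part.

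The geometric route is the staircase triangulation of the parallelotope $\sum_\ell\seg{x_\ell}$ into the simplices $C^\sigma$, assuming general position. Inclusion--exclusion \eqref{eq:inclusion-exclusion} over these chambers has coefficients given by counting intersections, and intersections of chambers correspond to ordered set partitions. Möbius inversion on the partition lattice then isolates the contribution of each block structure. Taking the homogeneous degree-$m$ component in the scaling $x\mapsto\lambda x$ and using \eqref{eq:segments-determinant} turns the parallelotope terms into determinants. This is the announced Proposition~\ref{prop:permutation-valuation}, obtained via Lemma~\ref{lem:braid-valuation}.

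\emph{Step 2: apply to several walks.} Apply Step 1 successively in each argument: first for $C_1$ with $L$'s being the other $C_i$, then for $C_2$, and so on. This is legitimate because, with the other sets fixed, each slot gives a valuation of the required form. By exchangeability and independence, $\E V_d(C_1[m_1],\ldots)$ equals the expectation of the permutation average. After iteration, the mixed volume is expressed as a sum over $\mathbf j_i\in\J_{m_i}(n_i)$ of $\prod(j_{r,i})^{-1}$ times $V_d$ of the segments $\seg{Y^{(r,i)}}$. Exchangeability also makes every choice of disjoint blocks of the given lengths equidistributed with consecutive blocks, so the blocks can be taken to be \eqref{eq:block-sums}. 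Then \eqref{eq:segments-determinant} gives the $\frac1{d!}|\det|$. Expectations in $[0,\infty]$ are harmless because all terms are nonnegative. Note that the block sums of one sequence remain jointly distributed, not independent, which is consistent with the statement.

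\emph{Step 3: remove general position.} Embed into $\R^{q}$, $q=d+N$, with $\tilde\xi_{\ell,i}=\xi_{\ell,i}+\varepsilon g_{\ell,i}$, where the $g_{\ell,i}$ are i.i.d.\ Gaussian vectors in $\R^q$. These are exchangeable within each sequence and almost surely in general position. Use \eqref{eq:iterated-projection}, padding with unit segments $\seg{f_1},\ldots,\seg{f_{q-d}}$, to express the $d$-dimensional mixed volume as a $q$-dimensional one. The padding segments can be treated as an extra deterministic walk, or as fixed arguments in the valuation. Apply the general-position result, then let $\varepsilon\to0$ using Hausdorff continuity and monotone/dominated convergence. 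For the determinants, the terms involving $f$ columns reduce to $d\times d$ minors by the block structure.

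\emph{Main obstacle.} The main difficulty is Step 1: computing the inclusion--exclusion coefficients over the staircase chambers and their Stirling-number identities, and checking that only the top-degree terms survive with weight $1/\prod j$. A secondary issue in Step 3 is the limit $\varepsilon\to0$ when expectations may be infinite. I would handle this by monotonicity: use a coupling in which the perturbed hulls contain the projections, or use Fatou's lemma in one direction and the upper bound in the other.
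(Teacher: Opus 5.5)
\textbf{Gap in Step 3: the limit $\varepsilon\to0$ cannot be taken.} Your Steps 1 and 2 follow the paper's route: Lemma~\ref{lem:braid-valuation}, Proposition~\ref{prop:permutation-valuation}, the multilinear expansion, the weight count of Lemma~\ref{lem:partition-weight}, exchangeability, and peeling off one hull at a time. The problem is Step 3.

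Your perturbation $\varepsilon g_{\ell,i}$ lives in all of $\R^q$, so it also moves the original $d$ coordinates. The projected hulls are then the hulls $C_i^\varepsilon$ of the walks with increments $\xi_{\ell,i}+\varepsilon\,g_{\ell,i}|E$. The block-triangular determinant likewise reduces to $\det[Y+\varepsilon G_E]$, not to $\det[Y]$. So you are forced to let $\varepsilon\to0$.

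In the generality of the theorem (no moment assumptions, values in $[0,\infty]$) this limit cannot be taken. Take $d=n=2$, $k=1$ and $\xi_\ell=Z_\ell e_1$ with $Z_1,Z_2$ i.i.d.\ standard Cauchy. Then $C$ is a segment and $\det[\xi_1,\xi_2]=0$, so both sides of \eqref{eq:exchangeable-main} vanish. But with independent standard Gaussian $a,b\in\R^2$,
\[
 \det[\xi_1+\varepsilon a,\ \xi_2+\varepsilon b]=\varepsilon\,(Z_1b_2-Z_2a_2)+\varepsilon^2\det[a,b]
\]
has infinite mean absolute value for every $\varepsilon>0$. Hence so does $\Vol_2(C^\varepsilon)$, and the perturbed identity reads $\infty=\infty$ for all $\varepsilon>0$.

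Your proposed fallbacks do not help. Fatou only gives lower bounds on both sides by the same perturbed quantity. A containment coupling gives at most one inequality, and here the larger side is infinite. Even under moment assumptions, dominated convergence would need integrability of mixed products such as $\E\|\xi_{1}\|$, which exchangeability with $\E(\|\xi_1\|\cdots\|\xi_m\|)<\infty$ does not provide.

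\textbf{How to fix it.} Perturb only in the new directions, as in Section~\ref{ssec:lifting}. Set $\tilde\xi_{\ell,i}=(\xi_{\ell,i},g_{\ell,i})\in\R^d\times\R^N$ with $g_{\ell,i}$ standard Gaussian in $\R^N$ and $N\ge\max_i n_i$. The staircase argument needs the $n_i$ increments of each sequence to be linearly independent, not merely in general position in $\R^d$.

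\begin{itemize}
\item Then $\tilde C_i|E=C_i$ exactly, so \eqref{eq:iterated-projection} turns the $q$-dimensional left side into exactly $\frac{d!}{q!}\,\E V_d\bigl(C_1[m_1],\ldots,C_k[m_k]\bigr)$.
\item The matrix $\bigl[\tilde Y\ \ f_1\cdots f_N\bigr]$ is block lower-triangular with identity lower-right block, so its determinant is exactly $\det[Y]$.
\end{itemize}

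The identity therefore transfers with no parameter, no limit and no integrability issue.

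A minor point in Step 1: the terms that drop out of the multilinear expansion of $\phi(P_\rho)$ are those with a repeated block, which vanish because two segments coincide. They are not lower-degree terms.
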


If $m_i>n_i$ for some $i$, then both sides of \eqref{eq:exchangeable-main} vanish: the sum is empty, and $C_i$, being the convex hull of $n_i+1$ points, has dimension less than $m_i$. Both sides of \eqref{eq:exchangeable-main} may be infinite; a simple sufficient condition for their finiteness is given in \eqref{eq:finiteness} below.

For $k=1$ and $m_1=d$, Theorem~\ref{thm:exchangeable} is an expected-volume formula for partial sums of exchangeable random vectors. Under a general-position assumption, this case can also be derived from \cite[Theorem~4]{VysotskyZaporozhets}, which is formulated for exchangeable increments, by means of the lifting argument used in the proof of \cite[Corollary~2]{VysotskyZaporozhets}.

For ordinary random walks the block sums in \eqref{eq:block-sums} are independent.

\begin{corollary}[Independent-copy formula]\label{thm:discrete}
Suppose, in addition, that for each $i$ the increments $\xi_{1,i},\ldots,\xi_{n_i,i}$ are i.i.d.\ and integrable. For every $i$ and $r\ge1$, let $S^{(r,i)}$ be a copy of the $i$-th walk, all copies being mutually independent. Then all quantities below are finite and
\begin{align}\label{eq:discrete-main}
 &\E V_d\bigl(C_1[m_1],\ldots,C_k[m_k]\bigr)\\
 &\quad=\frac1{d!}
 \sum_{\substack{\mathbf j_i\in\J_{m_i}(n_i)\\i=1,\ldots,k}}
 \frac{
 \E\absdet{
 S^{(1,1)}_{j_{1,1}},\ldots,S^{(m_1,1)}_{j_{m_1,1}},
 \ldots,
 S^{(1,k)}_{j_{1,k}},\ldots,S^{(m_k,k)}_{j_{m_k,k}}
 }}
 {\displaystyle\prod_{i=1}^k\prod_{r=1}^{m_i}j_{r,i}}.\notag
\end{align}
\end{corollary}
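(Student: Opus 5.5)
We derive Corollary~\ref{thm:discrete} from Theorem~\ref{thm:exchangeable}, which we take as given. First, an i.i.d.\ sequence $\xi_{1,i},\ldots,\xi_{n_i,i}$ is exchangeable, and the $k$ sequences are independent, so Theorem~\ref{thm:exchangeable} applies directly. It expresses $\E V_d(C_1[m_1],\ldots,C_k[m_k])$ as the sum \eqref{eq:exchangeable-main} of mean absolute determinants of the block sums $Y^{(r,i)}_{\mathbf j_i}$. It therefore suffices to show two things: for each fixed array $(\mathbf j_1,\ldots,\mathbf j_k)$ the determinant expectations in \eqref{eq:exchangeable-main} and \eqref{eq:discrete-main} coincide, and everything is finite.

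For the identification, fix $i$ and $\mathbf j_i\in\J_{m_i}(n_i)$. The blocks $\{a_{r-1,i}+1,\ldots,a_{r,i}\}$, $r=1,\ldots,m_i$, are disjoint subsets of $\{1,\ldots,n_i\}$ of sizes $j_{r,i}$. Since the increments are i.i.d., the block sums $Y^{(1,i)}_{\mathbf j_i},\ldots,Y^{(m_i,i)}_{\mathbf j_i}$ are independent, and $Y^{(r,i)}_{\mathbf j_i}$ is a sum of $j_{r,i}$ i.i.d.\ copies of $\xi_{1,i}$. Hence $Y^{(r,i)}_{\mathbf j_i}$ has the law of $S_{j_{r,i},i}$. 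Independence across different $i$ comes from the independence of the sequences. So the $d$-tuple of columns $(Y^{(r,i)}_{\mathbf j_i})_{r,i}$ has the same joint law as $(S^{(r,i)}_{j_{r,i}})_{r,i}$ with mutually independent copies. Since $|\det|$ is a measurable function of the columns, the expectations agree term by term, and \eqref{eq:discrete-main} follows as an identity in $[0,\infty]$.

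For finiteness, the sum is finite, so we only need each term to be finite. The columns are independent, and Hadamard's inequality gives
\[
\E\absdet{S^{(1,1)}_{j_{1,1}},\ldots,S^{(m_k,k)}_{j_{m_k,k}}}\le\prod_{i,r}\E\|S_{j_{r,i},i}\|\le\prod_{i,r}j_{r,i}\,\E\|\xi_{1,i}\|<\infty.
\]
Alternatively, the left side is bounded by monotonicity of mixed volumes, since $C_i\subset$ a ball of radius $\sum_\ell\|\xi_{\ell,i}\|$, and independence then gives finiteness. This is presumably the content of the bound \eqref{eq:finiteness}.

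There is no real obstacle once Theorem~\ref{thm:exchangeable} is available. The only point requiring care is the joint distributional identity: disjointness of the blocks is what yields independence within a walk. Remark~\ref{rem:exchangeability} indicates this identification fails under mere exchangeability.
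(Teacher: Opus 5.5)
Your main argument is correct and is exactly the paper's proof: for i.i.d.\ increments the disjoint block sums are independent with $Y^{(r,i)}_{\mathbf j_i}\stackrel d=S_{j_{r,i},i}$, so Theorem~\ref{thm:exchangeable} gives \eqref{eq:discrete-main} term by term, and Hadamard's inequality makes each of the finitely many terms, and hence the left side, finite. Your alternative aside, however, fails under mere integrability, and it is not what \eqref{eq:finiteness} does: bounding $C_i$ by a ball of radius $\sum_\ell\|\xi_{\ell,i}\|$ requires finiteness of $\E(\sum_\ell\|\xi_{\ell,i}\|)^{m_i}$, i.e.\ $m_i$-th moments, whereas the paper uses the zonotope $\sum_\ell\seg{\xi_{\ell,i}}$, whose multilinear expansion drops terms with a repeated increment, so only products of norms of distinct (hence independent) increments appear.
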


For $k=1$ and $m_1=d$, Corollary~\ref{thm:discrete} is the expected-volume formula \eqref{eq:known-volume}, now without any general-position assumption.

\begin{remark}[Exchangeability versus independent copies]\label{rem:exchangeability}
The distinction between Theorem~\ref{thm:exchangeable} and Corollary~\ref{thm:discrete} is essential. Let $n=d=2$ and let $(\xi_1,\xi_2)$ be equally likely to be $(e_1,e_2)$ or $(e_2,e_1)$. The increments are exchangeable and linearly independent almost surely, while
\[
 \E\Vol_2\bigl(\conv(0,\xi_1,\xi_1+\xi_2)\bigr)=\frac12.
\]
If $\xi_1^{(1)},\xi_1^{(2)}$ are independent copies of the common marginal, then
\[
 \frac12\E\absdet{\xi_1^{(1)},\xi_1^{(2)}}=\frac14.
\]
Thus, under mere exchangeability, the jointly distributed block sums in \eqref{eq:exchangeable-main} cannot in general be replaced by independent copies.
\end{remark}

Corollary~\ref{thm:discrete} and \eqref{eq:minkowski-polynomial} give the expected volume of a Minkowski sum of random-walk hulls.

\begin{corollary}\label{cor:discrete-minkowski}
Under the assumptions of Corollary~\ref{thm:discrete}, for $\lambda_1,\ldots,\lambda_k\ge0$,
\begin{align}\label{eq:discrete-minkowski}
 \E\Vol_d\left(\lambda_1C_1+\cdots+\lambda_kC_k\right)
 ={}&\sum_{\substack{m_1,\ldots,m_k\ge0\\m_1+\cdots+m_k=d}}
 \frac{\lambda_1^{m_1}\cdots\lambda_k^{m_k}}{m_1!\cdots m_k!}\\
 &\times
 \sum_{\substack{\mathbf j_i\in\J_{m_i}(n_i)\\i=1,\ldots,k}}
 \frac{\E\absdet{S^{(1,1)}_{j_{1,1}},\ldots,S^{(m_k,k)}_{j_{m_k,k}}}}
 {\displaystyle\prod_{i=1}^k\prod_{r=1}^{m_i}j_{r,i}}.\notag
\end{align}
\end{corollary}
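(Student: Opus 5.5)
This follows directly from Minkowski's polynomial expansion \eqref{eq:minkowski-polynomial} combined with Corollary~\ref{thm:discrete}. For each realization of the walks, the hulls $C_1,\ldots,C_k$ are non-empty compact convex sets. Applying \eqref{eq:minkowski-polynomial} with $s=k$ gives
\[
 \Vol_d(\lambda_1C_1+\cdots+\lambda_kC_k)=\sum_{m_1+\cdots+m_k=d}\frac{d!}{m_1!\cdots m_k!}\,V_d\bigl(C_1[m_1],\ldots,C_k[m_k]\bigr)\,\lambda_1^{m_1}\cdots\lambda_k^{m_k}.
\]
This is a finite sum of non-negative terms, since mixed volumes are non-negative by monotonicity and $\lambda_i\ge0$. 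We take expectations term by term. Linearity of expectation applies in $[0,\infty]$ without any integrability issue, and in any case each term is finite by Corollary~\ref{thm:discrete}.

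Next we substitute the value of each $\E V_d(C_1[m_1],\ldots,C_k[m_k])$ from \eqref{eq:discrete-main}. The prefactor $1/d!$ there cancels the multinomial factor $d!$, leaving $\lambda_1^{m_1}\cdots\lambda_k^{m_k}/(m_1!\cdots m_k!)$ multiplied by the inner sum over $\mathbf j_i\in\J_{m_i}(n_i)$. This is exactly \eqref{eq:discrete-minkowski}.

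Two edge cases need a remark. First, terms with $m_i=0$ are handled by the convention that omitted arguments correspond to $\J_0(n_i)=\{\varnothing\}$ and to empty products equal to one. Second, terms with $m_i>n_i$ vanish on both sides, as noted after Theorem~\ref{thm:exchangeable}.

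There is no real obstacle here. The only points to check are the exact matching of the normalizations, namely the $d!/(r_1!\cdots r_s!)$ in \eqref{eq:minkowski-polynomial} against the $1/d!$ in \eqref{eq:discrete-main}, and the validity of taking expectations inside the finite sum, which is justified by non-negativity.
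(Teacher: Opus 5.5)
Your proposal is correct and is essentially the paper's own argument: the paper derives this corollary in one line from Corollary~\ref{thm:discrete} and \eqref{eq:minkowski-polynomial}. As you describe, one expands pathwise, takes expectations of the finitely many non-negative terms, and the multinomial factor $d!/(m_1!\cdots m_k!)$ meets the prefactor $1/d!$ of \eqref{eq:discrete-main}.
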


\subsection{Independent L\'evy processes}

Let $X_i=(X_i(t))_{t\ge0}$, $i=1,\ldots,k$, be independent L\'evy processes in $\R^d$, always taken in their c\`adl\`ag versions. Assume
\begin{equation}\label{eq:levy-first-moment}
 \E\|X_i(1)\|<\infty,\qquad i=1,\ldots,k.
\end{equation}
For $T_i>0$ put
\begin{equation}\label{eq:levy-hulls}
 Z_i(T_i)=\clconv\{X_i(t):0\le t\le T_i\}.
\end{equation}
For every $i$ and $r\ge1$, let $X_i^{(r)}$ be a copy of $X_i$, all copies being mutually independent. For an array of times $\mathbf t=(t_{r,i})$, put
\begin{equation}\label{eq:det-process-array}
 \mathcal D_X(\mathbf t)
 =\left|\det\left[ X_i^{(r)}(t_{r,i})\right]_{
 \substack{1\le i\le k\\1\le r\le m_i}}\right|,
\end{equation}
where the columns are ordered lexicographically in $(i,r)$.

\begin{theorem}[Continuous-time mixed-volume formula]\label{thm:continuous}
Let $m_1,\ldots,m_k$ be non-negative integers with $m_1+\cdots+m_k=d$. Under \eqref{eq:levy-first-moment},
\begin{align}\label{eq:continuous-main}
 &\E V_d\bigl(Z_1(T_1)[m_1],\ldots,Z_k(T_k)[m_k]\bigr)\\
 &\quad=\frac1{d!}
 \int_{\prod_{i=1}^k\Delta_{m_i}(T_i)}
 \frac{\E\mathcal D_X(\mathbf t)}
 {\displaystyle\prod_{i=1}^k\prod_{r=1}^{m_i}t_{r,i}}
 \,\dd\mathbf t.\notag
\end{align}
The integral is finite.
\end{theorem}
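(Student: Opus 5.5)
We discretize in time and apply Corollary~\ref{thm:discrete}, following the Molchanov--Wespi approximation scheme. Fix $n\in\N$ and, for each $i$, sample the $i$-th process at the times $\ell T_i/n$, $\ell=0,\ldots,n$. This gives a random walk with $n$ i.i.d.\ integrable increments distributed as $X_i(T_i/n)$. Let $C_i^{(n)}$ be its convex hull. Corollary~\ref{thm:discrete} gives
\[
 \E V_d\bigl(C_1^{(n)}[m_1],\ldots,C_k^{(n)}[m_k]\bigr)
 =\frac1{d!}\sum_{\mathbf j_i\in\J_{m_i}(n)}
 \frac{\E\mathcal D_X\bigl((j_{r,i}T_i/n)\bigr)}{\prod_{i,r}j_{r,i}}.
\]
Writing $1/j_{r,i}=(T_i/n)/t_{r,i}$ with $t_{r,i}=j_{r,i}T_i/n$ identifies the right-hand side as a Riemann sum, with cells of volume $\prod_i(T_i/n)^{m_i}$, for the integral in \eqref{eq:continuous-main} over $\prod_i\Delta_{m_i}(T_i)$. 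The sum uses the corner points of the cells that meet the simplices.

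\emph{Left-hand side.} Take $n=2^p$, so that the sampling sets increase and $C_i^{(2^p)}\uparrow$. Since the paths are càdlàg, the union of the dyadic samples is dense in $[0,T_i]$ up to left limits. Hence $C_i^{(2^p)}$ converges in the Hausdorff metric to $Z_i(T_i)$: the closure of the convex hull of the dyadic values contains all $X_i(t)$ by right-continuity, and the right endpoint $T_i$ is sampled. By continuity and monotonicity of mixed volumes, the mixed volumes increase to $V_d(Z_1(T_1)[m_1],\ldots)$. Monotone convergence then gives convergence of the expectations in $[0,\infty]$.

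\emph{Right-hand side.} We show that the integrand
\[
 f(\mathbf t)=\E\mathcal D_X(\mathbf t)\Big/\prod t_{r,i}
\]
is continuous on the open product of simplices and is dominated near the singular faces. Continuity holds because Lévy processes are stochastically continuous and $\E\|X_i(t)\|$ is locally bounded and continuous. Hence $t\mapsto X_i^{(r)}(t)$ is continuous in $L^1$, and by independence of the columns the expected absolute determinant is continuous. For domination, expand the determinant and use independence (Hadamard's inequality) to get $\E\mathcal D_X(\mathbf t)\le\prod_{i,r}\E\|X_i(t_{r,i})\|$. The key lemma is then
\[
 \E\|X(t)\|\le c_T\sqrt t,\qquad 0<t\le T.
\]
To prove it, split $X$ by the Lévy--Itô decomposition into drift, Brownian part, compensated small jumps (with $L^2$ norm of order $\sqrt t$), and large jumps with finite first moment. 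The large jumps give a compound Poisson part with mean norm at most $t\,\E\|{\rm jump}\|\cdot\lambda\le ct$. Alternatively, one can argue by subadditivity combined with centering. This yields
\[
 f(\mathbf t)\le c\prod_{i,r}t_{r,i}^{-1/2},
\]
which is integrable on a box, so the integral is finite.

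\emph{Passing to the limit.} The Riemann sums over the corner points are dominated by the same function $\prod t^{-1/2}$ evaluated at the lattice points. For $t\ge T/n$ one has $t^{-1/2}\le\sqrt2\,s^{-1/2}$ on the cell $[t-T/n,t]$, which is a monotone comparison. So a dominated-convergence argument for improper Riemann sums applies. First approximate on $\{t_{r,i}\ge\varepsilon\}$, where $f$ is uniformly continuous, after handling the boundary $\sum_r t_{r,i}\le T_i$, which has measure zero. Then control the remainder uniformly in $n$ by $c\sqrt\varepsilon$. Equating the two limits proves \eqref{eq:continuous-main}.

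The main obstacle I expect is the bound $\E\|X(t)\|\le c_T\sqrt t$ together with this uniform control of the improper Riemann sums near the coordinate hyperplanes. The Hausdorff convergence for càdlàg paths is routine by comparison.
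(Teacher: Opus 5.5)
Your proposal is correct and follows essentially the same route as the paper. On the left you use dyadic sampling, Corollary~\ref{thm:discrete} and monotone convergence of the increasing hulls in the Hausdorff metric. On the right you use the L\'evy--It\^o bound $\E\|X(t)\|\le c_T\sqrt t$, Hadamard domination of the integrand by $c\prod t_{r,i}^{-1/2}$, $L^1$-continuity of the columns, and an improper Riemann-sum argument with upper-corner evaluation and a uniform $O(\sqrt\delta)$ tail. Your aside that subadditivity plus centering would also give the $\sqrt t$ rate is unnecessary and unsupported, since subadditivity by itself only bounds $\E\|X(t)\|$ for large $t$; but your L\'evy--It\^o argument is what the proof actually needs, and it suffices.
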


For $k=1$ and $m_1=d$, Theorem~\ref{thm:continuous} expresses the expected volume of the convex hull of a single L\'evy process,
\[
 \E\Vol_d\bigl(Z(T)\bigr)=\frac1{d!}\int_{\Delta_d(T)}
 \frac{\E\absdet{X^{(1)}(t_1),\ldots,X^{(d)}(t_d)}}{t_1\cdots t_d}\,\dd t_1\cdots\dd t_d,
\]
which is the continuous-time analogue of \eqref{eq:known-volume}. Note that the finiteness assertion of Theorem~\ref{thm:continuous} requires only the first-moment assumption \eqref{eq:levy-first-moment}; compare \cite[Theorem~1.1]{MolchanovWespi}.

There is a useful version with independent exponential horizons. Let $\tau_i$ be exponentially distributed with parameter $q_i>0$, independently of all processes and of one another.

\begin{corollary}[Independent exponential horizons]\label{cor:exponential}
Under the assumptions of Theorem~\ref{thm:continuous},
\begin{align}\label{eq:exponential-main}
 &\E V_d\bigl(Z_1(\tau_1)[m_1],\ldots,Z_k(\tau_k)[m_k]\bigr)\\
 &\quad=\frac1{d!}\int_{(0,\infty)^d}
 \exp\left\{-\sum_{i=1}^k q_i\sum_{r=1}^{m_i}t_{r,i}\right\}
 \frac{\E\mathcal D_X(\mathbf t)}
 {\displaystyle\prod_{i=1}^k\prod_{r=1}^{m_i}t_{r,i}}
 \,\dd\mathbf t,\notag
\end{align}
and the right-hand side is finite.
\end{corollary}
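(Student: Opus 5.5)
We prove Corollary~\ref{cor:exponential} by conditioning on the horizons and then applying Theorem~\ref{thm:continuous} pathwise in the horizon variables. The exponential times are independent of the processes, so for fixed $(T_1,\ldots,T_k)$ the conditional expectation of $V_d(Z_1(\tau_1)[m_1],\ldots,Z_k(\tau_k)[m_k])$ given $\tau_i=T_i$ equals the left-hand side of \eqref{eq:continuous-main}. Since mixed volumes are non-negative, Tonelli's theorem then gives
\[
 \E V_d\bigl(Z_1(\tau_1)[m_1],\ldots,Z_k(\tau_k)[m_k]\bigr)
 =\int_{(0,\infty)^k}\prod_{i=1}^k q_ie^{-q_iT_i}\,
 F(T_1,\ldots,T_k)\,\dd T,
\]
where $F(T_1,\ldots,T_k)$ denotes the right-hand side of \eqref{eq:continuous-main}. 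Two measurability points need a remark. First, the map $(T_1,\ldots,T_k)\mapsto \E V_d(\cdots)$ is measurable; indeed it is monotone non-decreasing in each $T_i$, because the hulls increase and mixed volumes are monotone. Second, conditioning on $\tau$ is legitimate by independence, via Fubini on the product probability space.

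Next we insert the integral representation of $F$ and interchange the order of integration, again by Tonelli, since the integrand $\E\mathcal D_X(\mathbf t)/\prod t_{r,i}$ is non-negative. For fixed $\mathbf t$, let $s_i=\sum_{r=1}^{m_i}t_{r,i}$. The constraint $\mathbf t\in\prod_i\Delta_{m_i}(T_i)$ is equivalent to $T_i\ge s_i$ for every $i$. The integrals over the horizons then factor:
\[
 \prod_{i=1}^k\int_{s_i}^\infty q_ie^{-q_iT_i}\,\dd T_i=\exp\Bigl\{-\sum_i q_is_i\Bigr\}.
\]
When $m_i=0$, the factor $\Delta_0(T_i)$ is a point, the corresponding $s_i$ is $0$, and the factor equals $1$. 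This reproduces \eqref{eq:exponential-main} with integration over $(0,\infty)^d$.

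The remaining and main point is finiteness of the right-hand side. Theorem~\ref{thm:continuous} gives finiteness of $F(T)$ for each $T$, but this alone does not prevent $\int\prod q_ie^{-q_iT_i}F(T)\,\dd T$ from diverging. We therefore need a growth bound on $F$, which we extract from the bound used to prove the theorem. Hadamard's inequality and independence of the columns give
\[
 \E\mathcal D_X(\mathbf t)\le\prod_{i,r}\E\|X_i(t_{r,i})\|.
\]
Each factor satisfies $\E\|X_i(t)\|\le \E\|X_i(t)-t\mu_i\|+t\|\mu_i\|$, where $\mu_i=\E X_i(1)$. For the first term, subadditivity along integer parts (splitting $t=\lfloor t\rfloor+\{t\}$) together with the bound $\E\|X(t)\|\le c_1\sqrt t$ on $(0,1]$ yields $\E\|X_i(t)\|\le c(\sqrt t+t)$ for all $t>0$.

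This bound makes the integrand at most
\[
 \prod_{r,i}e^{-q_it_{r,i}}\,c\,\frac{\sqrt{t_{r,i}}+t_{r,i}}{t_{r,i}},
\]
which is a product of one-dimensional integrable functions on $(0,\infty)$. Hence the right-hand side of \eqref{eq:exponential-main} is finite. The elementary linear-growth estimate for $\E\|X(t)\|$ at large $t$ is thus the only ingredient beyond Theorem~\ref{thm:continuous} and Tonelli's theorem.
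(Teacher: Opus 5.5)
Your proof is correct and follows the paper's argument: condition on the horizons, apply Theorem~\ref{thm:continuous}, use Tonelli to integrate out the horizons over $T_i\ge s_i$, and obtain finiteness from Hadamard's inequality together with the bound $\E\|X_i(t)\|\le c(\sqrt t+t)$, which comes from subadditivity and Lemma~\ref{lem:small-time}. The only differences are cosmetic: you add measurability remarks, and you center by $\mu_i$ before invoking subadditivity, which is unnecessary because $t\mapsto\E\|X_i(t)\|$ is already subadditive.
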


The stable case can be evaluated explicitly.

\begin{corollary}[Independent symmetric stable processes]\label{cor:stable}
Suppose that $X_i$ is symmetric $\alpha_i$-stable with $1<\alpha_i\le2$, and let $K_i$ be its associated zonoid, so that
\[
 \E e^{i\ip u{X_i(t)}}=\exp\{-t\,h(K_i,u)^{\alpha_i}\}.
\]
Then
\begin{align}\label{eq:stable-main}
 &\E V_d\bigl(Z_1(T_1)[m_1],\ldots,Z_k(T_k)[m_k]\bigr)\\
 &\quad=\frac{V_d\bigl(K_1[m_1],\ldots,K_k[m_k]\bigr)}{\pi^d}
 \prod_{i=1}^k
 \frac{\bigl[\Gamma(1/\alpha_i)\Gamma(1-1/\alpha_i)\bigr]^{m_i}}
 {\Gamma(1+m_i/\alpha_i)}
 T_i^{m_i/\alpha_i}.\notag
\end{align}
\end{corollary}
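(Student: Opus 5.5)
We derive Corollary~\ref{cor:stable} from Theorem~\ref{thm:continuous} by computing the integrand explicitly. Self-similarity gives $X_i^{(r)}(t)\stackrel{d}{=}t^{1/\alpha_i}X_i^{(r)}(1)$. Since all the copies are mutually independent, the determinant factors out columnwise:
\[
 \E\mathcal D_X(\mathbf t)=\prod_{i=1}^k\prod_{r=1}^{m_i}t_{r,i}^{1/\alpha_i}\,
 \E\absdet{X_1^{(1)}(1),\ldots,X_k^{(m_k)}(1)}.
\]
We then apply Vitale's identity \eqref{eq:random-segments} to the independent integrable vectors $X_i^{(r)}(1)$, and afterwards \eqref{eq:stable-selection} together with multilinearity of mixed volumes. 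This yields
\[
 \frac1{d!}\E\absdet{\cdots}=V_d\bigl(\E\seg{X_1(1)}[m_1],\ldots\bigr)
 =\prod_i\Bigl(\frac{\Gamma(1-1/\alpha_i)}{\pi}\Bigr)^{m_i}V_d\bigl(K_1[m_1],\ldots,K_k[m_k]\bigr).
\]
Finite first moments hold because $\alpha_i>1$.

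Substituting into \eqref{eq:continuous-main}, the integral factorizes over $i$ into Dirichlet integrals:
\[
 \int_{\Delta_m(T)}\prod_{r=1}^m t_r^{1/\alpha-1}\,\dd t=\frac{\Gamma(1/\alpha)^m}{\Gamma(1+m/\alpha)}\,T^{m/\alpha}.
\]
This is the standard Dirichlet formula $\int_{\Delta_m(1)}\prod t_r^{a_r-1}\dd t=\prod\Gamma(a_r)/\Gamma(1+\sum a_r)$, applied after rescaling $t=Ts$. When $m_i=0$ the factor is $1$, consistent with the convention for $\Delta_0$.

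Multiplying the factors gives \eqref{eq:stable-main}; the powers of $\pi$ combine to $\pi^{-d}$ because $\sum m_i=d$. The only point needing care is the independence used in the factorization step. The columns involve distinct mutually independent copies, so both the scaling in distribution for each column separately and Vitale's formula apply. Otherwise the argument is direct bookkeeping, and I expect no real obstacle beyond checking the Gamma constants.
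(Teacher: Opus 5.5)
Your proposal is correct and follows essentially the same route as the paper. Both arguments apply Theorem~\ref{thm:continuous} with self-similarity and independence of the columns, evaluate the resulting mean determinant by Vitale's identity \eqref{eq:random-segments} and \eqref{eq:stable-selection}, and factor the integral into Dirichlet integrals \eqref{eq:dirichlet-stable}. Your constants agree with \eqref{eq:stable-main}.
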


For $k=1$, $m_1=d$ and $T_1=1$, formula \eqref{eq:stable-main} is the case $j=d$ of \cite[Theorem~2.3]{MolchanovWespi}, where all expected intrinsic volumes of $Z_1(1)$ were computed.

\begin{example}[Brownian motions]\label{ex:brownian}
Let $X_1,\ldots,X_k$ be independent standard Brownian motions in $\R^d$. Then $\alpha_i=2$ and $K_i=2^{-1/2}B^d$, where $B^d$ is the Euclidean unit ball. Hence
\begin{equation}\label{eq:brownian-main}
 \E V_d\bigl(Z_1(T_1)[m_1],\ldots,Z_k(T_k)[m_k]\bigr)
 =\frac{\kappa_d}{2^{d/2}}
 \prod_{i=1}^k\frac{T_i^{m_i/2}}{\Gamma(1+m_i/2)},
\end{equation}
where $\kappa_d=\Vol_d(B^d)=\pi^{d/2}/\Gamma(1+d/2)$. For $k=1$ this gives $\E\Vol_d(Z_1(T))=(\pi/2)^{d/2}\,T^{d/2}/\Gamma(1+d/2)^2$, which is Eldan's formula \cite{Eldan}. For two planar Brownian motions at time one,
\[
 \E\Vol_2(Z_1(1))=\frac\pi2,
 \qquad
 \E V_2(Z_1(1),Z_2(1))=2.
\]
The first value is due to El Bachir \cite{ElBachir}. Both values agree with \cite{MajumdarComtetRandonFurling,Dospolova}, where they were obtained by methods specific to the plane.
\end{example}

\begin{example}[Isotropic stable processes]\label{ex:isotropic}
If the distribution of $X_i(1)$ is rotation invariant, then $\E e^{i\ip u{X_i(1)}}=\exp\{-c_i\|u\|^{\alpha_i}\}$ for some $c_i>0$ \cite[Theorem~14.14]{Sato}, so that $K_i=c_i^{1/\alpha_i}B^d$. In this case \eqref{eq:stable-main} becomes
\begin{align*}
 &\E V_d\bigl(Z_1(T_1)[m_1],\ldots,Z_k(T_k)[m_k]\bigr)\\
 &\quad=\frac{1}{\pi^{d/2}\,\Gamma(1+d/2)}
 \prod_{i=1}^k
 \frac{\bigl[\Gamma(1/\alpha_i)\Gamma(1-1/\alpha_i)\,c_i^{1/\alpha_i}\bigr]^{m_i}}
 {\Gamma(1+m_i/\alpha_i)}\,T_i^{m_i/\alpha_i}.
\end{align*}
\end{example}

\subsection{Random projections of orthoschemes}\label{ssec:orthoscheme-results}

Let $n_1+\cdots+n_k=n$, and write the standard orthonormal basis of $\R^n$ in mutually orthogonal blocks
\[
 e_{1,1},\ldots,e_{n_1,1};\quad\ldots;\quad e_{1,k},\ldots,e_{n_k,k}.
\]
Define the canonical orthoschemes
\begin{equation}\label{eq:orthoschemes}
 \Omega_i=\conv\bigl(0,e_{1,i},e_{1,i}+e_{2,i},\ldots,e_{1,i}+\cdots+e_{n_i,i}\bigr),
 \qquad i=1,\ldots,k.
\end{equation}
Let $1\le d\le n$, and let $L$ be a random $d$-dimensional linear subspace of $\R^n$ with the Haar (rotation-invariant) distribution.

\begin{theorem}[Mixed volumes of random orthoscheme projections]\label{thm:orthoscheme}
If $m_1+\cdots+m_k=d$, then
\begin{align}\label{eq:orthoscheme-main}
 &\E_L V_d\bigl((\Omega_1|L)[m_1],\ldots,(\Omega_k|L)[m_k]\bigr)\\
 &\quad=\frac{(n-d)!\,\kappa_{n-d}\kappa_d}{n!\,\kappa_n}
 \sum_{\substack{\mathbf j_i\in\J_{m_i}(n_i)\\i=1,\ldots,k}}
 \frac1{\sqrt{\displaystyle\prod_{i=1}^k\prod_{r=1}^{m_i}j_{r,i}}}.\notag
\end{align}
\end{theorem}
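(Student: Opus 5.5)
We use the Gaussian representation of Sudakov and Tsirelson, in a mixed-volume form, together with Kubota-type averaging. Let $G$ be a $d\times n$ matrix with i.i.d.\ standard Gaussian entries. For a convex body $K\subset\R^n$, the random set $GK\subset\R^d$ has the same law as $\|G\|$-scaled projections; precisely, $G=UDR$, where the row space of $G$ is a Haar random $d$-dimensional subspace $L$, independent of the ``radial'' factor $W=G|_L\colon L\to\R^d$. Hence
\[
\E V_d\bigl((G\Omega_1)[m_1],\ldots,(G\Omega_k)[m_k]\bigr)
=\E|\det W|\cdot\E_L V_d\bigl((\Omega_1|L)[m_1],\ldots,(\Omega_k|L)[m_k]\bigr).
\]
This holds because, for a linear map $A\colon L\to\R^d$, mixed volumes scale by $|\det A|$, and $W$ is independent of $L$. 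Here $W$ is a $d\times d$ standard Gaussian matrix in an orthonormal basis of $L$, so $\E|\det W|=\prod_{j=1}^d\sqrt2\,\Gamma((j+1)/2)/\Gamma(j/2)$ is explicit. The first step is to verify this decomposition carefully: $G^\top$ has Haar-distributed column space independent of $G$ restricted to it, by rotation invariance.

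Next, we recognise $G\Omega_i$ as the convex hull of a Gaussian random walk. The vertices of $G\Omega_i$ are $0$ and $S_{j,i}=Ge_{1,i}+\cdots+Ge_{j,i}$, $j\le n_i$. The increments $Ge_{\ell,i}$ are i.i.d.\ standard Gaussian vectors in $\R^d$, and the $k$ walks are independent because they use disjoint columns of $G$. Corollary~\ref{thm:discrete} then gives
\[
\E V_d\bigl((G\Omega_1)[m_1],\ldots\bigr)=\frac1{d!}\sum\frac{\E\bigl|\det\bigl[S^{(r,i)}_{j_{r,i}}\bigr]\bigr|}{\prod j_{r,i}}.
\]
Each column $S^{(r,i)}_{j_{r,i}}$ is $\sqrt{j_{r,i}}$ times a standard Gaussian vector, independently across columns. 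The numerator therefore equals $\sqrt{\prod j_{r,i}}\cdot\E|\det W|$, and each summand becomes $\E|\det W|/\sqrt{\prod j_{r,i}}$.

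Dividing by $\E|\det W|$, the factor cancels and we get
\[
\E_L V_d(\cdots)=\frac1{d!}\sum\frac1{\sqrt{\prod j_{r,i}}}.
\]
It remains to check that $\frac{(n-d)!\,\kappa_{n-d}\kappa_d}{n!\,\kappa_n}$ coincides with $\frac1{d!}$ under the normalisation used. This is the step I expect to be the main obstacle. The paper's constant is the flag coefficient $\binom nd\kappa_{n-d}\kappa_d/\kappa_n$ times $1/d!$, up to inversion, which suggests that the intended $\E_L$ or normalisation differs from my naive one. For instance, the mixed volume may be computed via \eqref{eq:iterated-projection} in $\R^n$ rather than directly in $L$, and a Kubota normalisation of mixed volumes, $V(K_1,\ldots,K_d,B[n-d])$, produces exactly such constants.

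So, concretely, I would first test the decomposition on $k=1$, $m_1=d=1$: there $\E_L$ of the projected length of a segment is $\kappa_{n-1}\cdot 2/(n\kappa_n)$ times its length. I would then recheck the independence claim. If that claim is subtly wrong, I would instead use Sudakov--Tsirelson directly, $V_1(K)=\sqrt{2\pi}\,\E h(K,g)$, combined with Kubota's formula for mixed volumes:
\[
\E_L V_d(K_1|L,\ldots,K_d|L)=c_{n,d}\,V_n(K_1,\ldots,K_d,B^n[n-d]).
\]
In that route, the Gaussian version with $G$ is used only to evaluate the left-hand side, and the constants combine through $\E|\det W|$ and $\kappa$-identities into the stated prefactor.
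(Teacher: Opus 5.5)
\textbf{The gap: $W$ is not a $d\times d$ standard Gaussian matrix.} Your coupling $GK=W(K|L)$ is sound. The independence is also fine once you phrase it without a basis. Write $G=WP$, where $P$ is the $d\times n$ matrix whose rows are an orthonormal basis of $L$; then $|\det W|=\sqrt{\det(GG^\top)}$. The map $G\mapsto GO$, $O\in O(n)$, preserves the law of $G$ and the matrix $GG^\top$ while sending $L$ to $O^\top L$. Hence $L$ is Haar distributed and independent of $\det(GG^\top)$.

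The error is the law of $W$. The number $\sqrt{\det(GG^\top)}$ is the $d$-volume of the parallelotope spanned by the rows of $G$. These are $d$ independent standard Gaussian vectors in $\R^n$, not in $\R^d$. By Gram--Schmidt, this volume is a product of independent $\chi_n,\chi_{n-1},\ldots,\chi_{n-d+1}$ variables, so
\[
 \E|\det W|=2^{d/2}\,\frac{\Gamma\bigl(\frac{n+1}2\bigr)}{\Gamma\bigl(\frac{n-d+1}2\bigr)}.
\]
The determinant coming out of Corollary~\ref{thm:discrete}, by contrast, has its columns in $\R^d$ and mean $2^{d/2}\Gamma(\frac{d+1}2)/\Gamma(\frac12)=d!\,\kappa_d/(2\pi)^{d/2}$. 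The two agree only when $n=d$.

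So the cancellation fails, and your intermediate formula $\frac1{d!}\sum\bigl(\prod_{i,r}j_{r,i}\bigr)^{-1/2}$ is false for $n>d$. For example, with $n=2$, $d=1$, the triangle $\conv(0,e_1,e_1+e_2)$ has mean width $(2+\sqrt2)/\pi$ by Cauchy's formula, not $1+1/\sqrt2$. Your own $d=1$ test detects exactly this: there $|\det W|=\|g\|$ with $g$ standard Gaussian in $\R^n$, and $\sqrt{2/\pi}/\E\|g\|=2\kappa_{n-1}/(n\kappa_n)$. There is no hidden normalization to look for; the mixed volume on the left of \eqref{eq:orthoscheme-main} is the ordinary one in $L$.

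\textbf{With the correct value, your route closes, and it differs from the paper's.} You get
\[
 \E_L V_d\bigl((\Omega_1|L)[m_1],\ldots,(\Omega_k|L)[m_k]\bigr)
 =\frac1{d!}\,\frac{\Gamma\bigl(\frac{d+1}2\bigr)\Gamma\bigl(\frac{n-d+1}2\bigr)}{\sqrt\pi\,\Gamma\bigl(\frac{n+1}2\bigr)}
 \sum_{\substack{\mathbf j_i\in\J_{m_i}(n_i)\\i=1,\ldots,k}}\frac1{\sqrt{\prod_{i,r}j_{r,i}}}.
\]
The duplication formula in the form $m!\,\kappa_m=2^m\pi^{(m-1)/2}\Gamma(\frac{m+1}2)$, used for $m=d$, $n-d$ and $n$, turns the prefactor into $\frac{(n-d)!\,\kappa_{n-d}\kappa_d}{n!\,\kappa_n}$.

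The paper instead links $\E_L V_d(\,\cdot\,|L)$ and $\E V_d(A\,\cdot\,)$ by applying the Sudakov--Tsirelson formula \eqref{eq:tsirelson} and Kubota's formula \eqref{eq:kubota} to $\lambda_1\Omega_1+\cdots+\lambda_k\Omega_k$ and comparing coefficients of the Minkowski polynomial. Your corrected coupling proves that link directly for mixed volumes. It replaces the two cited formulas by a rotation-invariance argument and a chi-product computation. The fallback you sketch at the end is essentially the paper's route, but you did not carry it out.
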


For $k=1$ the left-hand side of \eqref{eq:orthoscheme-main} is $\E_L\Vol_d(\Omega_1|L)$, and by Kubota's formula \eqref{eq:kubota} below, \eqref{eq:orthoscheme-main} is equivalent to the formula
\[
 \IV_d(\Omega)=\frac1{d!}\sum_{(j_1,\ldots,j_d)\in\J_d(n)}\frac1{\sqrt{j_1\cdots j_d}}
\]
of Gao and Vitale \cite{GaoVitale} for the intrinsic volumes of the canonical orthoscheme $\Omega\subset\R^n$; see also \cite[Section~4.1]{VysotskyZaporozhets}.

%======================================================================
\section{Staircase triangulations and partition inversion}\label{sec:braid}

This section contains the deterministic valuation identity behind the discrete results. The argument follows \cite[Sections~6 and~7]{BolotinThesis}.

Let $[n]=\{1,\ldots,n\}$, and let $\Pi_n$ be the lattice of set partitions of $[n]$, ordered by refinement: $\pi\le\rho$ if every block of $\pi$ is contained in a block of $\rho$. Write $|\pi|$ for the number of blocks of $\pi$, and let $\pi_0=\{\{1\},\ldots,\{n\}\}$ be the finest partition.

Let $x_1,\ldots,x_n$ be vectors in a Euclidean space. For $B\subset[n]$ put $x_B=\sum_{j\in B}x_j$, and for $\pi\in\Pi_n$ define the parallelotope
\begin{equation}\label{eq:Ppi}
 P_\pi=\sum_{B\in\pi}\seg{x_B}.
\end{equation}
If $\rho\ge\pi$ and $\tau=(B_1,\ldots,B_\ell)$ is an ordering of the blocks of $\rho$, put
\begin{equation}\label{eq:Ktau}
 K_\tau=\conv\bigl(0,x_{B_1},x_{B_1}+x_{B_2},\ldots,x_{B_1}+\cdots+x_{B_\ell}\bigr).
\end{equation}
We write $\Ord(\rho)$ for the set of all $|\rho|!$ orderings of the blocks of $\rho$.

\begin{lemma}[Interior-face formula]\label{lem:braid-valuation}
Let $x_1,\ldots,x_n$ be linearly independent, and let $\phi$ be a valuation on convex polytopes. Then, for every $\pi\in\Pi_n$,
\begin{equation}\label{eq:braid-valuation}
 \phi(P_\pi)
 =\sum_{\rho\ge\pi}(-1)^{|\pi|-|\rho|}
 \sum_{\tau\in\Ord(\rho)}\phi(K_\tau).
\end{equation}
\end{lemma}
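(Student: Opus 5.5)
The idea is to apply the inclusion-exclusion principle \eqref{eq:inclusion-exclusion} to the staircase (Freudenthal--Kuhn) triangulation of $P_\pi$ and then identify the resulting coefficients by a Möbius-type inversion. Since $x_1,\ldots,x_n$ are linearly independent, so are the vectors $x_B$, $B\in\pi$. Hence $P_\pi$ is an affine image of the cube $[0,1]^{|\pi|}$, and the simplices $K_\tau$, $\tau\in\Ord(\pi)$, are the images of the simplices $\{1\ge y_{\tau(1)}\ge\cdots\ge y_{\tau(\ell)}\ge0\}$. Thus $P_\pi=\bigcup_{\tau\in\Ord(\pi)}K_\tau$, and these chambers form a simplicial complex. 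The vertices of $K_\tau$ are the partial sums $x_{B_1\cup\cdots\cup B_j}$, indexed by the chain of initial unions $\varnothing\subset B_1\subset B_1\cup B_2\subset\cdots$ in the Boolean lattice of $\pi$-unions.

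Because the chambers form a simplicial complex, the intersection of a family $\{K_\tau:\tau\in I\}$ is the convex hull of their common vertices. It always contains $0$ and $x_{[n]}$, so no intersection is empty. The common vertices correspond to the intersection of the chains, which is again a chain from $\varnothing$ to the full union. Such a chain is exactly an ordered partition $\sigma\in\Ord(\rho)$ for some $\rho\ge\pi$, and the intersection equals $K_\sigma$. Therefore \eqref{eq:inclusion-exclusion} gives
\[
 \phi(P_\pi)=\sum_{\rho\ge\pi}\sum_{\sigma\in\Ord(\rho)}f(\sigma)\,\phi(K_\sigma),
 \qquad
 f(\sigma)=\sum_{\substack{\varnothing\ne I\subset\Ord(\pi)\\ \bigcap I=\sigma}}(-1)^{|I|+1},
\]
where $\bigcap I=\sigma$ means that the common chain of the orderings in $I$ is exactly that of $\sigma$. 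The claim \eqref{eq:braid-valuation} is then equivalent to $f(\sigma)=(-1)^{|\pi|-|\rho|}$.

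To compute $f$, fix $\sigma=(D_1,\ldots,D_\ell)$ and let $A(\sigma)\subset\Ord(\pi)$ be the set of orderings whose chain contains the chain of $\sigma$. An ordering in $A(\sigma)$ is the same as a choice, independently for each $D_j$, of an ordering of the $p_j$ blocks of $\pi$ contained in $D_j$. For any nonempty $I\subset A(\sigma)$, the common chain refines that of $\sigma$. Summing $(-1)^{|I|+1}$ over all nonempty $I\subset A(\sigma)$ gives $1$, so
\[
 1=\sum_{\sigma'\preceq\sigma}f(\sigma'),
\]
where $\sigma'$ runs over the ordered partitions obtained from $\sigma$ by splitting each $D_j$ into an ordered sequence of unions of $\pi$-blocks. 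This triangular system determines $f$ uniquely.

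It remains to check that $f(\sigma')=\prod_j(-1)^{p'_j-1}$, where $p'_j$ is the number of $\pi$-blocks in the $j$-th block of $\sigma'$, solves the system; this is exactly $(-1)^{|\pi|-|\rho'|}$. The system factorizes over the blocks $D_j$, so it suffices to check one block containing $p$ blocks of $\pi$. Grouping the ordered set partitions of a $p$-set into $q$ blocks, the required identity becomes
\[
 \sum_{q=1}^{p}(-1)^{p-q}\,q!\,S(p,q)=1,
\]
with $S(p,q)$ the Stirling numbers of the second kind. This is the classical alternating Fubini-number identity. It follows, for instance, from the exponential generating function: $\sum_q q!\,S(p,q)x^q$ summed against $t^p/p!$ gives $1/(1-x(e^t-1))$, and at $x=-1$ this equals $e^{-t}$, from which the identity follows after accounting for the sign $(-1)^p$ and comparing coefficients.

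I expect the main obstacle to be the bookkeeping in the middle step. One must verify rigorously that intersections of chambers are exactly the $K_\sigma$, with the correct identification of the meet of chains. One must also organize the inclusion-exclusion sum so that the inversion is genuinely triangular and multiplicative over the blocks of $\sigma$. The Stirling identity itself is routine.
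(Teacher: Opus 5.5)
Your proof is correct, and it reaches the coefficients by a genuinely different and much shorter route than the paper.

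The paper also starts from inclusion--exclusion over the chambers, but it then computes the \emph{total} coefficient of all faces of a given type $[p_1,\ldots,p_s]$ by direct counting. It counts the tuples $c_m$ and the sets $\widetilde c_m$ of chambers whose intersection with $K_{\mathrm{id}}$ has that type, via the recurrence \eqref{eq:c-recurrence}, and converts between them with Stirling numbers of both kinds. It evaluates the resulting alternating sum by a colexicographic induction with \emph{remainders} (Claims~\ref{claim:two} and~\ref{claim:general}, Proposition~\ref{prop:stirling-sum}). Finally, it divides by the number of faces of that type, using the fact that relabelling the coordinates acts transitively on them.

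You instead use the upper-triangular system $\sum_{\sigma'\preceq\sigma}f(\sigma')=1$. It holds because the nonempty families of chambers whose intersection contains $K_\sigma$ are exactly the nonempty subsets of $A(\sigma)$. The system factorizes over the blocks of $\sigma$, since $A(\sigma)$ and the faces above $K_\sigma$ are products of the corresponding objects for smaller cubes. The guess $f=(-1)^{|\pi|-|\rho|}$ then reduces to the single identity $\sum_q(-1)^{p-q}q!\,S(p,q)=1$. This identity also follows at once from $\sum_q S(p,q)\ff{x}{q}=x^p$ at $x=-1$.

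What your route buys:
\begin{itemize}
\item It avoids all of the counting machinery and the transitivity argument.
\item It shows as a by-product that every $K_\sigma$ with $\rho\ge\pi$ really is an intersection of chambers, since $f(\sigma)\neq0$.
\item It exhibits the Euler-characteristic (M\"obius) reason behind the lemma's name. For a triangulation of a polytope, inclusion--exclusion over the facets gives each interior face the coefficient $(-1)^{\mathrm{codim}}$. Here the faces containing both $0$ and $x_{[n]}$ are exactly the interior ones, and your Fubini-number identity is the corresponding Euler-characteristic computation for the star of a face.
\end{itemize}

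What the paper's route buys in exchange is extra combinatorial information: the counts $c_m(p)$, their generating function, and the link with indecomposable permutations. None of this is needed for the lemma itself.
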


The simplices $K_\tau$ in \eqref{eq:braid-valuation} are the faces of the staircase triangulation of $P_\pi$ described below that contain both $0$ and $x_{[n]}$. The proof of Lemma~\ref{lem:braid-valuation} occupies Sections~\ref{ssec:chambers} and~\ref{ssec:proof-lemma}.

\subsection{Chambers and their intersections}\label{ssec:chambers}

Fix $\pi\in\Pi_n$, put $r=|\pi|$, and enumerate the vectors $x_B$, $B\in\pi$, as $y_1,\ldots,y_r$. They are linearly independent, and every point of $P=P_\pi$ can be written uniquely as $u_1y_1+\cdots+u_ry_r$ with $u=(u_1,\ldots,u_r)\in[0,1]^r$. We identify $P$ with the cube $[0,1]^r$ via these coordinates. For a permutation $\sigma$ of $[r]$ let
\begin{align*}
 K_\sigma&=\conv\bigl(0,y_{\sigma(1)},y_{\sigma(1)}+y_{\sigma(2)},\ldots,y_{\sigma(1)}+\cdots+y_{\sigma(r)}\bigr)\\
 &=\{u:0\le u_{\sigma(r)}\le\cdots\le u_{\sigma(1)}\le1\}.
\end{align*}
These $r!$ simplices, which we call chambers, form the classical staircase triangulation of the cube \cite{Freudenthal,Kuhn}; in particular, $P=\bigcup_{\sigma}K_\sigma$. We write $\mathfrak S_r$ for the set of permutations of $[r]$ and $\mathrm{id}$ for the identity permutation.

Every intersection of chambers contains $0$ and $y_1+\cdots+y_r$ and is a common face of the chambers involved. It is obtained from any chamber $K_\sigma$ containing it by imposing equalities within consecutive groups of coordinates in the order of $\sigma$. If the groups have sizes $p_1,\ldots,p_s$, where $p_1+\cdots+p_s=r$, then the intersection equals
\[
 \conv\Bigl(0,\ \sum_{a=1}^{p_1}y_{\sigma(a)},\ \sum_{a=1}^{p_1+p_2}y_{\sigma(a)},\ \ldots,\ \sum_{a=1}^{r}y_{\sigma(a)}\Bigr),
\]
and we say that it has type $[p_1,\ldots,p_s]$. The type is determined by the intersection itself, namely by the chain of sets $\{\sigma(1),\ldots,\sigma(p_1)\}\subset\{\sigma(1),\ldots,\sigma(p_1+p_2)\}\subset\cdots$. There are $r!/(p_1!\cdots p_s!)$ distinct intersections of type $[p_1,\ldots,p_s]$, and the chambers themselves are the intersections of type $[1,\ldots,1]$.

\begin{lemma}\label{lem:counting}
For $m\ge1$ and a type $[p_1,\ldots,p_s]$, let $c_m(p_1,\ldots,p_s)$ be the number of ordered $m$-tuples $(\sigma_1,\ldots,\sigma_m)$ of permutations of $[r]$, repetitions and the identity being allowed, such that $K_{\sigma_1}\cap\cdots\cap K_{\sigma_m}\cap K_{\mathrm{id}}$ has type $[p_1,\ldots,p_s]$. Then
\begin{equation}\label{eq:c-multiplicative}
 c_m(p_1,\ldots,p_s)=c_m(p_1)\cdots c_m(p_s),
\end{equation}
where $c_m(p)$ denotes the corresponding number for the type $[p]$ in the cube $[0,1]^p$. Moreover, $c_m(1)=1$ and
\begin{equation}\label{eq:c-recurrence}
 c_m(p)=(p!)^m-\sum_{j=1}^{p-1}c_m(j)\bigl((p-j)!\bigr)^m,\qquad p\ge2.
\end{equation}
\end{lemma}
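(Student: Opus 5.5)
We work in cube coordinates, where $K_\sigma=\{u:u_{\sigma(1)}\ge\cdots\ge u_{\sigma(r)}\}$. Every intersection of chambers corresponds to an ordered set partition (a chain of sets) as described above. The intersection $K_{\sigma_1}\cap\cdots\cap K_{\sigma_m}\cap K_{\mathrm{id}}$ is the set of $u$ satisfying all the orderings simultaneously. Since it contains $K_{\mathrm{id}}$'s constraint, it is a face of $K_{\mathrm{id}}$. Its type $[p_1,\ldots,p_s]$ records the maximal consecutive groups of $\mathrm{id}$ on which all coordinates are forced equal. Concretely, $u_1\ge u_2\ge\cdots\ge u_r$, and the face is cut out by the equalities forced within consecutive intervals $I_1=\{1,\ldots,p_1\}$, $I_2$, and so on.

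The key observation is the following. The intersection has type exactly $[p_1,\ldots,p_s]$ if and only if two conditions hold. First, each $\sigma_\ell$ is compatible with the flag $I_1\subset I_1\cup I_2\subset\cdots$, meaning $\sigma_\ell$ maps initial segments $\{1,\ldots,p_1+\cdots+p_a\}$ onto $I_1\cup\cdots\cup I_a$; otherwise some pair across a block boundary would be forced equal. Second, for each $a$, the restrictions of $\sigma_1,\ldots,\sigma_m$ to the block $I_a$ force all coordinates in $I_a$ to be equal. I would justify the first condition this way: if $\sigma_\ell$ places some $j\in I_b$ before some $i\in I_a$ with $a<b$, then $u_j\ge u_i$ together with $u_i\ge u_j$ from $\mathrm{id}$ forces $u_i=u_j$, and hence the whole range between them collapses, contradicting the type. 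Conversely, compatible permutations act independently on the blocks. The constraints then decouple into the product of the cubes $[0,1]^{p_a}$, glued along the monotone chain. Within block $a$, the face collapses to type $[p_a]$ exactly when the block's own intersection does.

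This yields a bijection between the admissible $m$-tuples and $s$-tuples of block-wise $m$-tuples, each counted by $c_m(p_a)$, which proves \eqref{eq:c-multiplicative}. The main care needed is in showing that the global type is the concatenation of the block types. I would argue this via the chain description: the chain of the intersection is the common refinement of the chains compatible with each $\sigma_\ell$ and with $\mathrm{id}$.

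For the recurrence we have $c_m(1)=1$ trivially, since $r=1$ and there is a single permutation. For $p\ge2$, count all $(p!)^m$ tuples in $[0,1]^p$ and classify them by the type of the resulting face of $K_{\mathrm{id}}$. Each face $F$ of $K_{\mathrm{id}}$ containing $0$ and $y_1+\cdots+y_p$ has a type $[q_1,\ldots,q_t]$. Separating off the first block $q_1=j$, the tuples of type $[p]$ are $c_m(p)$ in number. The remaining tuples have a first block of size $j<p$, and they are exactly those in which every $\sigma_\ell$ maps $\{1,\ldots,j\}$ onto itself and the restriction to the first block yields type $[j]$; the rest of each $\sigma_\ell$ on $\{j+1,\ldots,p\}$ is arbitrary. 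By the same decoupling argument, the number of such tuples is $c_m(j)\bigl((p-j)!\bigr)^m$, giving \eqref{eq:c-recurrence}.

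The step I expect to need the most care is this last one. One must check that, given that the first $j$ coordinates form a maximal equal block, the remaining parts of the permutations are unconstrained apart from stabilizing $\{1,\ldots,j\}$. This requires verifying that the maximality of the first block depends only on the restrictions to $\{1,\ldots,j\}$. It follows from the compatibility criterion: stabilizing $\{1,\ldots,j\}$ prevents any equality across position $j$.
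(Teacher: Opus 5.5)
Your proposal is correct and follows the paper's proof. It uses the same two-condition characterization of type $[p_1,\ldots,p_s]$: every $\sigma_\ell$ maps each initial segment $\{1,\ldots,p_1+\cdots+p_a\}$ onto itself, and the restrictions to each group collapse that group to a single one. From there you choose the restrictions independently for the product formula and classify the $(p!)^m$ tuples by their first group $\{1,\ldots,j\}$ for the recurrence, and your cross-block inversion argument supplies the justification of the first condition that the paper leaves implicit.

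One small slip: the chain of an intersection of chambers consists of the sets common to all the chains involved. This is a common coarsening, matching the convex hull of the common vertices, not a common refinement. Your inequality-based argument does not depend on that remark.
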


\begin{proof}
Let $G_1=\{1,\ldots,p_1\}$, $G_2=\{p_1+1,\ldots,p_1+p_2\}$, and so on. The intersection $K_{\sigma_1}\cap\cdots\cap K_{\sigma_m}\cap K_{\mathrm{id}}$ has type $[p_1,\ldots,p_s]$ if and only if two conditions hold. First, every $\sigma_\ell$ maps each of the sets $G_1\cup\cdots\cup G_a$, $a=1,\ldots,s$, onto itself. Second, for every $a$ the restrictions of $\sigma_1,\ldots,\sigma_m$ to $G_a$, together with the identity, produce a single group of size $p_a$. The restrictions to different groups can be chosen independently, which proves \eqref{eq:c-multiplicative}.

The only permutation of $\{1\}$ is the identity, so $c_m(1)=1$. Let $p\ge2$. There are $(p!)^m$ tuples of permutations of $[p]$. If the intersection of a tuple with $K_{\mathrm{id}}$ has more than one group, then its first group is $\{1,\ldots,j\}$ for some $j<p$. Given $j$, the restrictions of the permutations to $\{1,\ldots,j\}$ can be chosen in $c_m(j)$ ways, and their restrictions to the remaining $p-j$ elements in $((p-j)!)^m$ ways. This proves \eqref{eq:c-recurrence}.
\end{proof}

\begin{remark}\label{rem:generating}
In terms of the formal power series
\[
 F_m(w)=\sum_{j=0}^{\infty}(j!)^m w^j,
 \qquad \mathcal P_m(w)=\sum_{p=1}^{\infty}c_m(p)\,w^p,
\]
the recurrence \eqref{eq:c-recurrence} reads $\mathcal P_m(w)F_m(w)=F_m(w)-1$, that is,
\[
 \mathcal P_m(w)=1-\frac1{F_m(w)}.
\]
For $m=1$, the permutations counted by $c_1(p)$ are exactly those that do not map any proper initial segment $\{1,\ldots,j\}$, $1\le j<p$, onto itself. Thus $(c_1(p))_{p\ge1}=(1,1,3,13,71,461,\ldots)$ is the sequence of the numbers of indecomposable permutations (sequence A003319 in the OEIS).
\end{remark}

Fix now a type $[p_1,\ldots,p_s]$ in which at least one $p_a$ exceeds one, and write $c_m=c_m(p_1,\ldots,p_s)$. Let $\widetilde c_m=\widetilde c_m(p_1,\ldots,p_s)$ be the number of $m$-element sets $\{\sigma_1,\ldots,\sigma_m\}$ of distinct non-identity permutations such that $K_{\sigma_1}\cap\cdots\cap K_{\sigma_m}\cap K_{\mathrm{id}}$ has type $[p_1,\ldots,p_s]$. Let $S(a,b)$ denote the Stirling numbers of the second kind and $s(a,b)$ the signed Stirling numbers of the first kind.

\begin{lemma}\label{lem:stirling}
For every $m\ge1$,
\begin{equation}\label{eq:c-ctilde}
 c_m=\sum_{j=1}^{m}j!\,S(m+1,j+1)\,\widetilde c_j
 \qquad\text{and}\qquad
 m!\,\widetilde c_m=\sum_{q=1}^{m}s(m+1,q+1)\,c_q .
\end{equation}
Consequently, for every $M\ge2$,
\begin{equation}\label{eq:alternating-stirling}
 \sum_{j=2}^{M}\frac{(-1)^{j+1}}{j}\,\widetilde c_{j-1}
 =\frac{(-1)^{M-1}}{M!}\sum_{j=3}^{M+1}s(M+1,j)\,c_{j-2}.
\end{equation}
\end{lemma}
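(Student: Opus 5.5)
The plan is a double-counting argument for the first identity in \eqref{eq:c-ctilde}, Stirling inversion for the second, and a telescoping identity for \eqref{eq:alternating-stirling}. The type $[p_1,\ldots,p_s]$ is fixed throughout, with some $p_a>1$.

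\emph{First identity.} The intersection $K_{\sigma_1}\cap\cdots\cap K_{\sigma_m}\cap K_{\mathrm{id}}$ depends only on the set $A=\{\sigma_1,\ldots,\sigma_m\}\setminus\{\mathrm{id}\}$ of distinct non-identity values occurring in the tuple.
\begin{itemize}
\item If $A=\varnothing$, the intersection is $K_{\mathrm{id}}$, of type $[1,\ldots,1]$. This is not our type, so $|A|=j$ with $1\le j\le m$.
\item Fix such an $A$ with $|A|=j$. We count the ordered $m$-tuples whose value set contains $A$ and is contained in $A\cup\{\mathrm{id}\}$.
\item These are surjections $[m]\to A\cup\{\mathrm{id}\}$ together with surjections $[m]\to A$. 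Their number is $(j+1)!\,S(m,j+1)+j!\,S(m,j)$.
\item By the recurrence $S(m+1,j+1)=(j+1)S(m,j+1)+S(m,j)$, this equals $j!\,S(m+1,j+1)$.
\end{itemize}
Summing over the $\widetilde c_j$ admissible sets $A$ gives the first formula.

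\emph{Second identity.} Put $d_j=j!\,\widetilde c_j$, and set $c_0=\widetilde c_0=0$. This convention is consistent: $S(1,j+1)=\delta_{j0}$, and the empty set does not produce our type. The first identity then reads
\[
 c_m=\sum_{j\ge0}S(m+1,j+1)\,d_j .
\]
The lower-triangular matrices $\bigl(S(a,b)\bigr)_{a,b\ge1}$ and $\bigl(s(a,b)\bigr)_{a,b\ge1}$ are mutually inverse, so $\sum_q s(m+1,q+1)S(q+1,j+1)=\delta_{mj}$. Hence
\[
 d_m=\sum_{q\ge0}s(m+1,q+1)\,c_q ,
\]
and the $q=0$ term vanishes because $c_0=0$. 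This is the second formula.

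\emph{Alternating identity.} Substitute the second identity with $m=j-1$ into the left-hand side of \eqref{eq:alternating-stirling} and interchange the sums. The left-hand side becomes
\[
 \sum_{q=1}^{M-1}c_q\sum_{j=q+1}^{M}\frac{(-1)^{j+1}s(j,q+1)}{j!} .
\]
It therefore suffices to prove
\[
 \sum_{j=q+1}^{M}\frac{(-1)^{j+1}s(j,q+1)}{j!}=\frac{(-1)^{M-1}s(M+1,q+2)}{M!}
\]
for all $M\ge q$. We argue by induction on $M$.
\begin{itemize}
\item Base case $M=q$: both sides vanish, since $s(q+1,q+2)=0$.
\item Inductive step: the right-hand side at $M$ minus the right-hand side at $M-1$ equals
\[
 \frac{(-1)^{M-1}}{M!}\bigl[s(M+1,q+2)+M\,s(M,q+2)\bigr].
\]
By $s(M+1,k)=s(M,k-1)-M\,s(M,k)$, this is $(-1)^{M+1}s(M,q+1)/M!$, which is exactly the new term on the left.
\end{itemize}
Reindexing $j=q+2$ then gives \eqref{eq:alternating-stirling}.

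The only delicate point is the bookkeeping at $j=0$ and $q=0$. The convention $c_0=\widetilde c_0=0$ is exactly what lets the inversion run on the full index range, and it relies on the type being different from $[1,\ldots,1]$.
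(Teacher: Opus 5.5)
Your proof is correct and follows the paper's own three-step structure: a double count of tuples by their set of non-identity values, inversion of the Stirling matrices, and an interchange of summation followed by a partial-sum identity for Stirling numbers of the first kind. The differences are only local: you get $j!\,S(m+1,j+1)$ by counting surjections and applying the recurrence instead of the paper's extra identity position, you pad with $c_0=\widetilde c_0=0$ instead of restricting to the trailing block of the Stirling matrices, and you prove $\sum_{j=q+1}^{M}(-1)^{j+1}s(j,q+1)/j!=(-1)^{M-1}s(M+1,q+2)/M!$ by induction where the paper cites the classical unsigned identity from Graham--Knuth--Patashnik.
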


\begin{proof}
Since the type has a part exceeding one, every tuple counted by $c_m$ contains a non-identity permutation. Suppose that such a tuple contains exactly $j$ distinct non-identity permutations; its intersection with $K_{\mathrm{id}}$ depends only on the set of these permutations. Add to the $m$ positions of the tuple one more position, reserved for the identity. The $m+1$ positions then split into $j+1$ non-empty classes. The class of the additional position consists of it and of the positions occupied by the identity, and each of the other $j$ classes consists of the positions occupied by one of the non-identity permutations. There are $S(m+1,j+1)$ such splittings, $j!$ ways to assign the $j$ permutations to the classes that do not contain the additional position, and $\widetilde c_j$ ways to choose the set of permutations. This proves the first identity in \eqref{eq:c-ctilde}.

The lower-triangular matrices $\bigl(S(a+1,b+1)\bigr)_{a,b=1}^{m}$ and $\bigl(s(a+1,b+1)\bigr)_{a,b=1}^{m}$ are mutually inverse, because $\sum_{b}S(a,b)\,s(b,c)=\delta_{ac}$ and $s(1,c)=0$ for $c\ge2$; see \cite[Chapter~6]{GrahamKnuthPatashnik}. Hence the first identity in \eqref{eq:c-ctilde} implies the second one.

By the second identity in \eqref{eq:c-ctilde}, $\widetilde c_{j-1}/j=(j-1)!\,\widetilde c_{j-1}/j!=\sum_{q=2}^{j}s(j,q)\,c_{q-1}/j!$. Therefore
\[
 \sum_{j=2}^{M}\frac{(-1)^{j+1}}{j}\,\widetilde c_{j-1}
 =\sum_{q=2}^{M}c_{q-1}\sum_{j=q}^{M}\frac{(-1)^{j-1}s(j,q)}{j!}.
\]
Since $s(j,q)=(-1)^{j-q}|s(j,q)|$, the classical identity \cite[Section~6.1]{GrahamKnuthPatashnik}
\[
 \sum_{j=q}^{M}\frac{|s(j,q)|}{j!}=\frac{|s(M+1,q+1)|}{M!}
\]
gives
\begin{align*}
 \sum_{j=2}^{M}\frac{(-1)^{j+1}}{j}\,\widetilde c_{j-1}
 &=\frac1{M!}\sum_{q=2}^{M}(-1)^{q-1}|s(M+1,q+1)|\,c_{q-1}\\
 &=\frac{(-1)^{M-1}}{M!}\sum_{q=2}^{M}s(M+1,q+1)\,c_{q-1},
\end{align*}
which is \eqref{eq:alternating-stirling} after the substitution $j=q+1$.
\end{proof}

We now evaluate the right-hand side of \eqref{eq:alternating-stirling}. For an integer $M\ge2$ and a real number $\lambda$ write $\ff{\lambda}{M+1}=\lambda(\lambda-1)\cdots(\lambda-M)$. The defining property of the Stirling numbers of the first kind, $\sum_{j=1}^{M+1}s(M+1,j)\lambda^j=\ff\lambda{M+1}$, see \cite[Section~6.1]{GrahamKnuthPatashnik}, gives
\begin{equation}\label{eq:stirling-poly}
 \sum_{j=3}^{M+1}s(M+1,j)\,\lambda^{j-2}
 =\frac{\ff\lambda{M+1}}{\lambda^2}-\frac{s(M+1,1)}{\lambda}-s(M+1,2),
 \qquad\lambda\neq0.
\end{equation}
Let $\mathcal B$ be a finite set of positive integers. A \emph{$\mathcal B$-remainder} is a function of $\lambda>0$ and of $M\ge2$ of the form
\[
 R(\lambda)=\lambda^{-2}\sum_{b\in\mathcal B}\beta_b\,\ff{b\lambda}{M+1},
\]
where the real coefficients $\beta_b$ depend neither on $\lambda$ nor on $M$. The sum of a $\mathcal B$-remainder and a $\mathcal B'$-remainder is a $(\mathcal B\cup\mathcal B')$-remainder. If $R$ is a $\mathcal B$-remainder and $c$ is a positive integer, then $\lambda\mapsto R(c\lambda)$ is a $c\mathcal B$-remainder. Since $\ff{b}{M+1}=0$ for all integers $1\le b\le M$, every $\mathcal B$-remainder satisfies
\begin{equation}\label{eq:remainder-vanishes}
 R(1)=0\qquad\text{whenever }\max\mathcal B\le M.
\end{equation}
Note also that $c_m(2)=2^m-1$ by \eqref{eq:c-recurrence}.

\begin{claim}\label{claim:two}
Let $a\ge1$. For all $M\ge2$ and $\lambda>0$,
\[
 \sum_{j=3}^{M+1}s(M+1,j)\,c_{j-2}(2)^a\,\lambda^{j-2}
 =\frac{(-1)^{a-1}s(M+1,1)}{2^a\lambda}+R(\lambda),
\]
where $R$ is a $\mathcal B$-remainder with $\mathcal B=\{1,2,4,\ldots,2^a\}$.
\end{claim}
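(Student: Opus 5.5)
We expand the power and reduce to \eqref{eq:stirling-poly}. Since $c_{j-2}(2)=2^{j-2}-1$, the binomial theorem gives
\[
 c_{j-2}(2)^a=\sum_{\ell=0}^{a}\binom a\ell(-1)^{a-\ell}\,2^{\ell(j-2)}.
\]
Hence the left-hand side of Claim~\ref{claim:two} equals
\[
 \sum_{\ell=0}^{a}\binom a\ell(-1)^{a-\ell}\sum_{j=3}^{M+1}s(M+1,j)\,(2^\ell\lambda)^{j-2}.
\]
We apply \eqref{eq:stirling-poly} to each inner sum with $\lambda$ replaced by $2^\ell\lambda$. The inner sum becomes
\[
 \frac{\ff{2^\ell\lambda}{M+1}}{4^\ell\lambda^2}-\frac{s(M+1,1)}{2^\ell\lambda}-s(M+1,2).
\]
The first term is a $\{2^\ell\}$-remainder: its coefficient $4^{-\ell}$ is independent of $\lambda$ and $M$. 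Summing over $\ell$ with the binomial weights, these terms form a $\mathcal B$-remainder with $\mathcal B=\{1,2,4,\ldots,2^a\}$, using the closure of remainders under sums.

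Next we collect the two remaining terms.

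\emph{The $s(M+1,2)$ terms.} They are multiplied by $\sum_\ell\binom a\ell(-1)^{a-\ell}=(1-1)^a=0$, since $a\ge1$. So they cancel.

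\emph{The $s(M+1,1)/\lambda$ terms.} Their total coefficient is
\[
 -\sum_{\ell}\binom a\ell(-1)^{a-\ell}2^{-\ell}=-(2^{-1}-1)^a=-(-1)^a2^{-a}=\frac{(-1)^{a-1}}{2^a}.
\]
This gives exactly $(-1)^{a-1}s(M+1,1)/(2^a\lambda)$, as claimed.

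No step is a real obstacle; the argument is bookkeeping. The point needing the most care is that the coefficients $\beta_b=\binom a\ell(-1)^{a-\ell}4^{-\ell}$ do not depend on $M$ or $\lambda$, as the definition of a remainder requires. They clearly do not. The term $\ell=0$ contributes $b=1\in\mathcal B$, and the signs work out as in the computation of the $s(M+1,1)$ coefficient above.
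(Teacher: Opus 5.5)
Your proposal is correct and follows essentially the same route as the paper's proof. Both expand $(2^{j-2}-1)^a$ binomially and apply \eqref{eq:stirling-poly} at $2^\ell\lambda$. Both observe that the $s(M+1,2)$ terms cancel because $(1-1)^a=0$ and that the $s(M+1,1)$ coefficient is $-(2^{-1}-1)^a=(-1)^{a-1}2^{-a}$, and both collect the falling-factorial terms into a $\{1,2,\ldots,2^a\}$-remainder.
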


\begin{proof}
We expand $c_{j-2}(2)^a=(2^{j-2}-1)^a$ by the binomial theorem and apply \eqref{eq:stirling-poly} with $\lambda$ replaced by $2^i\lambda$:
\begin{align*}
 &\sum_{j=3}^{M+1}s(M+1,j)\,c_{j-2}(2)^a\,\lambda^{j-2}
 =\sum_{i=0}^{a}\binom ai(-1)^{a-i}\sum_{j=3}^{M+1}s(M+1,j)\,(2^i\lambda)^{j-2}\\
 &\quad=-s(M+1,2)\sum_{i=0}^{a}\binom ai(-1)^{a-i}
 -\frac{s(M+1,1)}{\lambda}\sum_{i=0}^{a}\binom ai\frac{(-1)^{a-i}}{2^i}\\
 &\qquad+\frac1{\lambda^{2}}\sum_{i=0}^{a}\binom ai\frac{(-1)^{a-i}}{4^i}\ff{2^i\lambda}{M+1}.
\end{align*}
The first sum vanishes because $a\ge1$, the second sum equals $(1/2-1)^a=(-1)^a2^{-a}$, and the last term is a $\mathcal B$-remainder.
\end{proof}

Products of the numbers $c_{j-2}(p)$ with repeated arguments are conveniently encoded by nondecreasing tuples: the product $c_{j-2}(p_1)^{a_1}\cdots c_{j-2}(p_s)^{a_s}$ with $1<p_1<\cdots<p_s$ and positive integers $a_1,\ldots,a_s$ corresponds to the tuple
\[
 \mathbf p=(\underbrace{p_1,\ldots,p_1}_{a_1},\ldots,\underbrace{p_s,\ldots,p_s}_{a_s}).
\]
In this language, Claim~\ref{claim:two} treats the tuples $(2,\ldots,2)$.

We assume that all nondecreasing tuples are ordered colexicographically. Colexicographic order compares tuples from right to left, in contrast to lexicographic order, which compares them from left to right. To compare tuples of different lengths, one extends the shorter tuple by zeros on the right. Equivalently, for two distinct tuples \[
  \mathbf p = (p_1,\ldots,p_N), \quad \mathbf q = (q_1,\ldots,q_M)
\] 
we write $\mathbf q < \mathbf p$ if and only if
\[
  M<N\quad \text{or} \quad \exists m\ \forall i>m\ q_i=p_i\ \text{and}\ q_m < p_m. 
\]

\begin{claim}\label{claim:general}
Let $N\ge1$, and let $\mathbf p=(p_1,\ldots,p_N)$ be a nondecreasing tuple of integers greater than one. Put
\[
 \bar p=p_1+\cdots+p_N,\qquad
 \gamma=(p_1-1)+\cdots+(p_N-1),\qquad
 \Phi=p_1!\cdots p_N!.
\]
Then, for all $M\ge2$ and $\lambda>0$,
\begin{equation}\label{eq:claim-general}
 \sum_{j=3}^{M+1}s(M+1,j)\,c_{j-2}(p_1)\cdots c_{j-2}(p_N)\,\lambda^{j-2}
 =\frac{(-1)^{\gamma+1}s(M+1,1)}{\Phi\,\lambda}+R(\lambda),
\end{equation}
where $R$ is a $\mathcal B$-remainder for some $\mathcal B\subset\{1,\ldots,\bar p!\}$.
\end{claim}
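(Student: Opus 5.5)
We argue by strong induction on the tuple $\mathbf p$ in the colexicographic order. The base case is the family of tuples $(2,\ldots,2)$, which is settled by Claim~\ref{claim:two}. For $\mathbf p=(2,\ldots,2)$ with $N=a$ we have $\gamma=a$, $\Phi=2^a$, and $(-1)^{a+1}=(-1)^{a-1}$. The remainder set $\{1,2,\ldots,2^a\}$ lies in $\{1,\ldots,(2a)!\}$, since $2^a\le(2a)!$.

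\textbf{Step 1: expanding the largest entry.} Let $p=p_N>2$. Rewrite the recurrence \eqref{eq:c-recurrence}, as a function of $m=j-2$, in the form
\[
 c_m(p)=(p!)^m-\sum_{i=1}^{p-1}c_m(i)\bigl((p-i)!\bigr)^m .
\]
The term $c_m(p-1)\cdot 1$ appears in this sum with $i=p-1$. The terms with $i=1$ give $c_m(1)=1$ times $((p-1)!)^m$, a pure exponential. Iterating, one shows that $c_m(p)$ is a finite signed combination of products of the form
\[
 \prod c_m(q_\ell)\cdot\prod (r_\ell!)^m ,
\]
where every $q_\ell<p$. The cleanest route is to use the generating function $\mathcal P_m=1-1/F_m$ of Remark~\ref{rem:generating}. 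Writing $1/F_m=\sum_j(-1)^j(F_m-1)^j$ expresses $c_m(p)$ as
\[
 c_m(p)=\sum(-1)^{\ell+1}(b_1!\cdots b_\ell!)^m ,
\]
summed over compositions $(b_1,\ldots,b_\ell)$ of $p$. So $c_m(p)$ is a signed sum of exponentials $\mu^m$, where $\mu=b_1!\cdots b_\ell!$ is an integer in $\{1,\ldots,p!\}$.

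\textbf{Step 2: reduction.} Substituting the expansion from Step~1 for $c_{j-2}(p_N)$ into the left side of \eqref{eq:claim-general} turns it into
\[
 \sum_{(b)}(-1)^{\ell+1}\,L_{\mathbf p'}(\mu_b\lambda),
\]
where $\mathbf p'=(p_1,\ldots,p_{N-1})$ and $L_{\mathbf p'}$ denotes the left side for $\mathbf p'$. The tuple $\mathbf p'$ is shorter, hence colexicographically smaller, so the induction hypothesis applies; if $N=1$, use \eqref{eq:stirling-poly} directly.

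The term $-s(M+1,2)$ appears when $N=1$. It carries the coefficient $\sum_{(b)}(-1)^{\ell+1}=c_0(p)$, and for $p\ge2$ this is $1-1/F_0$ evaluated at the coefficient of $w^p$. Since $F_0=1/(1-w)$, we get $1-1/F_0=w$, so the coefficient is zero. This cancellation is exactly the analogue of the vanishing first sum in Claim~\ref{claim:two}.

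The remainders transform correctly: each inductive remainder $R(\mu\lambda)$ is a $\mu\mathcal B$-remainder, and we need its elements to lie in $\{1,\ldots,\bar p!\}$. This holds because
\[
 \mu\cdot\bar p'!\le p_N!\,(\bar p-p_N)!\le\bar p! .
\]
The same bound handles the factor $\mu^{-2}$ and similar scalings.

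\textbf{Step 3: the $1/\lambda$ coefficient (main obstacle).} The $1/\lambda$ term becomes
\[
 \frac{(-1)^{\gamma'+1}s(M+1,1)}{\Phi'\lambda}\sum_{(b)}\frac{(-1)^{\ell+1}}{b_1!\cdots b_\ell!} .
\]
It therefore remains to show that
\[
 \sum_{(b)}\frac{(-1)^{\ell+1}}{b_1!\cdots b_\ell!}=\frac{(-1)^{p-1}}{p!} .
\]
The left side is the coefficient of $w^p$ in $1-1/F_{-1}(w)$. Here $F_{-1}=\sum w^j/j!=e^w$, so $1-e^{-w}$ has $w^p$-coefficient $(-1)^{p-1}/p!$, as required.

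Using Step~1 directly, rather than a genuine colex induction, makes the induction simply on $N$. The base case $N=0$ reads $L_\varnothing(\lambda)=\sum_j s(M+1,j)\lambda^{j-2}$, which is given by \eqref{eq:stirling-poly}. This is fine, provided the constant $-s(M+1,2)$ cancels whenever $N\ge1$, which the $c_0(p)=0$ computation above guarantees. The delicate bookkeeping is checking that the base case contributes $\lambda^{-2}\ff{\lambda}{M+1}$ as an admissible remainder with $\mathcal B=\{1\}$, and that the sign exponents accumulate to
\[
 \gamma+1=\sum_{i=1}^N(p_i-1)+1 .
\]
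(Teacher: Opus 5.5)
Your argument is correct, and it takes a different route from the paper. The paper applies \eqref{eq:c-recurrence} only once to $c_{j-2}(p_N)$. For $i\ge2$ the $i$th term replaces $p_N$ by $i$ instead of deleting it, so the new tuples $\mathbf p^{(i)}$ keep the length $N$. That is why the paper needs the colexicographic induction, Claim~\ref{claim:two} as a base case, a separate case $N=1$, and the binomial identity $1+\sum_{i=1}^{p-1}(-1)^i\binom pi=(-1)^{p+1}$ to collect the $1/\lambda$ coefficient. You instead unroll the recurrence completely via $\mathcal P_m=1-1/F_m$. Then $c_{j-2}(p)=\sum(-1)^{\ell+1}(b_1!\cdots b_\ell!)^{j-2}$, summed over compositions of $p$, is a signed sum of pure exponentials. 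Both facts you need (the vanishing of the $s(M+1,2)$ term and the value $(-1)^{p-1}/p!$) follow from the formal identity $[w^p]\bigl(1-1/\sum_{j\ge0}a_jw^j\bigr)=\sum(-1)^{\ell+1}a_{b_1}\cdots a_{b_\ell}$, valid for any sequence with $a_0=1$, applied with $a_j=1$ and with $a_j=1/j!$. Your route is shorter and gives an explicit formula for $c_m(p)$. In fact it needs no induction at all. Expanding all $N$ factors at once writes the left side of \eqref{eq:claim-general} as a signed sum of your $L_\varnothing(\mu\lambda)$ with integers $\mu\le\Phi$, and then \eqref{eq:stirling-poly} gives everything. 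The constant and $1/\lambda$ parts factor over the entries, and you even get the sharper $\mathcal B\subset\{1,\ldots,\Phi\}$. The paper's route, in turn, uses only the finite recurrence and avoids power-series inversion. Two presentational points. First, your write-up mixes the colexicographic framing (base case $(2,\ldots,2)$, restriction $p_N>2$) with the induction on $N$; in the latter both are unnecessary, because the composition formula also covers $p=2$. Second, your ``$c_0(p)$'' and ``$F_{-1}$'' should be justified by the formal identity above rather than by Remark~\ref{rem:generating}, which concerns the counting numbers with $m\ge1$ only.
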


\begin{proof}
We argue by induction along the colexicographic order. The inductive step for $\mathbf p$ described below uses only tuples that are smaller than $\mathbf p$.

If $p_N=2$, then $\mathbf p=(2,\ldots,2)$, and the assertion is Claim~\ref{claim:two} with $a=N$: indeed, $\gamma=N$, $\Phi=2^N$ and $2^N\le(2N)!=\bar p!$. Let now $p=p_N\ge3$, and assume that the claim holds for all tuples smaller than $\mathbf p$. Put
\[
 Q_j=c_{j-2}(p_1)\cdots c_{j-2}(p_{N-1}),
\]
where $Q_j=1$ if $N=1$. By \eqref{eq:c-recurrence} with $m=j-2\ge1$, the left-hand side of \eqref{eq:claim-general} equals
\begin{equation}\label{eq:split}
 \sum_{j=3}^{M+1}s(M+1,j)\,Q_j\,(p!\,\lambda)^{j-2}
 -\sum_{i=1}^{p-1}\sum_{j=3}^{M+1}s(M+1,j)\,Q_j\,c_{j-2}(i)\,\bigl((p-i)!\,\lambda\bigr)^{j-2}.
\end{equation}

The induction rests on the following observation about the lists of block sizes of the sums in \eqref{eq:split}. In the first sum one copy of $p=p_N$ is removed, and the list is $(p_1,\ldots,p_{N-1})$, a proper initial segment of $\mathbf p$. In the $i$th sum the final entry $p_N$ is replaced by $i$, which gives the list $(p_1,\ldots,p_{N-1},i)$; let $\mathbf p^{(i)}$ be the nondecreasing tuple obtained from it by sorting. For $i=1$ the block of size one disappears, because $c_{j-2}(1)=1$, and $\mathbf p^{(1)}=(p_1,\ldots,p_{N-1})$. For $2\le i<p$, let $\ell$ be the greatest index with $p_\ell<p$; then
\[
 \mathbf p^{(i)}=(p_1,\ldots,i,\ldots,p_\ell,\underbrace{p,\ldots,p}_{N-\ell+1}))
\]
agrees with $\mathbf p$ in the last $N-\ell+1$ entries and has the smaller $(\ell+1)$-th entry $p_\ell<p$. Thus every list occurring in \eqref{eq:split} is either empty or a tuple that is colexicographically smaller than $\mathbf p$. Put $\gamma'=\gamma-(p-1)$ and $\Phi'=\Phi/p!$. These are the values of $\gamma$ and $\Phi$ for the tuple $(p_1,\ldots,p_{N-1})$, while for $\mathbf p^{(i)}$ they are $\gamma'+i-1$ and $\Phi'\,i!$, and the entries of $\mathbf p^{(i)}$ sum to at most $\bar p-p+i$.

Suppose first that $N\ge2$. Then all these lists are non-empty, and the induction hypothesis applies to each sum in \eqref{eq:split}, with $\lambda$ replaced by $p!\,\lambda$ and by $(p-i)!\,\lambda$, respectively. Up to remainders, the first sum contributes
\[
 \frac{(-1)^{\gamma'+1}s(M+1,1)}{\Phi'\,p!\,\lambda},
\]
and the $i$th sum contributes
\[
 \frac{(-1)^{\gamma'+i}s(M+1,1)}{\Phi'\,i!\,(p-i)!\,\lambda}.
\]
These remainders are $\mathcal B$-remainders with $\mathcal B\subset\{1,\ldots,\bar p!\}$, because $(\bar p-p)!\,p!\le\bar p!$ and $(\bar p-p+i)!\,(p-i)!\le\bar p!$.

If $N=1$, then $Q_j=1$, and the first sum in \eqref{eq:split} and the sum with $i=1$ are evaluated by \eqref{eq:stirling-poly}. Their terms $-s(M+1,2)$ cancel, their terms containing falling factorials form a $\{(p-1)!,p!\}$-remainder, and their terms containing $s(M+1,1)$ are as in the previous case with $\gamma'=0$ and $\Phi'=1$. The sums with $2\le i\le p-1$ correspond to the one-entry tuples $(i)$ and are handled by the induction hypothesis as before.

In both cases the coefficient of $s(M+1,1)$ equals
\begin{align*}
 \frac{(-1)^{\gamma'+1}}{\Phi'\,p!\,\lambda}
 -\sum_{i=1}^{p-1}\frac{(-1)^{\gamma'+i}}{\Phi'\,i!\,(p-i)!\,\lambda}
 &=\frac{(-1)^{\gamma'+1}}{\Phi\,\lambda}
 \left(1+\sum_{i=1}^{p-1}(-1)^i\binom pi\right)\\
 &=\frac{(-1)^{\gamma'+1}(-1)^{p+1}}{\Phi\,\lambda}
 =\frac{(-1)^{\gamma+1}}{\Phi\,\lambda},
\end{align*}
since $\gamma'+p=\gamma+1$. This proves \eqref{eq:claim-general}.
\end{proof}

\begin{proposition}\label{prop:stirling-sum}
Let $p_1,\ldots,p_s$ be positive integers, not all equal to one, let $r=p_1+\cdots+p_s$, and let $M\ge r!$. Then
\[
 \sum_{j=3}^{M+1}s(M+1,j)\,c_{j-2}(p_1,\ldots,p_s)
 =(-1)^{r-s+M+1}\frac{M!}{p_1!\cdots p_s!}.
\]
\end{proposition}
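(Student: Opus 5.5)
The plan is to apply Claim~\ref{claim:general} at $\lambda=1$ after reducing to a tuple of entries greater than one. By \eqref{eq:c-multiplicative}, $c_{j-2}(p_1,\ldots,p_s)=c_{j-2}(p_1)\cdots c_{j-2}(p_s)$. Since $c_m(1)=1$, every factor with $p_a=1$ can be discarded. Let $\mathbf p=(q_1,\ldots,q_N)$ be the nondecreasing rearrangement of those $p_a$ that exceed one; by hypothesis $N\ge1$. Then $\bar p=\sum q_b\le r$, $\Phi=\prod q_b!=p_1!\cdots p_s!$, and $\gamma=\sum(q_b-1)=\sum_a(p_a-1)=r-s$, because parts equal to one contribute zero.

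Applying \eqref{eq:claim-general} with $\lambda=1$ gives
\[
 \sum_{j=3}^{M+1}s(M+1,j)\,c_{j-2}(p_1,\ldots,p_s)=\frac{(-1)^{r-s+1}s(M+1,1)}{p_1!\cdots p_s!}+R(1),
\]
where $R$ is a $\mathcal B$-remainder with $\mathcal B\subset\{1,\ldots,\bar p!\}\subset\{1,\ldots,r!\}$. Since $M\ge r!\ge\max\mathcal B$, the vanishing property \eqref{eq:remainder-vanishes} gives $R(1)=0$.

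It remains to evaluate $s(M+1,1)$. This is the coefficient of $\lambda$ in $\ff{\lambda}{M+1}=\lambda(\lambda-1)\cdots(\lambda-M)$, namely $(-1)^M M!$. Substituting yields $(-1)^{r-s+M+1}M!/(p_1!\cdots p_s!)$, which is the assertion.

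None of these steps is a real obstacle, since the work is in Claim~\ref{claim:general}. The points to check are that $\gamma$ and $\Phi$ do not change when the ones are dropped, and that the range bound $\mathcal B\subset\{1,\ldots,\bar p!\}$ is at most $r!\le M$, so that \eqref{eq:remainder-vanishes} applies.
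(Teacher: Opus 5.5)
Your proposal is correct and is essentially the paper's own proof: you drop the parts equal to one via \eqref{eq:c-multiplicative} and $c_m(1)=1$, and apply Claim~\ref{claim:general} at $\lambda=1$ with $\gamma=r-s$ and $\Phi=p_1!\cdots p_s!$. You then make the remainder vanish using $\bar p!\le r!\le M$ and insert $s(M+1,1)=(-1)^MM!$; the hypothesis $M\ge2$ of Claim~\ref{claim:general}, which you leave implicit, holds because some $p_a\ge2$ gives $M\ge r!\ge2$.
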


\begin{proof}
By \eqref{eq:c-multiplicative} and $c_{j-2}(1)=1$, the product $c_{j-2}(p_1)\cdots c_{j-2}(p_s)$ coincides with the product in Claim~\ref{claim:general} for the tuple $\mathbf p$ formed by those of the numbers $p_1,\ldots,p_s$ that exceed one, arranged in nondecreasing order; this tuple is non-empty by assumption. For $\mathbf p$ we have $\bar p\le r$, so that $\bar p!\le r!\le M$, while $\gamma=\sum_{a=1}^s(p_a-1)=r-s$ and $\Phi=p_1!\cdots p_s!$. We apply Claim~\ref{claim:general} with $\lambda=1$. The remainder vanishes by \eqref{eq:remainder-vanishes}, and it remains to note that $s(M+1,1)=(-1)^MM!$.
\end{proof}

\begin{corollary}\label{cor:alternating}
In the setting of Proposition~\ref{prop:stirling-sum},
\[
 \sum_{j=2}^{M}\frac{(-1)^{j+1}}{j}\,\widetilde c_{j-1}(p_1,\ldots,p_s)
 =\frac{(-1)^{r-s}}{p_1!\cdots p_s!}.
\]
\end{corollary}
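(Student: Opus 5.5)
The plan is to combine the identity \eqref{eq:alternating-stirling} of Lemma~\ref{lem:stirling} with Proposition~\ref{prop:stirling-sum}. We work with the fixed type $[p_1,\ldots,p_s]$, which has a part exceeding one, and with $M\ge r!$, where $r=p_1+\cdots+p_s$. Since $r\ge2$, we have $M\ge2$, so \eqref{eq:alternating-stirling} applies. It expresses the left-hand side of the corollary as
\[
 \frac{(-1)^{M-1}}{M!}\sum_{j=3}^{M+1}s(M+1,j)\,c_{j-2}(p_1,\ldots,p_s).
\]
The only point to check here is that the $c_m$ and $\widetilde c_m$ in Lemma~\ref{lem:stirling} are the numbers attached to this type. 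That lemma was stated exactly for a fixed type with a part exceeding one, so this is immediate.

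Next we substitute the value of the Stirling sum given by Proposition~\ref{prop:stirling-sum}, whose hypotheses (not all $p_a$ equal to one, and $M\ge r!$) are precisely the ones in force. This gives
\[
 \frac{(-1)^{M-1}}{M!}\cdot(-1)^{r-s+M+1}\frac{M!}{p_1!\cdots p_s!}
 =\frac{(-1)^{r-s+2M}}{p_1!\cdots p_s!}
 =\frac{(-1)^{r-s}}{p_1!\cdots p_s!}.
\]
The factors $M!$ cancel, and the sign is simplified using $(-1)^{2M}=1$.

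There is essentially no obstacle: all the combinatorial work sits in Claim~\ref{claim:general} and Proposition~\ref{prop:stirling-sum}. The only care needed is the sign bookkeeping in the exponents and confirming $M\ge2$. It is also worth remarking afterwards that the left-hand side is independent of $M$ once $M\ge r!$. This is consistent with $\widetilde c_{j-1}=0$ for $j-1>r!-1$, since there are only $r!-1$ non-identity permutations. That consistency is the form in which the corollary will feed the inclusion--exclusion count in the proof of Lemma~\ref{lem:braid-valuation}.
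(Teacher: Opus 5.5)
Your proof is correct and is the same as the paper's, which derives the corollary in one line by inserting Proposition~\ref{prop:stirling-sum} into \eqref{eq:alternating-stirling}. Your sign computation $(-1)^{M-1}(-1)^{r-s+M+1}=(-1)^{r-s}$ and the check $M\ge r!\ge 2$ are exactly the details that one-line argument leaves implicit.
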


\begin{proof}
This follows from \eqref{eq:alternating-stirling} and Proposition~\ref{prop:stirling-sum}.
\end{proof}

\subsection{Proof of Lemma~\ref{lem:braid-valuation}}\label{ssec:proof-lemma}

We keep the notation of Section~\ref{ssec:chambers}. All intersections of chambers are non-empty, so the inclusion-exclusion principle \eqref{eq:inclusion-exclusion} gives
\begin{equation}\label{eq:ie-chambers}
 \phi(P)=\sum_{\varnothing\neq I\subset\mathfrak S_r}
 (-1)^{|I|+1}\phi\Bigl(\bigcap_{\sigma\in I}K_\sigma\Bigr).
\end{equation}
Fix a type $[p_1,\ldots,p_s]$ with at least one $p_a>1$. Sets $I$ with $|I|=1$ give chambers. Among the sets $I$ with $|I|=j\ge2$, exactly $(r!/j)\,\widetilde c_{j-1}(p_1,\ldots,p_s)$ have an intersection of this type. Indeed, choose a distinguished chamber $K_\sigma$ in $I$; relabelling the coordinates, we may assume that $\sigma=\mathrm{id}$. The remaining $j-1$ chambers can then be chosen in $\widetilde c_{j-1}(p_1,\ldots,p_s)$ ways, and every set is obtained $j$ times. Since $\widetilde c_{j-1}=0$ for $j>r!$, Corollary~\ref{cor:alternating} with $M=r!$ shows that the total coefficient in \eqref{eq:ie-chambers} of all intersections of type $[p_1,\ldots,p_s]$ equals
\[
 r!\sum_{j=2}^{r!}\frac{(-1)^{j+1}}{j}\,\widetilde c_{j-1}(p_1,\ldots,p_s)
 =(-1)^{r-s}\frac{r!}{p_1!\cdots p_s!}.
\]
Relabelling the coordinates permutes the intersections of a given type transitively, so all $r!/(p_1!\cdots p_s!)$ of them have the same coefficient, which is therefore $(-1)^{r-s}$. The chambers, that is, the intersections of type $[1,\ldots,1]$, occur only for $|I|=1$ and have coefficient $1$, in agreement with the same rule.

Finally, merging consecutive groups of the vectors $y_1,\ldots,y_r$ amounts to merging blocks of $\pi$. Hence the intersections of type $[p_1,\ldots,p_s]$ are exactly the simplices $K_\tau$ with $\tau\in\Ord(\rho)$ for the partitions $\rho\ge\pi$ with $|\rho|=s$. Substituting the coefficients $(-1)^{r-s}=(-1)^{|\pi|-|\rho|}$ into \eqref{eq:ie-chambers} proves \eqref{eq:braid-valuation}.
\qed

\subsection{Partition inversion}\label{ssec:partition-inversion}

The M\"obius function of the partition lattice is given by
\begin{equation}\label{eq:mobius-partition}
 \mu(\pi_0,\rho)=(-1)^{n-|\rho|}\prod_{B\in\rho}(|B|-1)!;
\end{equation}
see \cite[Chapter~3]{Stanley} or \cite{Aigner}.

\begin{proposition}[Permutation-averaging identity]\label{prop:permutation-valuation}
Let $x_1,\ldots,x_n$ be linearly independent, and let $\phi$ be a valuation on convex polytopes. Then
\begin{align}\label{eq:permutation-valuation}
 &\sum_{\sigma\in\mathfrak S_n}
 \phi\bigl(\conv(0,x_{\sigma(1)},x_{\sigma(1)}+x_{\sigma(2)},\ldots,x_1+\cdots+x_n)\bigr)\\
 &\qquad=\sum_{\rho\in\Pi_n}
 \left(\prod_{B\in\rho}(|B|-1)!\right)\phi(P_\rho).\notag
\end{align}
\end{proposition}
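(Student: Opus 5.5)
We apply Möbius inversion on $\Pi_n$ to Lemma~\ref{lem:braid-valuation}. For $\rho\in\Pi_n$ put
\[
 g(\rho)=\sum_{\tau\in\Ord(\rho)}\phi(K_\tau),\qquad f(\pi)=(-1)^{|\pi|}\phi(P_\pi).
\]
Multiplying \eqref{eq:braid-valuation} by $(-1)^{|\pi|}$, the lemma says that for every $\pi\in\Pi_n$
\[
 f(\pi)=\sum_{\rho\ge\pi}(-1)^{|\rho|}g(\rho),
\]
so $f$ is the upward zeta-transform of $\rho\mapsto(-1)^{|\rho|}g(\rho)$. Möbius inversion in the lattice $\Pi_n$ gives
\[
 (-1)^{|\pi|}g(\pi)=\sum_{\rho\ge\pi}\mu(\pi,\rho)f(\rho).
\]
We specialise to $\pi=\pi_0$. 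The left side is $(-1)^n g(\pi_0)$. Here $g(\pi_0)$ is the sum over all orderings of the singletons, which is exactly the left-hand side of \eqref{eq:permutation-valuation}. By \eqref{eq:mobius-partition} the right side is
\[
 \sum_{\rho}(-1)^{n-|\rho|}\prod_{B\in\rho}(|B|-1)!\,(-1)^{|\rho|}\phi(P_\rho).
\]
Its overall sign is $(-1)^n$, which cancels the sign on the left, and this yields \eqref{eq:permutation-valuation}.

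Two points need checking. First, Lemma~\ref{lem:braid-valuation} must be valid for every $\pi$ and not only for $\pi_0$, because inversion uses the full system of identities. This holds: the lemma is stated for all $\pi\in\Pi_n$ under linear independence of $x_1,\ldots,x_n$, and the sets $K_\tau$ and $P_\rho$ are defined independently of $\pi$. Second, the sign bookkeeping must be right, in particular that the inversion is taken in the form $F(\pi)=\sum_{\rho\ge\pi}G(\rho)$ implies $G(\pi)=\sum_{\rho\ge\pi}\mu(\pi,\rho)F(\rho)$, with $\mu(\pi_0,\rho)$ taken from \eqref{eq:mobius-partition}.

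The argument is formal once the lemma is available, so the only real obstacle is this sign and orientation check. As a sanity test, take $n=1$: both sides equal $\phi([0,x_1])$. For $n=2$ the left side is $\phi(K_{12})+\phi(K_{21})$ and the right side is $\phi(P_{\pi_0})+\phi([0,x_1+x_2])$. These agree by inclusion-exclusion on the square, whose two chambers meet in the diagonal.
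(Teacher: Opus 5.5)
Your proposal is correct and follows the paper's proof essentially verbatim: you rewrite the interior-face formula as $(-1)^{|\pi|}\phi(P_\pi)=\sum_{\rho\ge\pi}(-1)^{|\rho|}U(\rho)$, apply M\"obius inversion on $\Pi_n$ at $\pi_0$, and cancel the signs using $\mu(\pi_0,\rho)=(-1)^{n-|\rho|}\prod_{B\in\rho}(|B|-1)!$. Your sign bookkeeping and the $n=2$ check are both right.
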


\begin{proof}
For $\rho\in\Pi_n$ set
\[
 U(\rho)=\sum_{\tau\in\Ord(\rho)}\phi(K_\tau),
 \qquad G(\rho)=\phi(P_\rho).
\]
Formula \eqref{eq:braid-valuation} is equivalent to
\[
 (-1)^{|\pi|}G(\pi)=\sum_{\rho\ge\pi}(-1)^{|\rho|}U(\rho),\qquad \pi\in\Pi_n.
\]
M\"obius inversion on $\Pi_n$ \cite[Chapter~3]{Stanley} at $\pi_0$ and \eqref{eq:mobius-partition} yield
\[
 U(\pi_0)=\sum_{\rho\in\Pi_n}
 (-1)^{n+|\rho|}\mu(\pi_0,\rho)\,G(\rho)
 =\sum_{\rho\in\Pi_n}
 \left(\prod_{B\in\rho}(|B|-1)!\right)G(\rho).
\]
Since $U(\pi_0)$ is the left-hand side of \eqref{eq:permutation-valuation}, the proof is complete.
\end{proof}

\begin{remark}\label{rem:spitzer}
Exactly $\prod_{B\in\rho}(|B|-1)!$ permutations of $[n]$ have the blocks of $\rho$ as their cycles. Hence the right-hand side of \eqref{eq:permutation-valuation} equals $\sum_{\sigma}\phi(P_{\rho(\sigma)})$, where $\rho(\sigma)$ is the partition of $[n]$ into the cycles of $\sigma$. Identities of this type go back to the combinatorial lemmas of Spitzer \cite{Spitzer}, Baxter \cite{Baxter} and Barndorff-Nielsen and Baxter \cite{BarndorffBaxter}; see also \cite{Steele} and \cite[Section~5]{VysotskyZaporozhets}. For example, for a fixed vector $u$ the support function $\phi(K)=h(K,u)$ is a valuation, and \eqref{eq:permutation-valuation} becomes
\[
 \sum_{\sigma\in\mathfrak S_n}\max_{0\le j\le n}\ip{x_{\sigma(1)}+\cdots+x_{\sigma(j)}}{u}
 =\sum_{\sigma\in\mathfrak S_n}\sum_{c}\max\{0,\ip{x_c}u\},
\]
where $c$ runs over the cycles of $\sigma$ and $x_c=\sum_{i\in c}x_i$. This is the summed form of Spitzer's combinatorial lemma \cite{Spitzer}.
\end{remark}

We also need a weighted partition identity. It is classical: by the first sentence of Remark~\ref{rem:spitzer}, its left-hand side counts the permutations of an $N$-element set according to their cycles. We include the inductive proof from \cite[Section~7]{BolotinThesis} to keep the exposition self-contained. We use the convention that $\Pi_0$ consists of the empty partition, whose weight is the empty product.

\begin{lemma}\label{lem:partition-weight}
For every $N\ge0$,
\begin{equation}\label{eq:partition-weight}
 \sum_{\rho\in\Pi_N}\prod_{B\in\rho}(|B|-1)!=N!.
\end{equation}
\end{lemma}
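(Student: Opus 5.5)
We prove \eqref{eq:partition-weight} by induction on $N$, conditioning on the block that contains the last element. This is the inductive route the paper says it will follow. The cases $N=0$ and $N=1$ are immediate: $\Pi_0$ consists of the empty partition, of weight $1=0!$, and $\Pi_1$ consists of a single partition with one block of size one, of weight $0!=1$.

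For the inductive step, let $N\ge1$ and assume \eqref{eq:partition-weight} holds for all smaller values. Every $\rho\in\Pi_N$ is determined uniquely by two pieces of data:
\begin{itemize}
\item the block $B\ni N$, which has the form $B=\{N\}\cup A$ with $A\subset[N-1]$ and $|A|=k$, $0\le k\le N-1$;
\item a partition $\rho'$ of the complement $[N-1]\setminus A$, which has $N-1-k$ elements.
\end{itemize}
The weight is multiplicative over blocks, so
\[
 \prod_{B'\in\rho}(|B'|-1)! = k!\prod_{B'\in\rho'}(|B'|-1)!.
\]
Summing first over $\rho'$ and applying the induction hypothesis to the $(N-1-k)$-element set $[N-1]\setminus A$ (relabelled as $[N-1-k]$) gives the contribution $k!\,(N-1-k)!$ for each fixed $A$. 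Summing over the $\binom{N-1}{k}$ choices of $A$ with $|A|=k$ then yields
\[
 \sum_{\rho\in\Pi_N}\prod_{B\in\rho}(|B|-1)!
 =\sum_{k=0}^{N-1}\binom{N-1}{k}\,k!\,(N-1-k)!
 =\sum_{k=0}^{N-1}(N-1)!
 =N\cdot(N-1)!=N!.
\]

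Nothing here is a real obstacle. The only points needing care are the bookkeeping conventions: $\Pi_0$ must be used when $A=[N-1]$ (the case $k=N-1$), and the weight of a block must be $(|B|-1)!$, so that the block containing $N$ contributes $k!$ rather than $(k+1)!$.

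As a cross-check one can also argue bijectively, as noted after Remark~\ref{rem:spitzer}. Each block $B$ supports exactly $(|B|-1)!$ cyclic orders, so the left-hand side of \eqref{eq:partition-weight} counts pairs consisting of a partition of $[N]$ together with a cyclic order on each of its blocks. Such pairs correspond bijectively to permutations of $[N]$ via their cycle decomposition, and there are $N!$ of these.
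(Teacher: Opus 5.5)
Your proof is correct and is essentially the paper's argument: induction on $N$, conditioning on the size of the block containing a fixed element, so that each size class contributes $\binom{N-1}{j-1}(j-1)!\,(N-j)!=(N-1)!$. Your bijective cross-check via cycle decompositions is the classical argument that the paper points to just before the lemma.
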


\begin{proof}
For $N=0$ the only partition is the empty one, whose weight is $1=0!$, and the case $N=1$ is immediate. Let $N\ge2$, and suppose that the identity holds for all smaller sets. Fix an element $x\in[N]$. For $1\le j\le N$, let $\Pi'_{N,j}$ be the set of partitions in which the block containing $x$ has size $j$; these sets partition $\Pi_N$. The other $j-1$ elements of the block containing $x$ can be chosen in $\binom{N-1}{j-1}$ ways, this block contributes the factor $(j-1)!$, and the remaining $N-j$ elements form an arbitrary partition of a set of size $N-j$. By the induction hypothesis,
\[
 \sum_{\rho\in\Pi'_{N,j}}\prod_{B\in\rho}(|B|-1)!
 =\binom{N-1}{j-1}(j-1)!\,(N-j)!
 =(N-1)!.
\]
Summing over $j=1,\ldots,N$, we obtain \eqref{eq:partition-weight}.
\end{proof}

%======================================================================
\section{Proof of the discrete theorems}\label{sec:discrete-proof}

All identities in this section are between non-negative quantities and are understood in $[0,\infty]$.

\subsection{A sufficient condition for finiteness}

Suppose that $m_i\le n_i$ for all $i$; otherwise both sides of \eqref{eq:exchangeable-main} vanish. Since $S_{j,i}\in\sum_{\ell=1}^{n_i}\seg{\xi_{\ell,i}}$ for all $j$, each hull $C_i$ is contained in the zonotope $\sum_{\ell=1}^{n_i}\seg{\xi_{\ell,i}}$. By monotonicity and multilinearity, $V_d(C_1[m_1],\ldots,C_k[m_k])$ is bounded by the sum of the mixed volumes of the segments $\seg{\xi_{\ell_{r,i},i}}$, $1\le r\le m_i$, $1\le i\le k$, over all choices of indices $\ell_{r,i}\in\{1,\ldots,n_i\}$. Terms in which an increment appears twice vanish, because two of the segments are then parallel. The remaining terms are at most $\frac1{d!}\prod_{i}\prod_r\|\xi_{\ell_{r,i},i}\|$ by \eqref{eq:segments-determinant} and Hadamard's inequality. Taking expectations and using exchangeability within the sequences and independence between them, we obtain
\begin{equation}\label{eq:finiteness}
 \E V_d\bigl(C_1[m_1],\ldots,C_k[m_k]\bigr)
 \le\frac1{d!}\prod_{i=1}^k\frac{n_i!}{(n_i-m_i)!}\,
 \E\bigl(\|\xi_{1,i}\|\cdots\|\xi_{m_i,i}\|\bigr).
\end{equation}
In particular, both sides of \eqref{eq:exchangeable-main} are finite if $\E(\|\xi_{1,i}\|\cdots\|\xi_{m_i,i}\|)<\infty$ for every $i$, for example, if the increments are independent and integrable.

\subsection{Replacing one hull by block segments}

\begin{proposition}\label{prop:one-walk}
Let $\xi_1,\ldots,\xi_n$ be a finite exchangeable sequence of random vectors in $\R^q$ that are linearly independent almost surely; in particular, $q\ge n$. Put $S_j=\xi_1+\cdots+\xi_j$ and $C=\conv(0,S_1,\ldots,S_n)$. Let $1\le m\le q$, and let $L_1,\ldots,L_{q-m}$ be random compact convex sets independent of $(\xi_1,\ldots,\xi_n)$. For $\mathbf j=(j_1,\ldots,j_m)\in\J_m(n)$ set $a_0=0$, $a_r=j_1+\cdots+j_r$, and
\[
 Y_r(\mathbf j)=\sum_{\ell=a_{r-1}+1}^{a_r}\xi_\ell,
 \qquad r=1,\ldots,m.
\]
Then
\begin{align}\label{eq:one-walk}
 &\E V_q\bigl(L_1,\ldots,L_{q-m},C[m]\bigr)\\
 &\quad=\sum_{\mathbf j\in\J_m(n)}
 \frac{\E V_q\bigl(L_1,\ldots,L_{q-m},
 \seg{Y_1(\mathbf j)},\ldots,\seg{Y_m(\mathbf j)}\bigr)}{j_1\cdots j_m}.\notag
\end{align}
\end{proposition}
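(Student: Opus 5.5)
Condition on $L_1,\ldots,L_{q-m}$ and work with the functional $\phi(K)=V_q(L_1,\ldots,L_{q-m},K[m])$. By \eqref{eq:diagonal-valuation} it is a translation-invariant valuation. Apply Proposition~\ref{prop:permutation-valuation} to the linearly independent vectors $x_\ell=\xi_\ell$. This gives, almost surely,
\[
 \sum_{\sigma\in\mathfrak S_n}\phi\bigl(\conv(0,\xi_{\sigma(1)},\ldots,\xi_{\sigma(1)}+\cdots+\xi_{\sigma(n)})\bigr)
 =\sum_{\rho\in\Pi_n}\Bigl(\prod_{B\in\rho}(|B|-1)!\Bigr)\phi(P_\rho).
\]
Then take expectations. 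By exchangeability and independence of the $L$'s from the $\xi$'s, every term on the left has expectation $\E\phi(C)$, so the left side becomes $n!\,\E\phi(C)$. Everything is non-negative, so all manipulations are valid in $[0,\infty]$.

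Next expand $\phi(P_\rho)$ for a partition $\rho$ with blocks $B_1,\ldots,B_s$. By multilinearity of mixed volumes in the $m$ copies of $P_\rho=\sum_b\seg{x_{B_b}}$,
\[
 \phi(P_\rho)=\sum_{(b_1,\ldots,b_m)} V_q\bigl(L_1,\ldots,L_{q-m},\seg{x_{B_{b_1}}},\ldots,\seg{x_{B_{b_m}}}\bigr).
\]
Terms with a repeated index vanish, since two parallel segments (indeed any repeated segment, as segments are one-dimensional) give zero mixed volume. What remains is a sum over ordered choices of $m$ distinct blocks $(D_1,\ldots,D_m)$ of $\rho$. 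By exchangeability, the expectation of such a term depends only on the sizes $j_r=|D_r|$. It equals $\E V_q(L,\seg{Y_1(\mathbf j)},\ldots,\seg{Y_m(\mathbf j)})$, because a permutation carries $D_1,\ldots,D_m$ to the consecutive intervals defining the block sums. Note that $\mathbf j\in\J_m(n)$.

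The remaining step is counting. Fix $\mathbf j\in\J_m(n)$. Sum the weights $\prod_{B\in\rho}(|B|-1)!$ over pairs consisting of a partition $\rho$ and an ordered $m$-tuple of its distinct blocks with sizes $j_1,\ldots,j_m$. We do this as follows.
\begin{itemize}
\item Choose the blocks $D_1,\ldots,D_m$ as disjoint labelled sets of the given sizes; there are $n!/(j_1!\cdots j_m!\,(n-a_m)!)$ ways.
\item These blocks contribute the weight $\prod_r(j_r-1)!$.
\item The rest of $\rho$ is an arbitrary partition of the remaining $n-a_m$ elements. By Lemma~\ref{lem:partition-weight}, its weights sum to $(n-a_m)!$.
\end{itemize}
The total is $n!/(j_1\cdots j_m)$. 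Dividing by $n!$ gives \eqref{eq:one-walk}.

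The main obstacle is ensuring the valuation identity applies. Proposition~\ref{prop:permutation-valuation} requires linear independence, which holds almost surely by hypothesis. It also requires $\phi$ to be a valuation on polytopes in $\R^q$ for fixed (conditioned) $L_i$, which holds by \eqref{eq:diagonal-valuation}. Finally, the interchange of expectation and sums must be justified; this holds by non-negativity (Tonelli), with conditioning on the $L_i$ using their independence from the $\xi$'s.
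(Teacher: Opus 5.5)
Your proposal is correct and follows essentially the same route as the paper's proof. You apply the permutation-averaging identity to $\phi(K)=V_q(L_1,\ldots,L_{q-m},K[m])$, use exchangeability to turn the left side into $n!\,\E\phi(C)$, and expand $\phi(P_\rho)$ multilinearly over ordered tuples of distinct blocks. The weighted count $n!/(j_1\cdots j_m)$ then comes from Lemma~\ref{lem:partition-weight}.
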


\begin{proof}
For fixed realizations of $L_1,\ldots,L_{q-m}$ and of the increments, the functional
\[
 \phi(K)=V_q\bigl(L_1,\ldots,L_{q-m},K[m]\bigr)
\]
is a valuation by \eqref{eq:diagonal-valuation}. Apply Proposition~\ref{prop:permutation-valuation} with $x_j=\xi_j$ and take expectations. By exchangeability, every path simplex on the left-hand side of \eqref{eq:permutation-valuation} has, jointly with $(L_1,\ldots,L_{q-m})$, the same distribution as $C$. Hence
\begin{equation}\label{eq:one-walk-partitions}
 n!\,\E\phi(C)
 =\sum_{\rho\in\Pi_n}w(\rho)\,\E\phi(P_\rho),
 \qquad
 w(\rho)=\prod_{B\in\rho}(|B|-1)!.
\end{equation}
By Minkowski multilinearity,
\begin{equation}\label{eq:expand-zonotope}
 \phi(P_\rho)
 =\sum_{(B_1,\ldots,B_m)}
 V_q\bigl(L_1,\ldots,L_{q-m},
 \seg{\xi_{B_1}},\ldots,\seg{\xi_{B_m}}\bigr),
\end{equation}
where $\xi_B=\sum_{\ell\in B}\xi_\ell$ and the sum runs over the ordered $m$-tuples of distinct blocks of $\rho$; terms with a repeated block vanish because two of the segments would coincide.

Fix positive integers $j_1,\ldots,j_m$ with $J=j_1+\cdots+j_m\le n$. There are $n!/(j_1!\cdots j_m!\,(n-J)!)$ ordered $m$-tuples $(B_1,\ldots,B_m)$ of disjoint subsets of $[n]$ with $|B_r|=j_r$. For each of them, by Lemma~\ref{lem:partition-weight}, the partitions $\rho$ containing $B_1,\ldots,B_m$ as blocks have total weight
\[
 \prod_{r=1}^m(j_r-1)!\sum_{\rho'\in\Pi_{n-J}}w(\rho')=\prod_{r=1}^m(j_r-1)!\,(n-J)!.
\]
By exchangeability, the vector $(\xi_{B_1},\ldots,\xi_{B_m})$ has the same distribution as $(Y_1(\mathbf j),\ldots,Y_m(\mathbf j))$, and both vectors are independent of $L_1,\ldots,L_{q-m}$. Consequently, the terms with block sizes $j_1,\ldots,j_m$ contribute to the right-hand side of \eqref{eq:one-walk-partitions}
\begin{multline*}
 \frac{n!}{j_1!\cdots j_m!\,(n-J)!}\prod_{r=1}^m(j_r-1)!\,(n-J)!\\
 \times\E V_q\bigl(L_1,\ldots,L_{q-m},\seg{Y_1(\mathbf j)},\ldots,\seg{Y_m(\mathbf j)}\bigr),
\end{multline*}
that is, $n!/(j_1\cdots j_m)$ times this expectation. Summing over $\mathbf j\in\J_m(n)$ and dividing by $n!$, we obtain \eqref{eq:one-walk}.
\end{proof}

\subsection{Proof of Theorem~\ref{thm:exchangeable} in general position}

Suppose first that, for every $i$, the increments $\xi_{1,i},\ldots,\xi_{n_i,i}$ are linearly independent almost surely; in particular, $n_i\le d$. We may assume that $m_i\ge1$ for all $i$, since families with $m_i=0$ do not enter \eqref{eq:exchangeable-main}. Apply Proposition~\ref{prop:one-walk} with $q=d$ to the hull $C_k$ taken with multiplicity $m_k$, the sets $L$ being $C_1[m_1],\ldots,C_{k-1}[m_{k-1}]$; they are independent of $\boldsymbol\xi_k$. This replaces $C_k[m_k]$ by the segments $\seg{Y^{(1,k)}_{\mathbf j_k}},\ldots,\seg{Y^{(m_k,k)}_{\mathbf j_k}}$. Next apply Proposition~\ref{prop:one-walk} to $C_{k-1}[m_{k-1}]$. Now the sets $L$ are $C_1,\ldots,C_{k-2}$, taken with their multiplicities, and the segments built from $\boldsymbol\xi_k$; all of them are independent of $\boldsymbol\xi_{k-1}$. Continuing in this way, we obtain
\begin{align}\label{eq:all-segments}
 &\E V_d\bigl(C_1[m_1],\ldots,C_k[m_k]\bigr)\\
 &\quad=
 \sum_{\substack{\mathbf j_i\in\J_{m_i}(n_i)\\i=1,\ldots,k}}
 \frac{
 \E V_d\bigl(
 \seg{Y_{\mathbf j_1}^{(1,1)}},\ldots,
 \seg{Y_{\mathbf j_k}^{(m_k,k)}}
 \bigr)}
 {\displaystyle\prod_{i=1}^k\prod_{r=1}^{m_i}j_{r,i}}.\notag
\end{align}
Formula \eqref{eq:segments-determinant} turns \eqref{eq:all-segments} into \eqref{eq:exchangeable-main}.

\subsection{Removing the genericity assumption}\label{ssec:lifting}

It remains to remove the assumption that the increments are linearly independent almost surely; note that this assumption also forces $n_i\le d$. We use a Gaussian lifting. Let
\[
 n_*=\max_{1\le i\le k}n_i,\qquad q=d+n_*,
\]
and identify $\R^q$ with $\R^d\times\R^{n_*}$. For every increment $\xi_{r,i}$ take a standard Gaussian vector $g_{r,i}$ in $\R^{n_*}$, all these vectors being independent of each other and of the increments, and put
\[
 \widetilde\xi_{r,i}=(\xi_{r,i},g_{r,i})\in\R^q.
\]
For each $i$, the sequence $(\widetilde\xi_{1,i},\ldots,\widetilde\xi_{n_i,i})$ is exchangeable, and its vectors are linearly independent almost surely because $n_i\le n_*$. The $k$ lifted sequences are independent. Let $\widetilde C_i$ be the convex hull of the partial sums of the $i$th lifted sequence, so that $\widetilde C_i|E=C_i$, where $E=\R^d\times\{0\}$. Finally, let $f_1,\ldots,f_{n_*}$ be the standard basis of $\{0\}\times\R^{n_*}$, and regard each segment $\seg{f_s}$ as the convex hull of a deterministic walk with one step.

Apply the formula proved in the previous subsection, in dimension $q$, to
\[
 \widetilde C_1[m_1],\ldots,\widetilde C_k[m_k],
 \seg{f_1},\ldots,\seg{f_{n_*}}.
\]
By \eqref{eq:iterated-projection}, the left-hand side equals
\begin{equation}\label{eq:lift-left}
 \frac{d!}{q!}\,
 \E V_d\bigl(C_1[m_1],\ldots,C_k[m_k]\bigr).
\end{equation}
On the right-hand side, each one-step walk contributes only the index $j=1$, and the block triangular structure of the matrix gives
\begin{equation}\label{eq:lift-det}
 \det\begin{bmatrix}
 \widetilde Y_{\mathbf j_1}^{(1,1)}&\cdots&\widetilde Y_{\mathbf j_k}^{(m_k,k)}&f_1&\cdots&f_{n_*}
 \end{bmatrix}
 =\det\begin{bmatrix}
 Y_{\mathbf j_1}^{(1,1)}&\cdots&Y_{\mathbf j_k}^{(m_k,k)}
 \end{bmatrix},
\end{equation}
where $\widetilde Y$ denotes the block sums of the lifted sequences. The formula in dimension $q$ carries the prefactor $1/q!$. Comparing it with \eqref{eq:lift-left} and using \eqref{eq:lift-det}, we obtain \eqref{eq:exchangeable-main} with the prefactor $1/d!$. This proves Theorem~\ref{thm:exchangeable} in full generality.

\subsection{Independent increments}

If the increments within each sequence are i.i.d., then the block sums in \eqref{eq:block-sums} are independent and satisfy
\[
 Y_{\mathbf j_i}^{(r,i)}\stackrel d=S^{(r,i)}_{j_{r,i}}.
\]
This proves Corollary~\ref{thm:discrete}. The finiteness of all quantities in \eqref{eq:discrete-main} follows from \eqref{eq:finiteness}, or directly from Hadamard's inequality and the finite first moments of the mutually independent columns. Corollary~\ref{cor:discrete-minkowski} follows from Corollary~\ref{thm:discrete} and \eqref{eq:minkowski-polynomial}.

\subsection{Intrinsic volumes}\label{ssec:intrinsic}

Proposition~\ref{prop:one-walk} with balls in place of the sets $L_1,\ldots,L_{q-m}$ yields the known formulas for the expected intrinsic volumes of convex hulls of random walks. Recall that $B^d$ is the Euclidean unit ball in $\R^d$ and $\kappa_d=\Vol_d(B^d)$. The intrinsic volumes $\IV_0(K),\ldots,\IV_d(K)$ of a set $K\in\mathcal K^d$ are defined by the Steiner formula
\[
 \Vol_d(K+\lambda B^d)=\sum_{m=0}^{d}\kappa_{d-m}\,\IV_m(K)\,\lambda^{d-m},
 \qquad\lambda\ge0;
\]
see \cite[Chapter~4]{Schneider}. They do not depend on the dimension of the ambient space, and $\IV_m(K)$ is the $m$-dimensional volume of $K$ if $K$ is contained in an $m$-dimensional affine subspace. Comparing the Steiner formula with \eqref{eq:minkowski-polynomial} for $K_1=K$ and $K_2=B^d$, we obtain
\begin{equation}\label{eq:intrinsic-mixed}
 V_d\bigl(K[m],B^d[d-m]\bigr)=\frac{\kappa_{d-m}}{\binom dm}\,\IV_m(K),
 \qquad K\in\mathcal K^d,\ 0\le m\le d.
\end{equation}
For $y_1,\ldots,y_m\in\R^d$ let
\[
 \|y_1\wedge\cdots\wedge y_m\|=\bigl(\det[\ip{y_a}{y_b}]_{a,b=1}^m\bigr)^{1/2}
\]
be the $m$-dimensional volume of the parallelotope $P=\seg{y_1}+\cdots+\seg{y_m}$; for $m=d$ it equals $|\det[y_1,\ldots,y_d]|$. If we expand $V_d(P[m],B^d[d-m])$ by multilinearity, the terms containing one of the segments twice vanish, and the remaining $m!$ terms coincide by symmetry. Since $\IV_m(P)=\|y_1\wedge\cdots\wedge y_m\|$, formula \eqref{eq:intrinsic-mixed} gives
\begin{equation}\label{eq:segments-ball}
 V_d\bigl(\seg{y_1},\ldots,\seg{y_m},B^d[d-m]\bigr)
 =\frac{\kappa_{d-m}}{m!\binom dm}\,\|y_1\wedge\cdots\wedge y_m\|.
\end{equation}

\begin{corollary}[Expected intrinsic volumes]\label{cor:intrinsic}
Let $\xi_1,\ldots,\xi_n$ be a finite exchangeable sequence of random vectors in $\R^d$, let $S_j=\xi_1+\cdots+\xi_j$ and $C=\conv(0,S_1,\ldots,S_n)$, and define the block sums $Y_r(\mathbf j)$ as in Proposition~\ref{prop:one-walk}. Then, for $1\le m\le d$,
\begin{equation}\label{eq:intrinsic-exchangeable}
 \E\IV_m(C)=\frac1{m!}\sum_{\mathbf j\in\J_m(n)}
 \frac{\E\|Y_1(\mathbf j)\wedge\cdots\wedge Y_m(\mathbf j)\|}{j_1\cdots j_m}.
\end{equation}
If, in addition, the increments are i.i.d.\ and integrable, then all quantities are finite and
\begin{equation}\label{eq:intrinsic-iid}
 \E\IV_m(C)=\frac1{m!}\sum_{\mathbf j\in\J_m(n)}
 \frac{\E\|S^{(1)}_{j_1}\wedge\cdots\wedge S^{(m)}_{j_m}\|}{j_1\cdots j_m},
\end{equation}
where $S^{(1)},\ldots,S^{(m)}$ are independent copies of the walk.
\end{corollary}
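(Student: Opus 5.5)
The plan is to take $L_1=\cdots=L_{d-m}=B^d$ in Proposition~\ref{prop:one-walk}, convert mixed volumes to intrinsic volumes by \eqref{eq:intrinsic-mixed} and \eqref{eq:segments-ball}, and handle the genericity assumption separately.

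\textbf{General position.} First assume the increments are linearly independent almost surely (so $n\le d$). Apply Proposition~\ref{prop:one-walk} with $q=d$ and the deterministic sets $L_1=\cdots=L_{d-m}=B^d$; these are trivially independent of the increments. The left-hand side of \eqref{eq:one-walk} becomes $\E V_d(C[m],B^d[d-m])$, which by \eqref{eq:intrinsic-mixed} equals $\kappa_{d-m}\binom dm^{-1}\E\IV_m(C)$. On the right-hand side, each term $\E V_d(\seg{Y_1(\mathbf j)},\ldots,\seg{Y_m(\mathbf j)},B^d[d-m])$ equals $\kappa_{d-m}(m!\binom dm)^{-1}\E\|Y_1(\mathbf j)\wedge\cdots\wedge Y_m(\mathbf j)\|$ by \eqref{eq:segments-ball}. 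Cancelling the common factor $\kappa_{d-m}/\binom dm$ gives \eqref{eq:intrinsic-exchangeable}.

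\textbf{Removing general position.} This is the main point needing care, but it is easier here than in Section~\ref{ssec:lifting}, because intrinsic volumes do not depend on the ambient dimension. Lift each increment to $\widetilde\xi_\ell=(\xi_\ell,\varepsilon g_\ell)\in\R^{d+n}$, where the $g_\ell$ are independent standard Gaussian vectors in $\R^n$ and $\varepsilon>0$. The lifted sequence is exchangeable and almost surely linearly independent, so the general-position case in $\R^{d+n}$ applies (note $m\le d\le d+n$). As $\varepsilon\to0$, $\widetilde C\to C$ in the Hausdorff metric, so $\IV_m(\widetilde C)\to\IV_m(C)$ by continuity; similarly $\|\widetilde Y_1\wedge\cdots\wedge\widetilde Y_m\|\to\|Y_1\wedge\cdots\wedge Y_m\|$ pointwise.

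\textbf{Passing to the limit.} Pointwise convergence alone is not enough to interchange limit and expectation, since the quantities may have infinite expectation. I would rather obtain monotonicity in $\varepsilon$. Since $C$ is the projection of $\widetilde C$ onto $\R^d\times\{0\}$, we have $\IV_m(C)\le\IV_m(\widetilde C)$, because intrinsic volumes do not increase under projection. Likewise the Gram determinant is nondecreasing in $\varepsilon$: the Gram matrix of the lifted vectors is the original Gram matrix plus $\varepsilon^2$ times a positive semidefinite matrix. Projection monotonicity for $\IV_m$ alone does not give monotonicity in $\varepsilon$. However, the lifted hull for parameter $\varepsilon$ is a linear image of the one for $\varepsilon'>\varepsilon$, namely under $\mathrm{diag}(I,(\varepsilon/\varepsilon')I)$, which is a contraction. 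Intrinsic volumes are monotone under contractions (via Kubota's formula, or since the $1$-Lipschitz image argument gives $\IV_m$ monotone for linear contractions). Hence both sides of the lifted identity are nondecreasing as $\varepsilon\downarrow0$ reversed, that is, nonincreasing as $\varepsilon$ decreases. So monotone convergence does not apply directly in the decreasing direction; instead use dominated convergence when finite, or more simply use the $\varepsilon=1$ lift as a dominating function. If the right-hand side for $C$ is infinite, Fatou's lemma gives the identity in $[0,\infty]$.

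\textbf{Alternative if that is messy.} Compute both sides via Kubota or Gaussian projection: $\IV_m$ is an average of $m$-volumes of projections onto $m$-dimensional subspaces. This reduces \eqref{eq:intrinsic-exchangeable} to Theorem~\ref{thm:exchangeable} with $k=1$ and $d$ replaced by $m$, applied to the projected walk, for which no genericity is needed. The determinant of the projected block sums averages to the Gram volume by the same Kubota constant. I would actually adopt this route, since it avoids limits entirely; it uses Fubini over the Haar subspace, which is legitimate for nonnegative integrands.

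\textbf{I.i.d.\ case.} If the increments are i.i.d., the block sums are independent with $Y_r(\mathbf j)\stackrel d=S^{(r)}_{j_r}$, which gives \eqref{eq:intrinsic-iid}. Finiteness follows from Hadamard's inequality $\|y_1\wedge\cdots\wedge y_m\|\le\prod\|y_r\|$ together with independence and integrability.
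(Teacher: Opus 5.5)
Your general-position step is the paper's. The way you remove genericity is genuinely different, and the route you finally adopt (Kubota) is correct.

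\textbf{Comparison with the paper.} The paper makes the Gaussian lift \emph{exact}. It applies Proposition~\ref{prop:one-walk} in $\R^{d+n}$ to $\widetilde C$, taking as the sets $L$ the $d-m$ copies of $B^{d+n}$ together with the coordinate segments $\seg{f_1},\ldots,\seg{f_n}$. Since $\widetilde C|E=C$, $B^{d+n}|E=B^d$, and the lifted block segments project onto $\seg{Y_r(\mathbf j)}$, formula \eqref{eq:iterated-projection} turns both sides into exactly $d!/(d+n)!$ times $\E V_d(C[m],B^d[d-m])$ and the corresponding segment terms. Then \eqref{eq:intrinsic-mixed} and \eqref{eq:segments-ball} finish.

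You instead proceed as follows.
\begin{enumerate}[label=(\roman*)]
\item Average over a Haar $m$-subspace $L$ by Kubota's formula.
\item Apply Theorem~\ref{thm:exchangeable} with $k=1$ inside $L\cong\R^m$ to the exchangeable sequence $(\xi_\ell|L)$, whose block sums are the $Y_r(\mathbf j)|L$.
\item Return via $|\det[y_1|L,\ldots,y_m|L]|=\Vol_m(P|L)$ for $P=\seg{y_1}+\cdots+\seg{y_m}$, and use Kubota once more to get $\IV_m(P)=\|y_1\wedge\cdots\wedge y_m\|$.
\end{enumerate}
The Kubota constant cancels, and Tonelli justifies every interchange in $[0,\infty]$. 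This is not circular, because Theorem~\ref{thm:exchangeable} is proved in full generality (Section~\ref{ssec:lifting}) before the corollary.

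Your argument is shorter, makes the general-position step superfluous, and exhibits the intrinsic-volume formula as an average of volume formulas. The price is that it imports Kubota's formula and uses the whole theorem as a black box, whose proof already contains a lift. The paper's argument stays at the level of Proposition~\ref{prop:one-walk} and gives the identity for $\E V_d(C[m],B^d[d-m])$ directly.

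\textbf{Drop the $\varepsilon$-lift paragraph.} Do not keep it as a fallback; it does not close, for two reasons.
\begin{enumerate}[label=(\roman*)]
\item Fatou's lemma only yields $\E\IV_m(C)\le\liminf_{\varepsilon\to0}\E\IV_m(\widetilde C_\varepsilon)$. That is the wrong direction for the case where the right-hand side is infinite.
\item The $\varepsilon=1$ lift need not be integrable even when everything for $C$ is finite. Take $\xi_\ell=R_\ell v$ with i.i.d.\ real $R_\ell$, $\E|R_\ell|=\infty$, and $m=2\le n$. Both sides of \eqref{eq:intrinsic-exchangeable} vanish, yet both lifted sides are $+\infty$ for every $\varepsilon>0$, since already $\E\|\widetilde\xi_1\wedge\widetilde\xi_2\|=\infty$.
\end{enumerate}
Also, the fact that $\IV_m$ does not depend on the ambient dimension does not make a lift exact, because $\IV_m(\widetilde C)\neq\IV_m(C)$. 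The paper's trick is that the mixed volume with $B^{d+n}$ and the segments $\seg{f_s}$ projects exactly.
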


\begin{proof}
If the increments are linearly independent almost surely, it suffices to take $q=d$ and $L_1=\cdots=L_{d-m}=B^d$ in Proposition~\ref{prop:one-walk} and to apply \eqref{eq:intrinsic-mixed} and \eqref{eq:segments-ball}. In general, we combine this choice with the lifting of Section~\ref{ssec:lifting}. Let $q=d+n$, and put $\widetilde\xi_\ell=(\xi_\ell,g_\ell)\in\R^q$, where $g_1,\ldots,g_n$ are independent standard Gaussian vectors in $\R^n$, independent of the increments. The sequence $(\widetilde\xi_1,\ldots,\widetilde\xi_n)$ is exchangeable, and its vectors are linearly independent almost surely. Let $\widetilde C$ be the convex hull of its partial sums, let $E=\R^d\times\{0\}$, and let $f_1,\ldots,f_n$ be the standard basis of $\{0\}\times\R^n$. We apply Proposition~\ref{prop:one-walk} in $\R^q$ to $\widetilde C$, the $q-m$ sets $L$ being $d-m$ copies of the unit ball $B^q$ and the segments $\seg{f_1},\ldots,\seg{f_n}$. We have $\widetilde C|E=C$ and $B^q|E=B^d$, and the segments spanned by the block sums of the lifted sequence project onto the segments $\seg{Y_r(\mathbf j)}$. Therefore, by \eqref{eq:iterated-projection}, both sides of \eqref{eq:one-walk} are equal to $d!/q!$ times the corresponding quantities in $\R^d$, and we obtain
\[
 \E V_d\bigl(C[m],B^d[d-m]\bigr)
 =\sum_{\mathbf j\in\J_m(n)}
 \frac{\E V_d\bigl(\seg{Y_1(\mathbf j)},\ldots,\seg{Y_m(\mathbf j)},B^d[d-m]\bigr)}{j_1\cdots j_m}.
\]
Now \eqref{eq:intrinsic-exchangeable} follows from \eqref{eq:intrinsic-mixed} and \eqref{eq:segments-ball}. For i.i.d.\ increments the block sums $Y_1(\mathbf j),\ldots,Y_m(\mathbf j)$ are independent and $Y_r(\mathbf j)\stackrel d=S_{j_r}$, which gives \eqref{eq:intrinsic-iid}; its right-hand side is finite by Hadamard's inequality.
\end{proof}

For i.i.d.\ increments whose distribution puts no mass on affine hyperplanes, \eqref{eq:intrinsic-iid} is the formula of Vysotsky and Zaporozhets \cite[Corollary~3]{VysotskyZaporozhets}; Corollary~\ref{cor:intrinsic} requires no such assumption. For $m=1$ both \eqref{eq:intrinsic-exchangeable} and \eqref{eq:intrinsic-iid} become
\[
 \E\IV_1(C)=\sum_{j=1}^n\frac{\E\|S_j\|}{j},
\]
the multidimensional version of the Spitzer-Widom formula obtained in \cite{VysotskyZaporozhets}. In the plane $2\IV_1(C)$ is the perimeter of $C$, and we recover the formula of Spitzer and Widom \cite{SpitzerWidom}. For $m=d$, formula \eqref{eq:intrinsic-exchangeable} is the case $k=1$ of Theorem~\ref{thm:exchangeable}.

%======================================================================
\section{Proof of the continuous-time results}\label{sec:continuous-proof}

We follow the approximation scheme of Molchanov and Wespi \cite[proof of Theorem~2.3]{MolchanovWespi}: the processes are sampled on finite grids, the random-walk formula of Corollary~\ref{thm:discrete} is applied to the sampled processes, and one passes to the limit. Since the processes are not assumed to be self-similar, the limits of the resulting lattice sums are identified by means of improper Riemann sums. We begin with three analytic lemmas.

\begin{lemma}[Small-time first moment]\label{lem:small-time}
Let $X$ be a L\'evy process in $\R^d$ with $\E\|X(1)\|<\infty$. For every $T>0$ there is a finite constant $c_T$ such that
\begin{equation}\label{eq:small-time}
 \E\|X(t)\|\le c_T\sqrt t,
 \qquad 0<t\le T.
\end{equation}
Moreover, $X$ is continuous in $L^1$:
\begin{equation}\label{eq:L1-continuity}
 \E\|X(t)-X(s)\|\longrightarrow0
 \qquad\text{as }t\to s.
\end{equation}
\end{lemma}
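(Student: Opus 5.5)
We decompose $X$ through the L\'evy--It\^o decomposition into a drift, a Brownian part, small jumps and large jumps. Fix the cutoff $1$ and write
\[
 X(t)=bt+W(t)+M(t)+J(t),
\]
where $W$ is a centered Gaussian process with covariance matrix $\Sigma$ and $M$ is the compensated martingale of jumps of size at most $1$; these two parts have finite second moments. The remaining part $J$ collects the jumps of size larger than $1$. The condition $\E\|X(1)\|<\infty$ is equivalent to $\int_{\|x\|>1}\|x\|\,\nu(\dd x)<\infty$ for the L\'evy measure $\nu$ \cite[Theorem~25.3]{Sato}. With this, $J$ is a compound Poisson process with integrable jumps.

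For the first three terms we apply Cauchy--Schwarz. This gives
\[
 \E\|W(t)+M(t)\|\le\bigl(\E\|W(t)+M(t)\|^2\bigr)^{1/2}
 =\Bigl(t\bigl(\operatorname{tr}\Sigma+\textstyle\int_{\|x\|\le1}\|x\|^2\nu(\dd x)\bigr)\Bigr)^{1/2}=c\sqrt t,
\]
and the drift satisfies $\|bt\|\le\|b\|\sqrt T\sqrt t$ for $t\le T$. For the large jumps, $\E\|J(t)\|\le\E\sum_{s\le t}\|\Delta X(s)\|\1\{\|\Delta X(s)\|>1\}=t\int_{\|x\|>1}\|x\|\,\nu(\dd x)$ by the compensation formula, and $t\le\sqrt T\sqrt t$. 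Summing the three bounds gives \eqref{eq:small-time} with an explicit $c_T$.

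An alternative, avoiding the decomposition, would use subadditivity. The function $f(t)=\E\|X(t)\|$ satisfies $f(s+t)\le f(s)+f(t)$, so $f(t/n)\ge f(t)/n$. That inequality only bounds $f$ from below, however, and the $\sqrt t$ rate genuinely needs the martingale structure. I would therefore stick to the L\'evy--It\^o route. The only delicate point is justifying the integrability of the large-jump part, which is the Sato moment criterion cited above.

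For \eqref{eq:L1-continuity} we use stationarity of increments: for $t\ge s$, $X(t)-X(s)\stackrel d=X(t-s)$, and similarly for $t<s$. Hence
\[
 \E\|X(t)-X(s)\|=\E\|X(|t-s|)\|\le c_T\sqrt{|t-s|}\to0,
\]
where $T$ bounds $|t-s|$.
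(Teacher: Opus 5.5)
Your proposal is correct and follows essentially the same route as the paper. Both use the L\'evy--It\^o decomposition, $L^2$ bounds of order $\sqrt t$ for the Gaussian and compensated small-jump parts, $O(t)$ bounds for the drift and for the large jumps via $\int_{\|x\|>1}\|x\|\,\nu(\dd x)<\infty$ (Sato's Theorem~25.3), and stationarity of increments for the $L^1$ continuity; the only difference is cosmetic, since you treat $W+M$ in a single Cauchy--Schwarz step using their independence, whereas the paper bounds the two parts separately.
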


\begin{proof}
By the L\'evy-It\^o decomposition \cite[Theorem~19.2]{Sato},
\[
 X(t)=bt+QB(t)+M(t)+J(t),
\]
where $b\in\R^d$, $Q$ is a $d\times d$ matrix, $B$ is a standard Brownian motion in $\R^d$, $M$ is the compensated sum of the jumps of norm at most one, and $J(t)$ is the sum of the jumps of norm greater than one up to time $t$. Let $\nu$ be the L\'evy measure of $X$. The assumption $\E\|X(1)\|<\infty$ is equivalent to $\int_{\|x\|>1}\|x\|\,\nu(\dd x)<\infty$ \cite[Theorem~25.3]{Sato}. Consequently,
\begin{align*}
 \E\|bt\|&=\|b\|\,t,\\
 \E\|QB(t)\|&\le\bigl(\E\|QB(t)\|^2\bigr)^{1/2}=\|Q\|_{\mathrm{HS}}\sqrt t,\\
 \E\|M(t)\|&\le\bigl(\E\|M(t)\|^2\bigr)^{1/2}
 =\left(t\int_{\|x\|\le1}\|x\|^2\nu(\dd x)\right)^{1/2},\\
 \E\|J(t)\|&\le t\int_{\|x\|>1}\|x\|\,\nu(\dd x),
\end{align*}
where $\|Q\|_{\mathrm{HS}}$ is the Hilbert-Schmidt norm of $Q$. On a bounded interval the terms of order $t$ are bounded by constants times $\sqrt t$, which proves \eqref{eq:small-time}. Since $X(t)-X(s)$ has the same distribution as $X(|t-s|)$ or $-X(|t-s|)$, \eqref{eq:small-time} also gives \eqref{eq:L1-continuity}.
\end{proof}

For an array of times $\mathbf t=(t_{r,i})\in(0,\infty)^d$ define
\begin{equation}\label{eq:integrand-F}
 F(\mathbf t)=
 \frac{\E\mathcal D_X(\mathbf t)}
 {\displaystyle\prod_{i=1}^k\prod_{r=1}^{m_i}t_{r,i}}.
\end{equation}

\begin{lemma}[Continuity and domination]\label{lem:integrand}
The function $F$ is continuous on $(0,\infty)^d$. On the set $\prod_i\Delta_{m_i}(T_i)$ it satisfies
\begin{equation}\label{eq:integrand-bound}
 F(\mathbf t)
 \le c\prod_{i=1}^k\prod_{r=1}^{m_i}t_{r,i}^{-1/2}
\end{equation}
for a finite constant $c$ depending only on the processes and the horizons. In particular, $F$ is integrable over $\prod_i\Delta_{m_i}(T_i)$.
\end{lemma}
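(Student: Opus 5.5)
Continuity comes first. The columns of $\mathcal D_X(\mathbf t)$ are the independent vectors $X_i^{(r)}(t_{r,i})$. We compare $\mathcal D_X(\mathbf t)$ with $\mathcal D_X(\mathbf s)$ by changing one column at a time. Because the determinant is multilinear, each difference is at most $|\det|$ of a matrix in which the changed column is replaced by $X_i^{(r)}(t_{r,i})-X_i^{(r)}(s_{r,i})$. Hadamard's inequality bounds this by the product of the column norms. Independence of the columns then gives
\[
 \bigl|\E\mathcal D_X(\mathbf t)-\E\mathcal D_X(\mathbf s)\bigr|
 \le\sum_{(r,i)}\E\bigl\|X_i(t_{r,i})-X_i(s_{r,i})\bigr\|
 \prod_{(r',i')\neq(r,i)}\E\|X_{i'}(u_{r',i'})\|,
\]
where each $u_{r',i'}$ is either $t_{r',i'}$ or $s_{r',i'}$. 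By Lemma~\ref{lem:small-time} the factors $\E\|X_{i'}(u)\|$ are bounded on compact time sets, and the first factor tends to zero by \eqref{eq:L1-continuity}. Hence $\mathbf t\mapsto\E\mathcal D_X(\mathbf t)$ is continuous. The denominator is continuous and positive on $(0,\infty)^d$, so $F$ is continuous.

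For the bound \eqref{eq:integrand-bound}, apply Hadamard's inequality and independence once more:
\[
 \E\mathcal D_X(\mathbf t)\le\prod_{i=1}^k\prod_{r=1}^{m_i}\E\|X_i(t_{r,i})\|.
\]
On $\prod_i\Delta_{m_i}(T_i)$ every coordinate satisfies $0<t_{r,i}\le T_i$. Lemma~\ref{lem:small-time} therefore gives $\E\|X_i(t_{r,i})\|\le c_{T_i}\sqrt{t_{r,i}}$. Dividing by $\prod t_{r,i}$ yields \eqref{eq:integrand-bound} with $c=\prod_i c_{T_i}^{m_i}$.

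Integrability follows from the domination. Each simplex $\Delta_{m_i}(T_i)$ lies in the cube $(0,T_i]^{m_i}$, and $\int_0^{T}t^{-1/2}\,\dd t=2\sqrt T<\infty$. Fubini's theorem then shows that the dominating function is integrable over the product of cubes, and so $F$ is integrable over $\prod_i\Delta_{m_i}(T_i)$.

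The only delicate point is the continuity estimate, because it needs uniform control on the untouched columns. Restricting $\mathbf t$ and $\mathbf s$ to a compact neighbourhood inside $(0,\infty)^d$, and using the bound $c_T\sqrt t$ on a bounded interval, supplies this control. The remaining steps are routine.
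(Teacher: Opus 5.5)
Your proposal is correct and follows essentially the same route as the paper. The domination comes from Hadamard's inequality, the independence of the copies and the small-time bound $\E\|X(t)\|\le c_T\sqrt t$, with integrability by iterated integration over the enclosing cubes. Continuity comes from replacing one column at a time via multilinearity and Hadamard's inequality, together with local boundedness of $t\mapsto\E\|X_i(t)\|$ and $L^1$-continuity of the processes.
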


\begin{proof}
Hadamard's inequality and the independence of all copies imply
\[
 \E\mathcal D_X(\mathbf t)
 \le\prod_{i=1}^k\prod_{r=1}^{m_i}\E\|X_i(t_{r,i})\|,
\]
and Lemma~\ref{lem:small-time} gives \eqref{eq:integrand-bound}. The right-hand side of \eqref{eq:integrand-bound} is integrable, as follows by iterated integration or from the Dirichlet integral \eqref{eq:dirichlet-stable} below.

For the continuity, replace the columns one at a time and use the inequality
\begin{equation}\label{eq:column-replacement}
 \bigl||\det[x_1,\ldots,x_d]|-|\det[y_1,\ldots,y_d]|\bigr|
 \le\sum_{r=1}^d\|x_r-y_r\|
 \prod_{s<r}\|x_s\|\prod_{s>r}\|y_s\|,
\end{equation}
which follows from the multilinearity of the determinant and Hadamard's inequality. Taking expectations and using independence and the local boundedness of $t\mapsto\E\|X_i(t)\|$, we see from \eqref{eq:L1-continuity} that the numerator in \eqref{eq:integrand-F} is continuous, and hence so is $F$.
\end{proof}

\begin{lemma}[Improper Riemann sums]\label{lem:riemann}
Let $n\to\infty$ and put $h_i=T_i/n$. Then
\begin{align}\label{eq:riemann-sum}
 &\left(\prod_{i=1}^k h_i^{m_i}\right)
 \sum_{\substack{\mathbf j_i\in\J_{m_i}(n)\\i=1,\ldots,k}}
 F\bigl((j_{r,i}h_i)_{r,i}\bigr)
 \longrightarrow
 \int_{\prod_{i=1}^k\Delta_{m_i}(T_i)}F(\mathbf t)\,\dd\mathbf t.
\end{align}
\end{lemma}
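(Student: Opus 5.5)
We read the left-hand side of \eqref{eq:riemann-sum} as a Riemann-type sum for $F$ and prove convergence by truncating away from the coordinate hyperplanes, using the continuity and domination of Lemma~\ref{lem:integrand}. For each grid point $\mathbf j=(j_{r,i})$ let $Q(\mathbf j)=\prod_{i,r}((j_{r,i}-1)h_i,\,j_{r,i}h_i]$ be the grid cell whose upper-right corner is $(j_{r,i}h_i)$. If $\mathbf j_i\in\J_{m_i}(n)$ for all $i$, then $Q(\mathbf j)\subset\prod_i\Delta_{m_i}(T_i)$, because the upper corner lies in the product of simplices. Conversely, every point of $\prod_i\Delta_{m_i}(T_i)$ whose coordinates satisfy $\sum_r t_{r,i}\le T_i-m_ih_i$ lies in such a cell. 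So the sum equals $\int G_n(\mathbf t)\,\dd\mathbf t$, where $G_n$ is the step function equal to $F$ at the upper corner of each admissible cell and zero elsewhere. The plan is to show $G_n\to F$ pointwise and to dominate $G_n$ by an integrable function independent of $n$.

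Pointwise convergence on the interior of $\prod_i\Delta_{m_i}(T_i)$ is clear. Fix such a $\mathbf t$ with all coordinates positive and strict inequalities $\sum_r t_{r,i}<T_i$. For large $n$ it lies in an admissible cell, and the upper corner of that cell converges to $\mathbf t$, so $G_n(\mathbf t)\to F(\mathbf t)$ by the continuity in Lemma~\ref{lem:integrand}. The boundary has measure zero.

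The main point is domination near the coordinate hyperplanes, where $F$ is singular. By \eqref{eq:integrand-bound}, at the corner $u_{r,i}=j_{r,i}h_i$ we have $F(\mathbf u)\le c\prod u_{r,i}^{-1/2}$. On the cell, $t_{r,i}\le u_{r,i}$, hence $u_{r,i}^{-1/2}\le t_{r,i}^{-1/2}$. Thus $G_n(\mathbf t)\le c\prod_{i,r}t_{r,i}^{-1/2}$ for all $n$. This holds because the corner is chosen as the upper one; if the corner were a lower corner, the bound would fail at $j=1$. The right-hand side is integrable on $\prod_i\Delta_{m_i}(T_i)$, as noted in Lemma~\ref{lem:integrand}.

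We must also check that the constant $c$ in \eqref{eq:integrand-bound} applies at every corner. Each corner lies in $\prod_i\Delta_{m_i}(T_i)$, and the bound there rests only on $\E\|X_i(s)\|\le c_{T_i}\sqrt s$ for $s\le T_i$, which holds for every coordinate $u_{r,i}\le T_i$. Dominated convergence then gives \eqref{eq:riemann-sum}. The only delicate step is this choice of corner, which makes the uniform integrable majorant work across the singularity at the origin.
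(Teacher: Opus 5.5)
Your proof is correct. It reaches the result by a different route from the paper.

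The two proofs differ in how they handle the singularity of $F$ near the coordinate hyperplanes. The paper truncates at a level $\delta>0$. On the compact part where all coordinates are at least $\delta$, it uses the ordinary convergence of Riemann sums for the continuous function $F$. It bounds the remaining lattice sum separately, using the one-dimensional estimate $h\sum_{1\le j\le\delta/h}(jh)^{-1/2}\le2\sqrt\delta$ and a union bound over the $d$ coordinates. It then lets $n\to\infty$ first and $\delta\downarrow0$ afterwards.

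You instead write the lattice sum as $\int G_n$ for a step function $G_n$ supported in $\prod_i\Delta_{m_i}(T_i)$. Two facts give you a majorant that does not depend on $n$: $F$ is evaluated at upper corners, and $\prod t_{r,i}^{-1/2}$ decreases in each coordinate. Together they yield $G_n\le c\prod_{i,r}t_{r,i}^{-1/2}$ for every $n$, and one application of dominated convergence finishes the proof.

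What your route buys:
\begin{itemize}
\item It avoids the two-step limit.
\item It avoids matching lattice cells with the truncated region.
\item It needs only pointwise convergence almost everywhere, which follows at once from the continuity of $F$ on $(0,\infty)^d$.
\item It shows that the paper's lattice estimate is the comparison $\sum_j h(jh)^{-1/2}\le\int t^{-1/2}\dd t$ in another form.
\end{itemize}

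What the paper's route buys: it bounds the contribution of lattice points near the coordinate hyperplanes explicitly, as $O(\sqrt\delta)$ uniformly in $n$. That is the form you would need to extract a rate of convergence.

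Both arguments depend in the same essential way on the upper-corner evaluation. You use it for pointwise domination; the paper uses it to keep every evaluation point at positive coordinates in the union bound.
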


\begin{proof}
The left-hand side of \eqref{eq:riemann-sum} is a Riemann sum of $F$ over the lattice cells $\prod_{i,r}\bigl((j_{r,i}-1)h_i,\,j_{r,i}h_i\bigr]$, with $F$ evaluated at the upper corners of the cells; the condition $\mathbf j_i\in\J_{m_i}(n)$ means precisely that these corners lie in $\prod_i\Delta_{m_i}(T_i)$. Fix $\delta>0$. On the compact part of the domain where all coordinates are at least $\delta$, the function $F$ is continuous by Lemma~\ref{lem:integrand}, and the usual convergence of Riemann sums applies. It remains to control the region where at least one coordinate is less than $\delta$. The integral over this region tends to zero as $\delta\downarrow0$ by \eqref{eq:integrand-bound} and dominated convergence. For the lattice sums we use that, uniformly in $h>0$,
\[
 h\sum_{1\le j\le\delta/h}(jh)^{-1/2}\le 2\sqrt\delta,
 \qquad
 h\sum_{1\le j\le T/h}(jh)^{-1/2}\le 2\sqrt T.
\]
Since the evaluation points are the upper corners, at which every coordinate is positive, a union bound over the $d$ coordinates and \eqref{eq:integrand-bound} show that the contribution of the lattice points with some coordinate less than $\delta$ is $O(\sqrt\delta)$ uniformly in $n$. Letting first $n\to\infty$ and then $\delta\downarrow0$, we obtain \eqref{eq:riemann-sum}.
\end{proof}

We also need the pathwise convergence of the sampled hulls.

\begin{lemma}[Dyadic hulls]\label{lem:dyadic}
Let $X$ be a c\`adl\`ag process in $\R^d$ and fix $T>0$. For $\ell\ge0$ let
\[
 D_\ell(T)=\{jT2^{-\ell}:j=0,1,\ldots,2^\ell\},
 \qquad
 Z^{(\ell)}(T)=\conv\{X(t):t\in D_\ell(T)\}.
\]
Then $Z^{(\ell)}(T)\uparrow Z(T)=\clconv\{X(t):0\le t\le T\}$ as $\ell\to\infty$, and the convergence holds in the Hausdorff metric.
\end{lemma}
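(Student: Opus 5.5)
Monotonicity is immediate. Since $D_\ell(T)\subset D_{\ell+1}(T)$, the polytopes $Z^{(\ell)}(T)$ increase in $\ell$, and each of them is contained in $Z(T)$. All these sets are compact: the path of a c\`adl\`ag function on $[0,T]$ is bounded, so $Z(T)$ is a compact convex set, as the closed convex hull of a bounded set. Put
\[
 W=\operatorname{cl}\bigcup_\ell Z^{(\ell)}(T).
\]
Then $W$ is a compact convex set contained in $Z(T)$, because the closure of an increasing union of convex sets is convex. For an increasing sequence of compact convex sets inside a fixed compact set, convergence in the Hausdorff metric to the closure of the union is standard. One can see it through support functions: $h(Z^{(\ell)}(T),u)$ increases to $h(W,u)$ for every $u$, the functions $h(\cdot,u)$ are uniformly Lipschitz on the sphere, and Dini's theorem upgrades pointwise convergence to uniform convergence. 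It therefore suffices to prove $W=Z(T)$.

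Next I would show that $X(t)\in W$ for every $t\in[0,T]$; this is the only place where the c\`adl\`ag property enters.
\begin{itemize}
\item If $t=T$, then $t$ is a grid point at every level, so $X(T)\in W$ directly.
\item If $t<T$, then for each $\ell$ let $t_\ell$ be the smallest point of $D_\ell(T)$ with $t_\ell\ge t$. Then $t_\ell\downarrow t$ and $t_\ell-t\le T2^{-\ell}$.
\item If $t_\ell=t$ for some $\ell$, then $X(t)\in Z^{(\ell)}(T)$ and we are done.
\item Otherwise $t_\ell>t$ for all $\ell$. Right-continuity gives $X(t_\ell)\to X(t)$, and since $X(t_\ell)\in Z^{(\ell)}(T)\subset W$ and $W$ is closed, $X(t)\in W$.
\end{itemize}
Left limits are not needed; only right-continuity and boundedness are used.

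Consequently $W$ is a closed convex set containing $\{X(t):0\le t\le T\}$, so $Z(T)\subset W$. Together with $W\subset Z(T)$ this gives $W=Z(T)$. Hence $Z^{(\ell)}(T)\uparrow Z(T)$ in the sense that the closure of the increasing union is $Z(T)$, and the convergence holds in the Hausdorff metric.

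I expect no real obstacle. The only mild care needed is to approximate from the right and to handle the endpoint $T$ separately. The alternative, approximating $t$ from the left by grid points, would give $X(t-)$ rather than $X(t)$ and would miss jump values.
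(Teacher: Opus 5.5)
Your proof is correct and follows the paper's argument essentially step by step. Both use boundedness of the c\`adl\`ag path, right-continuity along grid points $t_\ell\downarrow t$ with the endpoint $T$ in every grid, and the identification of the closure of the increasing union with $Z(T)$. Your support-function and Dini argument only spells out the final step, that increasing compact sets with dense union converge in the Hausdorff metric, which the paper states as a standard fact.
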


\begin{proof}
A c\`adl\`ag path on $[0,T]$ is bounded. For every $t<T$ there are dyadic points $t_\ell\downarrow t$, and right continuity gives $X(t_\ell)\to X(t)$; the endpoint $T$ belongs to every grid. Hence the closure of the union of the increasing sets $Z^{(\ell)}(T)$ is $Z(T)$. An increasing sequence of compact sets whose union is dense in a compact set converges to it in the Hausdorff metric.
\end{proof}

\begin{proof}[Proof of Theorem~\ref{thm:continuous}]
Fix a dyadic level $\ell$, and put $n=2^\ell$ and $h_i=T_i/n$. The sampled process
\[
 S^{(\ell)}_{j,i}=X_i(jh_i),\qquad j=0,\ldots,n,
\]
is a random walk with i.i.d.\ integrable increments, the walks corresponding to different $i$ are independent, and the convex hull of the $i$th walk is
\[
 C_i^{(\ell)}=\conv\{X_i(jh_i):j=0,\ldots,n\}.
\]
By Corollary~\ref{thm:discrete},
\begin{align*}
 \E V_d\bigl(C_1^{(\ell)}[m_1],\ldots,C_k^{(\ell)}[m_k]\bigr)
 &=\frac1{d!}
 \sum_{\substack{\mathbf j_i\in\J_{m_i}(n)\\i=1,\ldots,k}}
 \frac{\E\mathcal D_X\bigl((j_{r,i}h_i)_{r,i}\bigr)}
 {\displaystyle\prod_{i=1}^k\prod_{r=1}^{m_i}j_{r,i}}\\
 &=\frac1{d!}\left(\prod_{i=1}^k h_i^{m_i}\right)
 \sum_{\substack{\mathbf j_i\in\J_{m_i}(n)\\i=1,\ldots,k}}
 F\bigl((j_{r,i}h_i)_{r,i}\bigr).
\end{align*}
By Lemma~\ref{lem:riemann}, the right-hand side converges to the right-hand side of \eqref{eq:continuous-main} as $\ell\to\infty$.

On the other hand, $C_i^{(\ell)}\uparrow Z_i(T_i)$ in the Hausdorff metric almost surely by Lemma~\ref{lem:dyadic}. Mixed volumes are monotone under inclusion and continuous \cite[Section~5.1]{Schneider}, so that
\[
 V_d\bigl(C_1^{(\ell)}[m_1],\ldots,C_k^{(\ell)}[m_k]\bigr)
 \uparrow
 V_d\bigl(Z_1(T_1)[m_1],\ldots,Z_k(T_k)[m_k]\bigr)
\]
almost surely. The monotone convergence theorem completes the proof of \eqref{eq:continuous-main}. The integral is finite by Lemma~\ref{lem:integrand}.
\end{proof}

\begin{proof}[Proof of Corollary~\ref{cor:exponential}]
Condition on $(\tau_1,\ldots,\tau_k)$ and insert \eqref{eq:continuous-main}. The integrand is non-negative, so Tonelli's theorem applies, and for every $i$
\[
 \int_{t_{1,i}+\cdots+t_{m_i,i}}^\infty q_i e^{-q_iT}\,\dd T
 =e^{-q_i(t_{1,i}+\cdots+t_{m_i,i})}.
\]
Multiplying these factors gives \eqref{eq:exponential-main}. To see that the right-hand side is finite, note that the function $t\mapsto\E\|X_i(t)\|$ is subadditive by the independence and stationarity of increments. Together with \eqref{eq:small-time} for $T=1$ this gives $\E\|X_i(t)\|\le c\,(\sqrt t+t)$ for all $t>0$ and some finite constant $c$. By Hadamard's inequality, $F(\mathbf t)\le c^d\prod_{i,r}\bigl(t_{r,i}^{-1/2}+1\bigr)$, which is integrable against the exponential weight in \eqref{eq:exponential-main}.
\end{proof}

\begin{proof}[Proof of Corollary~\ref{cor:stable} and Examples~\ref{ex:brownian} and~\ref{ex:isotropic}]
Self-similarity gives $X_i(t)\stackrel d=t^{1/\alpha_i}X_i(1)$. Hence, by \eqref{eq:random-segments} and the independence of the columns,
\[
 \E\mathcal D_X(\mathbf t)
 =d!\,V_d\bigl(H_1[m_1],\ldots,H_k[m_k]\bigr)
 \prod_{i=1}^k\prod_{r=1}^{m_i}t_{r,i}^{1/\alpha_i},
 \qquad H_i=\E\seg{X_i(1)}.
\]
Therefore the integral in Theorem~\ref{thm:continuous} separates into $k$ Dirichlet integrals
\begin{equation}\label{eq:dirichlet-stable}
 \int_{\Delta_m(T)}\prod_{r=1}^m t_r^{1/\alpha-1}\,\dd t_1\cdots\dd t_m
 =T^{m/\alpha}\frac{\Gamma(1/\alpha)^m}{\Gamma(1+m/\alpha)}.
\end{equation}
Finally, \eqref{eq:stable-selection} and the homogeneity of mixed volumes give
\[
 V_d\bigl(H_1[m_1],\ldots,H_k[m_k]\bigr)
 =\pi^{-d}\prod_{i=1}^k\Gamma(1-1/\alpha_i)^{m_i}\;V_d\bigl(K_1[m_1],\ldots,K_k[m_k]\bigr),
\]
which yields \eqref{eq:stable-main}. Example~\ref{ex:brownian} follows from $\Gamma(1/2)=\sqrt\pi$ and $K_i=2^{-1/2}B^d$, and Example~\ref{ex:isotropic} follows from $K_i=c_i^{1/\alpha_i}B^d$ and $V_d(B^d[d])=\kappa_d$.
\end{proof}

%======================================================================
\section{Random projections of orthoschemes}\label{sec:orthoschemes}

For $K\in\mathcal K^n$ we denote by $\IV_d(K)$ its $d$th intrinsic volume; see Section~\ref{ssec:intrinsic}. Let $A$ be a random $d\times n$ matrix with independent standard Gaussian entries; the set $AK\subset\R^d$ is sometimes called the $d$-dimensional Gaussian spectrum of $K$. By the theorem of Sudakov and Tsirelson \cite{Sudakov,Tsirelson}, in the finite-dimensional form given in \cite[Section~4.1]{VysotskyZaporozhets},
\begin{equation}\label{eq:tsirelson}
 \IV_d(K)=\frac{(2\pi)^{d/2}}{d!\,\kappa_d}\,
 \E\Vol_d(AK),
\end{equation}
while Kubota's formula \cite{SchneiderWeil} gives
\begin{equation}\label{eq:kubota}
 \IV_d(K)=\binom nd\frac{\kappa_n}{\kappa_d\kappa_{n-d}}
 \E_L\Vol_d(K|L),
\end{equation}
where $L$ is a Haar-distributed random $d$-dimensional linear subspace of $\R^n$.

\begin{proof}[Proof of Theorem~\ref{thm:orthoscheme}]
Apply \eqref{eq:tsirelson} and \eqref{eq:kubota} to
\[
 W=\lambda_1\Omega_1+\cdots+\lambda_k\Omega_k,\qquad\lambda_1,\ldots,\lambda_k\ge0.
\]
Since both $A$ and the projection onto $L$ are linear,
\[
 AW=\lambda_1A\Omega_1+\cdots+\lambda_kA\Omega_k,
 \qquad
 W|L=\lambda_1(\Omega_1|L)+\cdots+\lambda_k(\Omega_k|L).
\]
Expanding $\Vol_d(AW)$ and $\Vol_d(W|L)$ by \eqref{eq:minkowski-polynomial} and comparing the coefficients of $\lambda_1^{m_1}\cdots\lambda_k^{m_k}$, we find
\begin{align}\label{eq:spectrum-projection}
 &\E_L V_d\bigl((\Omega_1|L)[m_1],\ldots,(\Omega_k|L)[m_k]\bigr)\\
 &\quad=\frac{(2\pi)^{d/2}(n-d)!\,\kappa_{n-d}}{n!\,\kappa_n}
 \E V_d\bigl((A\Omega_1)[m_1],\ldots,(A\Omega_k)[m_k]\bigr).\notag
\end{align}

The columns of $A$ are independent standard Gaussian vectors in $\R^d$. Consequently, $A\Omega_i$ is the convex hull of a standard Gaussian random walk in $\R^d$ with $n_i$ steps, and these $k$ hulls are independent, because the columns belonging to different blocks are independent. Hence Corollary~\ref{thm:discrete} applies. Since $S_{j,i}\stackrel d=\sqrt j\,G$, where $G$ is a standard Gaussian vector in $\R^d$,
\begin{equation}\label{eq:gaussian-det}
 \E\absdet{S^{(1,1)}_{j_{1,1}},\ldots,S^{(m_k,k)}_{j_{m_k,k}}}
 =\sqrt{\prod_{i=1}^k\prod_{r=1}^{m_i}j_{r,i}}\;
 \E\absdet{G_1,\ldots,G_d}
\end{equation}
with independent standard Gaussian vectors $G_1,\ldots,G_d$. The classical evaluation
\begin{equation}\label{eq:gaussian-det-constant}
 \E\absdet{G_1,\ldots,G_d}=\frac{d!\,\kappa_d}{(2\pi)^{d/2}},
\end{equation}
see, for example, \cite{KabluchkoZaporozhetsDet}, together with \eqref{eq:gaussian-det}, Corollary~\ref{thm:discrete} and \eqref{eq:spectrum-projection}, gives \eqref{eq:orthoscheme-main}.
\end{proof}

\begin{remark}\label{rem:d-equals-n}
For $d=n$ the only non-zero mixed volume in Theorem~\ref{thm:orthoscheme} has $m_i=n_i$ for every $i$. The sum then contains only the choice $j_{r,i}=1$, and \eqref{eq:orthoscheme-main} reduces to
\[
 V_n\bigl(\Omega_1[n_1],\ldots,\Omega_k[n_k]\bigr)=\frac1{n!}.
\]
This is also immediate from the geometry of the mutually orthogonal blocks, since $\Vol_n(\lambda_1\Omega_1+\cdots+\lambda_k\Omega_k)=\prod_i\lambda_i^{n_i}/n_i!$.
\end{remark}

\begin{example}\label{ex:d-equals-n-1}
Let $d=n-1$. The non-zero mixed volumes in Theorem~\ref{thm:orthoscheme} are those with $m_{i_0}=n_{i_0}-1$ for one index $i_0$ and $m_i=n_i$ for $i\neq i_0$; take, for definiteness, $i_0=1$. In the sum in \eqref{eq:orthoscheme-main}, all $j_{r,i}$ with $i\ge2$ are equal to one. The $n_1-1$ indices $j_{r,1}$ are either all equal to one, or one of them equals two and the others are equal to one. Since $\kappa_1=2$, we obtain
\[
 \E_L V_{n-1}\bigl((\Omega_1|L)[n_1-1],(\Omega_2|L)[n_2],\ldots,(\Omega_k|L)[n_k]\bigr)
 =\frac{2\kappa_{n-1}}{n!\,\kappa_n}\left(1+\frac{n_1-1}{\sqrt2}\right).
\]
Combining these mixed volumes by means of \eqref{eq:minkowski-polynomial}, we get
\[
 \E_L\Vol_{n-1}\bigl((\Omega_1+\cdots+\Omega_k)|L\bigr)
 =\frac{2\kappa_{n-1}}{n\,\kappa_n\,n_1!\cdots n_k!}
 \left(\frac{\sqrt2-1}{\sqrt2}\,n+\frac1{\sqrt2}\sum_{i=1}^kn_i^2\right).
\]
\end{example}

%======================================================================
\section*{Use of artificial intelligence tools}

In accordance with the policy of the American Mathematical Society on the use of artificial intelligence, we disclose the following. During the preparation of this manuscript the authors used Claude (Anthropic), a large language model, solely as an editing tool: to improve the language and the presentation, to check the consistency of the notation, the cross-references and the computations, and to complete the bibliography. The tool was not used to obtain any of the results. All results, ideas and proofs in this paper are the authors' own.

In particular, the discrete part of the paper was developed in its entirety during the preparation of the first author's bachelor's thesis \cite{BolotinThesis}. This part consists of the results on convex hulls of random walks in Sections~\ref{ssec:discrete-results} and~\ref{ssec:orthoscheme-results} and in Sections~\ref{sec:braid}, \ref{sec:discrete-proof} and~\ref{sec:orthoschemes}, together with their proofs. The thesis was written under the supervision of the second author at the Moscow Institute of Physics and Technology and defended in June 2025; no artificial intelligence tools were used in that work.

The authors have reviewed and edited all text prepared with the help of the tool and take full responsibility for the content of this paper.

%======================================================================


\begin{thebibliography}{99}

\bibitem{Aigner}
M.~Aigner,
\emph{Combinatorial theory},
Grundlehren der Mathematischen Wissenschaften, vol.~234, Springer, Berlin, 1979.

\bibitem{BarndorffBaxter}
O.~E. Barndorff-Nielsen and G.~Baxter,
\emph{Combinatorial lemmas in higher dimensions},
Trans. Amer. Math. Soc. \textbf{108} (1963), 313-325.

\bibitem{Baxter}
G.~Baxter,
\emph{A combinatorial lemma for complex numbers},
Ann. Math. Statist. \textbf{32} (1961), 901-904.

\bibitem{BolotinThesis}
A.~S. Bolotin,
\emph{Geometric characteristics of convex hulls of random processes},
Bachelor's thesis, Moscow Institute of Physics and Technology, Dolgoprudny, 2025 (in Russian).

\bibitem{Dospolova}
M.~K. Dospolova,
\emph{Mixed volume of infinite-dimensional convex compact sets},
Zap. Nauchn. Sem. POMI \textbf{510} (2022), 98-123 (in Russian).

\bibitem{ElBachir}
M.~El~Bachir,
\emph{L'enveloppe convexe du mouvement brownien},
Th\`ese de troisi\`eme cycle, Universit\'e Paul Sabatier, Toulouse, 1983.

\bibitem{Eldan}
R.~Eldan,
\emph{Volumetric properties of the convex hull of an $n$-dimensional Brownian motion},
Electron. J. Probab. \textbf{19} (2014), no.~45, 34~pp.

\bibitem{Freudenthal}
H.~Freudenthal,
\emph{Simplizialzerlegungen von beschr\"ankter Flachheit},
Ann. of Math. (2) \textbf{43} (1942), 580-582.

\bibitem{GaoVitale}
F.~Gao and R.~A. Vitale,
\emph{Intrinsic volumes of the Brownian motion body},
Discrete Comput. Geom. \textbf{26} (2001), 41-50.

\bibitem{GrahamKnuthPatashnik}
R.~L. Graham, D.~E. Knuth, and O.~Patashnik,
\emph{Concrete mathematics},
2nd ed., Addison-Wesley, Reading, MA, 1994.

\bibitem{Groemer}
H.~Groemer,
\emph{On the extension of additive functionals on classes of convex sets},
Pacific J. Math. \textbf{75} (1978), 397-410.

\bibitem{KVZ-GAFA}
Z.~Kabluchko, V.~Vysotsky, and D.~Zaporozhets,
\emph{Convex hulls of random walks, hyperplane arrangements, and Weyl chambers},
Geom. Funct. Anal. \textbf{27} (2017), 880-918.

\bibitem{KVZ-AdvMath}
Z.~Kabluchko, V.~Vysotsky, and D.~Zaporozhets,
\emph{Convex hulls of random walks: expected number of faces and face probabilities},
Adv. Math. \textbf{320} (2017), 595-629.

\bibitem{KabluchkoZaporozhetsDet}
Z.~Kabluchko and D.~Zaporozhets,
\emph{Random determinants, mixed volumes of ellipsoids, and zeros of Gaussian random fields},
Zap. Nauchn. Sem. POMI \textbf{408} (2012), 187-196; English transl., J. Math. Sci. (N.Y.) \textbf{199} (2014), 168-173.

\bibitem{KabluchkoZaporozhets}
Z.~Kabluchko and D.~Zaporozhets,
\emph{Intrinsic volumes of Sobolev balls with applications to Brownian convex hulls},
Trans. Amer. Math. Soc. \textbf{368} (2016), 8873-8899.

\bibitem{KampfLastMolchanov}
J.~Kampf, G.~Last, and I.~Molchanov,
\emph{On the convex hull of symmetric stable processes},
Proc. Amer. Math. Soc. \textbf{140} (2012), 2527-2535.

\bibitem{KlainRota}
D.~A. Klain and G.-C. Rota,
\emph{Introduction to geometric probability},
Cambridge University Press, Cambridge, 1997.

\bibitem{Kuhn}
H.~W. Kuhn,
\emph{Some combinatorial lemmas in topology},
IBM J. Res. Develop. \textbf{4} (1960), 518-524.

\bibitem{MajumdarComtetRandonFurling}
S.~N. Majumdar, A.~Comtet, and J.~Randon-Furling,
\emph{Random convex hulls and extreme value statistics},
J. Stat. Phys. \textbf{138} (2010), 955-1009.

\bibitem{MolchanovRandomSets}
I.~Molchanov,
\emph{Theory of random sets},
Springer, London, 2005.

\bibitem{MolchanovStable}
I.~Molchanov,
\emph{Convex and star-shaped sets associated with multivariate stable distributions. I. Moments and densities},
J. Multivariate Anal. \textbf{100} (2009), 2195-2213.

\bibitem{MolchanovWespi}
I.~Molchanov and F.~Wespi,
\emph{Convex hulls of L\'evy processes},
Electron. Commun. Probab. \textbf{21} (2016), paper no.~69, 11~pp.

\bibitem{PitmanUribeBravo}
J.~Pitman and G.~Uribe Bravo,
\emph{The convex minorant of a L\'evy process},
Ann. Probab. \textbf{40} (2012), 1636-1674.

\bibitem{RandonFurlingZaporozhets}
J.~Randon-Furling and D.~Zaporozhets,
\emph{Convex hulls of several multidimensional Gaussian random walks},
Zap. Nauchn. Sem. POMI \textbf{505} (2021), 244-281.

\bibitem{Sato}
K.~Sato,
\emph{L\'evy processes and infinitely divisible distributions},
Cambridge University Press, Cambridge, 1999.

\bibitem{Schneider}
R.~Schneider,
\emph{Convex bodies: the Brunn-Minkowski theory},
2nd expanded ed., Cambridge University Press, Cambridge, 2014.

\bibitem{SchneiderWeil}
R.~Schneider and W.~Weil,
\emph{Stochastic and integral geometry},
Springer, Berlin, 2008.

\bibitem{Spitzer}
F.~Spitzer,
\emph{A combinatorial lemma and its application to probability theory},
Trans. Amer. Math. Soc. \textbf{82} (1956), 323-339.

\bibitem{SpitzerWidom}
F.~Spitzer and H.~Widom,
\emph{The circumference of a convex polygon},
Proc. Amer. Math. Soc. \textbf{12} (1961), 506-509.

\bibitem{Stanley}
R.~P. Stanley,
\emph{Enumerative combinatorics. Vol. 1},
2nd ed., Cambridge University Press, Cambridge, 2012.

\bibitem{Steele}
J.~M. Steele,
\emph{The Bohnenblust-Spitzer algorithm and its applications},
J. Comput. Appl. Math. \textbf{142} (2002), 235-249.

\bibitem{Sudakov}
V.~N. Sudakov,
\emph{Geometric problems in the theory of infinite-dimensional probability distributions},
Trudy Mat. Inst. Steklov. \textbf{141} (1976), 3-191 (in Russian).

\bibitem{Tsirelson}
B.~S. Tsirelson,
\emph{A geometric approach to maximum likelihood estimation for an infinite-dimensional Gaussian location. II},
Theory Probab. Appl. \textbf{30} (1985), 820-828.

\bibitem{Vitale}
R.~A. Vitale,
\emph{Expected absolute random determinants and zonoids},
Ann. Appl. Probab. \textbf{1} (1991), 293-300.

\bibitem{Volland}
W.~Volland,
\emph{Ein Fortsetzungssatz f\"ur additive Eipolyederfunktionale im euklidischen Raum},
Arch. Math. \textbf{8} (1957), 144-149.

\bibitem{VysotskyZaporozhets}
V.~Vysotsky and D.~Zaporozhets,
\emph{Convex hulls of multidimensional random walks},
Trans. Amer. Math. Soc. \textbf{370} (2018), 7985-8012.

\end{thebibliography}
\end{document}